\DeclareMathOperator{\Tr}{Tr}
\DeclareMathOperator{\val}{val}
\DeclareMathOperator{\wt}{wt}
\newcommand{\C}{{\mathbb C}}
\newcommand{\F}{{\mathbb F}}
\newcommand{\Q}{{\mathbb Q}}
\newcommand{\Z}{{\mathbb Z}}
\newcommand{\Fp}{{\mathbb \F_p}}
\newcommand{\card}[1]{\left|{#1}\right|}
\newcommand{\mins}[1]{\min_{\substack{#1}}}
\newcommand{\sums}[1]{\sum_{\substack{#1}}}
\newcommand{\ceil}[1]{\lceil{#1}\rceil}
\newcommand{\grmul}[1]{{#1}^\times}
\newcommand{\mchars}[1]{{\widehat{\grmul{#1}}}}
\newcommand{\valp}{\val_p}
\newcommand{\mywalsh}[2]{S_{{#1},{#2}}}
\newtheorem{theorem}{Theorem}[section]
\newtheorem{proposition}[theorem]{Proposition}
\newtheorem{lemma}[theorem]{Lemma}
\newtheorem{corollary}[theorem]{Corollary}
\newtheorem{conjecture}[theorem]{Conjecture}
\theoremstyle{remark}
\newtheorem{remark}[theorem]{Remark}
\title[The $p$-Adic Valuations of Weil Sums of Binomials]{The $p$-Adic Valuations \\ of Weil Sums of Binomials}
\author[Katz]{Daniel J. Katz}
\address{Department of Mathematics, California State University, Northridge, \: United States}\thanks{The work of Katz, Lee, and Sapozhnikov on this paper was supported in part by the National Science Foundation under Grant DMS 1500856.}
\author[Langevin]{Philippe Langevin}
\address{Institut de Math\'ematiques de Toulon, Universit\'e de Toulon, France}
\author[Lee]{Sangman Lee}
\address{Department of Mathematics, California State University, Northridge, \: United States}
\author[Sapozhnikov]{Yakov Sapozhnikov}
\address{Department of Mathematics, California State University, Northridge, \: United States}
\date{first version: 13 August 2016; this version: 20 March 2017.}
\begin{document}
\begin{abstract}
We investigate the $p$-adic valuation of Weil sums of the form $W_{F,d}(a)=\sum_{x \in F} \psi(x^d -a x)$, where $F$ is a finite field of characteristic $p$, $\psi$ is the canonical additive character of $F$, the exponent $d$ is relatively prime to $|F^\times|$, and $a$ is an element of $F$.
Such sums often arise in arithmetical calculations and also have applications in information theory.
For each $F$ and $d$ one would like to know $V_{F,d}$, the minimum $p$-adic valuation of $W_{F,d}(a)$ as $a$ runs through the elements of $F$.
We exclude exponents $d$ that are congruent to a power of $p$ modulo $|F^\times|$ (degenerate $d$), which yield trivial Weil sums.
We prove that $V_{F,d} \leq (2/3)[F\colon{\mathbb F}_p]$ for any $F$ and any nondegenerate $d$, and prove that this bound is actually reached in infinitely many fields $F$.
We also prove some stronger bounds that apply when $[F\colon{\mathbb F}_p]$ is a power of $2$ or when $d$ is not congruent to $1$ modulo $p-1$, and show that each of these bounds is reached for infinitely many $F$.
\end{abstract}
\maketitle
\section{Introduction}
We consider Weil sums of binomials of the form
\begin{equation}\label{Alice}
W_{F,d}(a) = \sum_{x \in F} \psi(x^d-a x),
\end{equation}
where $F$ is a finite field, the exponent $d$ is a positive integer such that $\gcd(d,q-1)=1$, the coefficient $a$ is in $F$, and $\psi\colon F \to \C$ is the canonical additive character of $F$.
Throughout this paper, $F$ is always a finite field of characteristic $p$ and order $q=p^n$.
Then $\psi(x)=e^{2\pi i \Tr(x) /p}$, where the absolute trace  $\Tr \colon F\to\F_p$ is given by $\Tr(x)=x+x^p+\cdots+x^{p^{n-1}}$.
The condition on $d$ makes $x\mapsto x^d$ a permutation of $F$, which means that $W_{F,d}(0)=0$.
Every character sum of the more general form
\[
\sum_{x \in F} \psi(b x^s + c x^t)
\]
with $b \in \grmul{F}$, $c \in F$, and $\gcd(s,q-1)=\gcd(t,q-1)=1$ is equal to $W_{F,s/t}(-c b^{-t/s})$ via the reparameterization $y=b^{t/s} x^t$, where division signifies inversion modulo $q-1$.
These sums and their close relatives arise often in number theory \cite{Carlitz-1978, Carlitz-1979, Cochrane-Pinner-2003, Cochrane-Pinner-2011,  Coulter, Davenport-Heilbronn, Karatsuba, Katz-Livne, Kloosterman, Lachaud-Wolfmann, Vinogradow}.  For example, the Kloosterman sum $\sum_{x \in \grmul{F}} \psi(x^{-1}+a x)$ is $-1+W_{F,\card{F}-2}(-a)$.  Our Weil sums in \eqref{Alice} are also of practical interest, as they determine the performance of protocols in communications theory, remote sensing, cryptography, and coding theory.  See the Appendix of \cite{Katz-2012} for how these sums relate to correlation of sequences and nonlinearity of boolean functions.

We are interested in the $p$-adic valuation of $W_{F,d}(a)$.  We extend the $p$-adic valuation $\valp$ from $\Q$ to $\Q(e^{2\pi i/p})$ so that $\valp(1-e^{2\pi i/p})=1/(p-1)$.  Since $W_{F,d}(a)$ always lies in $\Q(e^{2\pi i/p})$, we see that its valuation must be an integer multiple of $1/(p-1)$.
Bounds on the $p$-adic valuation of $W_{F,d}(a)$ have proved very helpful in determining the values of $W_{F,d}(a)$, as can be seen in \cite{Aubry-Katz-Langevin-2014, Aubry-Katz-Langevin-2015, CakCak-Langevin, Calderbank-McGuire-Poonen-Rubinstein, Canteaut-Charpin-Dobbertin-1999, Canteaut-Charpin-Dobbertin-2000-Binary, Canteaut-Charpin-Dobbertin-2000-Weight, Charpin, Dobbertin-Helleseth-Kumar-Martinsen, Feng, Hollmann-Xiang, Hou, Kasami-1971, Katz-2012, Katz-2015, Katz-Langevin-2015, Katz-Langevin-2016, Langevin-1999, Langevin-Veron, Leander-Langevin, McGuire, McGuire-Calderbank}.
The main tool in determining the $p$-adic valuation of these Weil sums is Stickelberger's Theorem on the valuation of Gauss sums, which allows for an exact determination of
\begin{equation}\label{Barbara}
V_{F,d} = \min_{a \in F} \valp(W_{F,d}(a))
\end{equation}
in terms of a combinatorial formula that is given in Lemma \ref{Stanley} below.

When $d$ is a power of $p$ modulo $q-1$, we see that
\begin{equation}\label{Theresa}
W_{F,d}(a) = \sum_{x \in F} \psi((1-a) x) =\begin{cases} \card{F} & \text{if $a=1$,} \\ 0 & \text{otherwise,} \end{cases}
\end{equation}
and $V_{F,d}=[F\colon\Fp]$.  We say that $d$ is {\it degenerate over $F$} in this case because $W_{F,d}$ degenerates to the Weil sum of a monomial or a constant.
If $d$ is degenerate over $F$, then it is also degenerate over any subfield of $F$.
Since $\sum_{a \in F} W_{F,d}(a)=\card{F}$ (see \cite[Corollary 2.6(i)]{Katz-2015}), the case where $d$ is degenerate gives the highest possible value of $V_{F,d}$.
In examining computer calculations of many values of $W_{F,d}(a)$ (see \cite{Langevin-2007}), we noticed that there is a significant gap between the highest values of $V_{F,d}$ observed for nondegenerate $d$ and the value $V_{F,d}=[F\colon\Fp]$ for degenerate $d$.  This observation led us to conjecture that $V_{F,d} \leq (2/3) [F\colon\Fp]$ for nondegenerate $d$, and the main result of this paper is a proof of our conjecture.  We also obtain stronger bounds in specific cases.  One should note that nondegenerate $d$ do not exist when $F=\F_2$, $\F_3$, or $\F_4$.
\begin{theorem}\label{Cecilia}
Let $W_{F,d}(a)$ and $V_{F,d}$ be as defined in \eqref{Alice} and \eqref{Barbara}, where $\gcd(d,q-1)=1$.
\begin{enumerate}[(i)]
\item\label{Timothy} If $d$ is degenerate over $F$, then $V_{F,d}=[F\colon\Fp]$.
\item If $d$ is nondegenerate over $F$, but is degenerate over $\Fp$, and 
\begin{enumerate}[(a)]
\item\label{George} if $[F\colon\Fp]$ is a power of $2$, then $V_{F,d} \leq \frac{1}{2} [F\colon\Fp]$, but
\item\label{Henry} otherwise $ V_{F,d} \leq \frac{2}{3} [F\colon\Fp]$.
\end{enumerate}
\item\label{Nelson} If $d$ is nondegenerate over $\Fp$ (so $p \geq 5$), and
\begin{enumerate}[(a)]
\item\label{David} if $p\equiv 1 \pmod{4}$ and if $[F\colon\Fp]$ is odd, then $V_{F,d} \leq \frac{1}{2} [F\colon\Fp]$, but
\item\label{Edward} otherwise $V_{F,d} \leq \frac{1}{p-1} \ceil{\frac{p-1}{3}} [F\colon\Fp]$.
\end{enumerate}
\end{enumerate}
\end{theorem}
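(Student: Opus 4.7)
Part (\ref{Timothy}) is immediate from \eqref{Theresa}: the only nonzero Weil sum is $W_{F,d}(1) = \card{F} = p^n$, with $\valp$ equal to $n = [F\colon\Fp]$. For the remaining parts, I would invoke Lemma \ref{Stanley}, which via Stickelberger's theorem rewrites $(p-1) V_{F,d}$ as the minimum, over $t$ in a suitable subset of $\{1,\ldots,q-2\}$, of an expression of the form $s_p(t) + s_p((-dt) \bmod (q-1))$, where $s_p$ denotes the base-$p$ digit sum. Each upper bound in (ii) and (iii) then reduces to exhibiting an explicit $t$ witnessing a small value of this digit-sum expression; the nondegeneracy hypothesis on $d$ guarantees that the chosen witness does not degenerate back to the trivial value $n$ of Part (\ref{Timothy}).

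For Part (ii), where $d \equiv 1 \pmod{p-1}$ but $d$ is not a power of $p$ modulo $q-1$, one looks for $t$ supported on few base-$p$ digit positions such that $-dt \bmod (q-1)$ is similarly supported. When $n$ is a power of $2$, the natural candidate comes from the quadratic subfield: one tries a cyclic sum with at most $n/2$ nonzero digits arranged so that multiplication by $-d$ permutes the support, yielding total digit sum $(p-1)(n/2)$. For the general $2n/3$ bound, a similar subfield or tripartite construction, using a divisor of $q-1$ corresponding to orbits of length roughly $n/3$ under the Frobenius action $t \mapsto pt$ on $\Z/(q-1)\Z$, should work, with explicit case analysis depending on whether $n$ is divisible by small primes.

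For Part (iii), where $d \not\equiv 1 \pmod{p-1}$, one has a nontrivial residue class $-d \bmod (p-1)$ to exploit, and the plan is to try essentially single-digit $t \in \{1,\ldots,p-1\}$: then $s_p(t) = t$ and $(-dt) \bmod (q-1)$ has a digit pattern controlled by the carries in its base-$p$ expansion, which on average distribute an amount close to $(p-1-t) \cdot n$ across the $n$ positions. Optimizing the choice of $t$ yields the $\frac{1}{p-1}\ceil{(p-1)/3}[F\colon\Fp]$ bound of (iii)(\ref{Edward}). The sharper bound (iii)(\ref{David}) under $p \equiv 1 \pmod 4$ and $n$ odd should come from exploiting that $-1$ is a square in $\grmul{\Fp}$, producing an extra symmetry between $t$ and $-dt$ that cuts the digit sum essentially in half.

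The main obstacle will be the casework needed to exhibit optimal witnesses in Part (ii)(\ref{Henry}), where the factor $2/3$ is tight and is attained for infinitely many $F$ by the companion result mentioned in the abstract. Arithmetic subtleties such as whether $3 \mid n$, whether $3 \mid (p-1)$, and whether $d$ has extra structure modulo divisors of $q-1$ will need to be handled separately, and the choice of witness in each subcase requires careful bookkeeping of carries in the base-$p$ expansion of $-dt \bmod (q-1)$ to ensure that the number of nonzero digit positions, together with their values, stays below $(p-1) \cdot 2n/3$.
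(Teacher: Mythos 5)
Your Part (\ref{Timothy}) is fine, but the plan for the remaining parts has genuine gaps, and for Part (ii) it heads into a known dead end. You propose to prove every bound by exhibiting, via Lemma \ref{Stanley}, an explicit nonzero $a\in\Z/(q-1)\Z$ with $\wt_{p,n}(a)+\wt_{p,n}(-da)$ small. The paper deliberately does \emph{not} do this for Part (ii): the universal bound (\ref{Henry}) is proved in Theorem \ref{Dorothy} by a third-power-moment argument --- one computes $\sum_{a}\mywalsh{u}{a}\mywalsh{v}{a}\mywalsh{w}{a}=q^2(\delta_{u,v}-1+N(u,v,w))$ and shows that $f_{v,u}(x)=v(x+1)^d-ux^d$ cannot be a permutation of $F$ for every $u\not=v$ unless $(x+1)^e-x^e-1$ vanishes identically, which nondegeneracy forbids --- and the power-of-$2$ bound (\ref{George}) is proved in Theorem \ref{Deidre} by combining a cited exact evaluation of $V_{L,d}$ over the smallest field $L$ where $d$ is nondegenerate with the subfield inequality $V_{F,d}\leq[F\colon L]\,V_{L,d}$ of Lemma \ref{Lisa} (Davenport--Hasse). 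Section \ref{Roland} of the paper explains that the direct Stickelberger route you sketch for the $2/3$ bound reduces to the Covering Conjecture (Conjecture \ref{Ilse}), which the authors could not prove in general; so your ``tripartite construction'' of a low-weight witness is precisely the missing step, not a routine piece of casework. Likewise, for (\ref{George}) you give no reason why a $t$ with at most $n/2$ nonzero digits whose support is permuted by multiplication by $-d$ should exist for an arbitrary nondegenerate $d\equiv 1\pmod{p-1}$.

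For Part (iii) your instinct (Stickelberger plus digit sums) matches the paper, but you are missing the reduction that makes it tractable: Lemma \ref{Lisa} gives $V_{F,d}\leq[F\colon\Fp]\cdot V_{\Fp,d}$, so the entire analysis is carried out in $\Z/(p-1)\Z$, where $\wt_{p,1}(a)$ is just the least nonnegative residue and there are no carries to track. The witnesses there are not single digits in your sense; Propositions \ref{Angela} and \ref{Bernard} run a case analysis on where $d$ sits relative to $u/3$, $u/2$, $2u/3$ (with $u=p-1$) and use witnesses such as $a=\lfloor u/d\rfloor$, $a=3$, and $a=2b+1$ for a carefully chosen $b$. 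Your heuristic that the carries ``on average'' distribute $(p-1-t)n$ over $n$ positions is not a proof and would be hard to control directly in $\Z/(p^n-1)\Z$. Finally, the exceptional class $d\equiv(p+1)/2\pmod{p-1}$ must be treated separately (Lemma \ref{Nancy}, and Proposition \ref{Samuel} over $\F_{p^2}$ when $n$ is even and $p>5$); the relevant mechanism is a digit computation showing $V_{\Fp,(p+1)/2}=1/2$, with $p\equiv 1\pmod 4$ entering only through $\gcd((q+1)/2,q-1)=1$, not through $-1$ being a square.
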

One can see that when $d$ is not degenerate over $F$, the bound $V_{F,d} \leq (2/3) [F\colon\F_p]$ is always true: this universal bound is proved in Theorem \ref{Dorothy}.
The stronger bound in part \eqref{George} when $[F\colon\Fp]$ is a power of $2$ is proved in Theorem \ref{Deidre}.
Interestingly, these two proofs do not use Stickelberger's Theorem, which is the most commonly used tool in determining the $p$-divisibility of these sums.
We do use Stickelberger's Theorem to establish the bounds in part \eqref{Nelson} (proved in Theorem \ref{Natasha}).  Part \eqref{Timothy} follows from \eqref{Theresa}.
\begin{remark}\label{Michael}
The compositum of parts \eqref{George} and \eqref{Nelson} of Theorem \ref{Cecilia} show that if $[F\colon\Fp]$ is a power of $2$ and $d$ is nondegenerate over $F$, then $V_{F,d} \leq (1/2) [F\colon\Fp]$.
\end{remark}
\begin{remark}\label{Francis}
For each case in Theorem \ref{Cecilia} where we have an upper bound for $V_{F,d}$, we now mention those $F$ covered by that case where we know that there exists an exponent $d$ such that $d$ meets the conditions of that case and $V_{F,d}$ equals the upper bound.
\begin{itemize}
\item For every field in case \eqref{George} (see Lemma \ref{Nora} in conjunction with Theorem \ref{Deidre}).
\item In case \eqref{Henry}, if $3 \mid [F\colon\Fp]$ (see Lemma \ref{Gerald}).

\item For every field in case \eqref{David} (see Lemma \ref{Nancy}).
\item In case \eqref{Edward}, if 
\begin{itemize}
\item $p\equiv 1\pmod{3}$ and $3 \nmid [F\colon\Fp]$ (see Lemmata \ref{Peter} and \ref{Paul}); or
\item $p\equiv 2\pmod{3}$ and $2 \nmid [F\colon\Fp]$ (see Lemma \ref{Raphael}).
\end{itemize}
\end{itemize}
\end{remark}
This paper is organized as follows.  After reviewing some basic results in Section \ref{Priscilla}, we prove the universal bound $V_{F,d} \leq (2/3) [F\colon\Fp]$ (when $d$ is nondegenerate over $F$) in Section \ref{Victor}, and then show that this bound is attained whenever $[F\colon\Fp]$ is divisible by $3$.  In Section \ref{Thomas} we prove the bound in part \eqref{George} of Theorem \ref{Cecilia}, where $F$ is obtained from its prime subfield via a tower of quadratic extensions.  In Section \ref{Thomas} we also prove that this bound is always attained for some $d$ in every field $F$ satisfying the hypotheses.  The bounds in part \eqref{Nelson} of Theorem \ref{Cecilia}, when $d$ is known to be nondegenerate over the prime subfield, are proved in Section \ref{Matilda}.  In Section \ref{Roland}, we discuss some open problems.

\section{Preliminaries}\label{Priscilla}

Here we recall some well known results that will be useful in the rest of the paper.
We continue to use the definition of the Weil sum $W_{F,d}(a)$ from \eqref{Alice} and the definition $V_{F,d}$ from \eqref{Barbara} in this section and in the rest of the paper.
\begin{remark}\label{Olaf}
If $d$ is an integer coprime to $q-1$, then $(-1)^d=-1$ in $F$.  This is because the coprimality makes $d$ odd when $p$ is odd.
\end{remark}
\begin{remark}\label{Penelope}
If $d$ and $d'$ are positive integers coprime to $q-1$ such that $d' \equiv d p^k \pmod{q-1}$ for some $k \in \Z$, then $W_{F,d'}(a)=W_{F,d}(a)$ for all $a \in F$.  This is because for every $x \in F$, we have $x^q=x$ and also the absolute trace has $\Tr(x^p)=\Tr(x)$, so that the canonical additive character has $\psi(x^p)=\psi(x)$.
\end{remark}
\begin{remark}\label{Ignatius}
If $d$ and $e$ are positive integers coprime to $q-1$ with $d e \equiv 1 \pmod{q-1}$, then $W_{F,e}(a)=W_{F,d}(a^{-e})$ for every $a \in \grmul{F}$ via the reparameterization mentioned in the Introduction (and use Remark \ref{Olaf} to get the correct sign).  Since $W_{F,d}(0)=W_{F,e}(0)=0$, and $a\mapsto a^{-e}$ is a permutation of $\grmul{F}$, this means that $V_{F,e}=V_{F,d}$.
\end{remark}
\begin{remark}\label{Joshua}
If $d$ is a positive integer with $\gcd(d,q-1)=1$, then it is easy to calculate that $\sum_{a \in F} W_{F,d}(a)=q$ (the first power moment: see \cite[Corollary 2.6(i)]{Katz-2015}), so $V_{F,d} \leq \valp(q) = [F\colon\Fp]$.  And the inequality becomes an equality if $d$ is degenerate over $F$ by \eqref{Theresa}.
\end{remark}
We let $\mchars{F}$ denote the group of multiplicative characters of $F$, and we denote the trivial multiplicative character by $1$ and use the shorthand $\bar{\chi}=\chi^{-1}$.  For $\chi \in \mchars{F}$, we define the Gauss sum
\[
\tau(\chi)=\sum_{a \in \grmul{F}} \psi(a)\chi(a),
\]
where $\psi$ is the canonical additive character of $F$ as defined in the Introduction.
We extend the $p$-adic valuation $\valp$ from $\Q$ to $\Q(e^{2\pi i/p},e^{2\pi i/(q-1)})$ so that $\valp(1-e^{2\pi i/p})=1/(p-1)$.  This enables us to consider the $p$-adic valuation of our Gauss sums.
\begin{lemma}\label{Gary}
Let $d$ be a positive integer with $\gcd(d,q-1)=1$.  Then for $a \in \grmul{F}$, we have
\[
W_{F,d}(a) = \frac{q}{q-1} + \frac{1}{q-1} \sums{\chi\in\mchars{F} \\ \chi\not=1} \tau(\chi) \tau(\bar{\chi}^d) \chi^d(a),
\]
and for $\chi\in\mchars{F}$, we have
\[
\sum_{a\in\grmul{F}} W_{F,d}(a) \bar{\chi}^d(a) = \begin{cases}
q & \text{if $\chi=1$,} \\
\tau(\chi)\tau(\bar{\chi}^d) & \text{otherwise.}
\end{cases}
\]
\end{lemma}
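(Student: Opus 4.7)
The plan is to prove the second identity directly by expanding $W_{F,d}$ and applying character orthogonality, and then deduce the first identity by Fourier inversion on $\grmul{F}$.

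For the second identity, I substitute the definition of $W_{F,d}(a)$ and interchange the order of summation to obtain
\[
\sum_{a \in \grmul{F}} W_{F,d}(a) \bar{\chi}^d(a) = \sum_{x \in F} \psi(x^d) \sum_{a \in \grmul{F}} \psi(-ax) \bar{\chi}^d(a).
\]
The $x=0$ contribution is $\sum_{a \in \grmul{F}} \bar{\chi}^d(a)$, which by orthogonality vanishes unless $\chi^d = 1$, and since $\gcd(d, q-1) = 1$ this happens exactly when $\chi = 1$ (giving $q-1$ in that case). For each $x \in \grmul{F}$, the substitution $a \mapsto -a/x$ in the inner sum extracts a factor $\chi^d(x)$ and converts what remains into the Gauss sum $\tau(\bar{\chi}^d)$. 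The outer sum then becomes $\sum_{x \in \grmul{F}} \psi(x^d)\chi^d(x)$, and the bijective substitution $u = x^d$ on $\grmul{F}$ (valid by $\gcd(d,q-1) = 1$, and noting $\chi^d(x) = \chi(u)$) reduces this to $\sum_{u \in \grmul{F}} \psi(u)\chi(u) = \tau(\chi)$. Assembling the pieces gives the stated value.

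For the first identity, I Fourier-expand the function $a \mapsto W_{F,d}(a)$ on $\grmul{F}$. Because $\gcd(d,q-1)=1$, the map $\chi \mapsto \chi^d$ permutes $\mchars{F}$, so the expansion may be reindexed by $\chi^d$ instead of by a generic character. Writing
\[
W_{F,d}(a) = \frac{1}{q-1} \sum_{\chi \in \mchars{F}} \Bigl(\sum_{b \in \grmul{F}} W_{F,d}(b)\bar{\chi}^d(b)\Bigr)\chi^d(a)
\]
and plugging in the evaluation of the inner sum from the second identity, the $\chi = 1$ term contributes $q/(q-1)$ and the remaining terms assemble into the stated formula.

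Neither step is conceptually deep; the one place that demands attention is the bookkeeping of the sign $\bar{\chi}^d(-1)$ that appears when changing variables in the inner character sum. Controlling it uses Remark \ref{Olaf}, which ensures $d$ is odd when $p$ is odd so that this sign can be folded into the standard Gauss-sum identities. That bookkeeping is the main (though minor) obstacle; once it is done, the rest of the argument is orthogonality together with the permutation property of $\chi \mapsto \chi^d$.
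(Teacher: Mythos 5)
Your overall strategy---prove the second identity by expanding $W_{F,d}$ and applying orthogonality, then recover the first by Fourier inversion reindexed through the permutation $\chi\mapsto\chi^d$---is sound, and it is genuinely different from the paper, which does not prove the lemma at all but simply cites equations (3) and (4) of Aubry--Katz--Langevin (2015). Every step of your computation checks out except the one you yourself flag as ``the main (though minor) obstacle,'' and there the argument has a real gap: the sign does not go away. Carrying out your substitution carefully, for $x\in\grmul{F}$ one finds
\[
\sum_{a\in\grmul{F}}\psi(-ax)\,\bar{\chi}^d(a)=\chi^d(-1)\,\chi^d(x)\,\tau(\bar{\chi}^d),
\]
and Remark \ref{Olaf} only tells you that $\chi^d(-1)=\chi(-1)$ (since $d$ is odd when $p$ is odd); it does not make this factor equal to $1$. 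Whenever $q$ is odd there are nontrivial $\chi$ with $\chi(-1)=-1$, so your computation actually yields $\sum_{a\in\grmul{F}}W_{F,d}(a)\bar{\chi}^d(a)=\chi(-1)\,\tau(\chi)\tau(\bar{\chi}^d)$ for $\chi\not=1$, and correspondingly an extra factor $\chi(-1)$ inside the sum in the first identity. There is no Gauss-sum identity into which this can be ``folded.''

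The residual $\chi(-1)$ is not an artifact of your method: with the paper's definitions the identities as printed already fail for $q=3$, $d=1$. There $W_{F,1}(a)$ equals $3$ when $a=1$ and $0$ otherwise, so $\sum_{a\in\grmul{F}}W_{F,1}(a)\bar{\eta}(a)=3$ for the quadratic character $\eta$, whereas $\tau(\eta)\tau(\bar{\eta})=\eta(-1)\cdot 3=-3$. So the stated formulas are off by exactly the factor $\chi(-1)$ (equivalently, $\chi^d(a)$ should read $\chi^d(-a)$), presumably a transcription slip from the cited source. This is harmless for everything the paper subsequently does with the lemma---Corollary \ref{Mary} only uses $p$-adic valuations, and $\valp(\chi(-1))=0$---but your write-up cannot claim to derive the identity exactly as stated: you should either carry the factor $\chi(-1)$ explicitly through both formulas or note the needed correction to the statement.
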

\begin{proof}
The first formula is proved in \cite[eq.~(3)]{Aubry-Katz-Langevin-2015}, and the second is easily obtained from eq.~(4) of the same paper.
\end{proof}
\begin{corollary}\label{Mary}
Let $d$ be a positive integer with $\gcd(d,q-1)=1$.  If $q=2$, then $d$ is degenerate over $F$ and $V_{F,d}=1$.  If $q > 2$, then
\[
V_{F,d} = \mins{\chi\in\mchars{F} \\ \chi\not=1} \valp(\tau(\chi) \tau(\bar{\chi}^d)).
\]
\end{corollary}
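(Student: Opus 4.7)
The plan is to dispatch $q=2$ by observing that $d$ is forced to be degenerate, and then for $q>2$ to read off the equality $V_{F,d}=M$ (where $M$ denotes the right-hand side of the asserted formula) from the two formulas of Lemma \ref{Gary} supplemented by a short averaging argument.

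When $q=2$, every integer is congruent to $p^0$ modulo $q-1=1$, so $d$ is automatically degenerate; \eqref{Theresa} then gives $W_{F,d}(1)=2$ and $W_{F,d}(0)=0$, whence $V_{F,d}=\valp(2)=1$.

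When $q>2$, the second formula of Lemma \ref{Gary}, together with $\valp(\bar{\chi}^d(a))=0$ and the non-archimedean triangle inequality, immediately yields $M \geq V_{F,d}$. For the reverse inequality, the first formula (using $\valp(q-1)=0$) gives $\valp(W_{F,d}(a)) \geq \min([F\colon\Fp], M)$ for every $a \in \grmul{F}$, and hence $V_{F,d} \geq \min([F\colon\Fp], M)$. To convert this into $V_{F,d} \geq M$ I would prove the bound $M \leq [F\colon\Fp]$ by a short averaging argument: the classical relation $\tau(\chi)\tau(\bar{\chi})=\chi(-1)q$ gives $\valp(\tau(\chi))+\valp(\tau(\bar{\chi}))=[F\colon\Fp]$ for every nontrivial $\chi$, and since $\chi\mapsto\bar{\chi}^d$ is a bijection on nontrivial characters (as $\gcd(d,q-1)=1$), summing shows that the average of $\valp(\tau(\chi)\tau(\bar{\chi}^d))$ over nontrivial $\chi$ is exactly $[F\colon\Fp]$; thus the minimum $M$ cannot exceed this average.

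The only delicate point is the edge case $V_{F,d}=[F\colon\Fp]$ (the extremal case of Remark \ref{Joshua}), where the constant $q$ in the first formula of Lemma \ref{Gary} has the same $p$-adic valuation as $(q-1)W_{F,d}(a)$, so the naive bound only yields $V_{F,d} \geq [F\colon\Fp]$ rather than $V_{F,d} \geq M$. The averaging bound $M \leq [F\colon\Fp]$ is precisely what closes this gap and finishes the proof.
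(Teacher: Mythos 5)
Your proposal is correct and follows essentially the same route as the paper: both directions come from the two formulas of Lemma \ref{Gary}, and the key remaining point --- that the minimum $M$ of $\valp(\tau(\chi)\tau(\bar{\chi}^d))$ over nontrivial $\chi$ is at most $\valp(q)$ --- is established by the same underlying identity $\tau(\chi)\tau(\bar{\chi})=\chi(-1)q$ together with the bijectivity of $\chi\mapsto\bar{\chi}^d$; the paper packages this as $\prod_{\chi\not=1}\tau(\chi)\tau(\bar{\chi}^d)=\pm q^{q-2}$, which is just the multiplicative form of your averaging computation. The argument is sound, including your handling of the edge case where $V_{F,d}=[F\colon\Fp]$.
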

\begin{proof}
The $q=2$ case is immediate from Remark \ref{Joshua}, so assume $q > 2$ henceforth.  Note that multiplicative characters take nonzero elements of $F$ to roots of unity in $\C$, which have $p$-adic valuation $0$.
Thus the first formula in Lemma \ref{Gary} shows that
\[
\min_{a \in \grmul{F}} \valp(W_{F,d}(a)) \geq \min\left(\{\valp(q)\}\cup \{\valp(\tau(\chi)\tau(\bar{\chi}^d)) : \chi\in\mchars{F}, \chi\not=1\}\right),
\]
and the reverse inequality follows from the second formula in Lemma \ref{Gary}.
(In essence, the minimum $p$-adic valuation of the Fourier coefficients is the same as the minimum $p$-adic valuation of the original function when $p$ does not divide the order of the underlying group.)
Since $W_{F,d}(0)=0$, we could extend the minimization on the left hand side to include $a=0$.  Thus
\[
V_{F,d} =  \min\left(\{\valp(q)\}\cup \{\valp(\tau(\chi)\tau(\bar{\chi}^d)) : \chi\in\mchars{F}, \chi\not=1\}\right).
\]
So it remains to show that there is some nontrivial $\chi\in\mchars{F}$ such that $\valp(\tau(\chi)\tau(\bar{\chi}^d)) \leq \valp(q)$.  Since $|\tau(\chi)|^2=q$ for any nontrivial multiplicative character and $\tau(\bar{\chi})=\chi(-1) \overline{\tau(\chi)}$ (see \cite[Theorems 5.11, 5.12(iii)]{Lidl-Niederreiter}), and since $d$ is coprime to $q-1$, we see that $\prod_{\chi \not=1} \tau(\chi) \tau(\bar{\chi}^d) \in \{\pm q^{q-2}\}$.  So there is some nontrivial $\chi\in\mchars{F}$ with $\valp(\tau(\chi)\tau(\bar{\chi}^d)) \leq \valp(q)$.
\end{proof}
We also state some useful bounds relating $V_{F,d}$ and $V_{K,d}$ when $K$ is a subfield of $F$.  
\begin{lemma}\label{Lisa}
Let $K$ be a subfield of $F$, and let $d$ be a positive integer with $\gcd(d,\card{\grmul{F}})=1$.
Then $V_{K,d} \leq V_{F,d} \leq [F\colon K] \cdot V_{K,d}$.
\end{lemma}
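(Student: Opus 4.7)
The plan is to derive both bounds from Corollary~\ref{Mary} together with the Hasse--Davenport product formula. The edge cases are handled directly: if $\card{K}=2$ then $p=2$, every $d$ is degenerate over $K$ and $V_{K,d}=1$ by \eqref{Theresa}, the upper bound follows from Remark~\ref{Joshua}, and the lower bound is automatic from the general estimate $\valp(\tau_F(\chi)\tau_F(\bar\chi^d))\geq 2/(p-1)=2$ combined with Corollary~\ref{Mary}; if $K=F$ the inequalities are trivial. I henceforth assume $2<\card{K}<\card{F}$.

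For the upper bound $V_{F,d}\leq[F\colon K]V_{K,d}$, I use Corollary~\ref{Mary} to pick a nontrivial $\chi'\in\mchars{K}$ attaining $\valp(\tau_K(\chi')\tau_K(\bar{\chi'}^d))=V_{K,d}$ and lift it via the surjective norm to $\chi=\chi'\circ N_{F/K}\in\mchars{F}$, which is therefore nontrivial. Since lifting commutes with $d$-th powers and conjugation, $\bar\chi^d=\bar{\chi'}^d\circ N_{F/K}$ is also a lift. The Hasse--Davenport product formula $-\tau_F(\psi\circ N_{F/K})=(-\tau_K(\psi))^{[F\colon K]}$, applied once to $\psi=\chi'$ and once to $\psi=\bar{\chi'}^d$, gives
\[
\valp\bigl(\tau_F(\chi)\tau_F(\bar\chi^d)\bigr)=[F\colon K]\cdot\valp\bigl(\tau_K(\chi')\tau_K(\bar{\chi'}^d)\bigr)=[F\colon K]V_{K,d},
\]
and Corollary~\ref{Mary} applied to $F$ yields the desired bound.

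For the lower bound $V_{K,d}\leq V_{F,d}$, the plan is to show $\valp(\tau_F(\chi)\tau_F(\bar\chi^d))\geq V_{K,d}$ for every nontrivial $\chi\in\mchars{F}$, and then invoke Corollary~\ref{Mary}. Lifted characters are dispatched immediately by Hasse--Davenport as above, producing the stronger bound $[F\colon K]V_{K,d}$. For non-lifted $\chi$, I would apply Stickelberger's theorem (Lemma~\ref{Stanley}) to rewrite $\valp(\tau_F(\chi))=s_p(t)/(p-1)$ as a normalized base-$p$ digit sum of the integer exponent $t$ labeling $\chi$, then use the combinatorial fact that reducing $t$ modulo $q_K-1$ never increases the base-$p$ digit sum; a case-split on whether $(q_K-1)\mid t$ (the case $(q_K-1)\mid t$ following from a direct count giving $\valp(\tau_F(\chi))\geq[K\colon\Fp]\geq V_{K,d}$) delivers the required uniform bound. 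The principal obstacle is precisely this lower bound, since Hasse--Davenport supplies a clean product identity only for characters lifted along $N_{F/K}$, and the digit-sum combinatorics of Stickelberger's theorem is what closes the gap for the many characters of $\grmul{F}$ that fail to factor through the norm.
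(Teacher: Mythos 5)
Your proposal is correct, and while your upper bound follows the paper exactly (lift a minimizing character of $K$ along the norm, apply Davenport--Hasse, and invoke Corollary \ref{Mary}, with the small-field/degenerate edge case dispatched by Remark \ref{Joshua}), your lower bound takes a genuinely different route. The paper proves $V_{K,d}\leq V_{F,d}$ without Gauss sums at all: it splits $\sum_{x\in\grmul{F}}\psi_F(x^d-ax)$ over the cosets of $\grmul{K}$ in $\grmul{F}$, so that each inner sum is either $0$, $\card{K}$, or a Weil sum $W_{K,d}(b)$ over $K$, and the valuation bound follows termwise; the authors point out in the remark after the lemma that this avoids both Stickelberger's and McEliece's theorems. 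You instead bound $\valp(\tau_F(\chi)\tau_F(\bar{\chi}^d))$ character by character via Stickelberger, which is essentially the McEliece-flavored argument that Canteaut--Charpin--Dobbertin used in characteristic $2$. Your plan does close: writing $\chi=\omega^{-t}$ and $m=[K\colon\Fp]$, grouping the base-$p$ digits of $t$ into blocks of length $m$ and using subadditivity of $\wt_{p,m}$ shows that reduction modulo $\card{K}-1$ cannot increase the digit sum, which settles the case $t\not\equiv 0\pmod{\card{K}-1}$; and when $(\card{K}-1)\mid t$ with $t\neq 0$, the standard representative of $t$ is a positive integer multiple of $p^m-1$, whose base-$p$ digit sum is at least $m(p-1)$ --- this is true but is the one step you must actually write out (an induction on the same block decomposition does it), since "a direct count" glosses over a nontrivial fact. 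As for what each approach buys: yours is quick once Lemma \ref{Stanley} is available and keeps both halves of the lemma inside the Gauss-sum framework, while the paper's coset decomposition is elementary and self-contained, which is what lets the authors advertise that the universal bound of Theorem \ref{Dorothy} is obtained without Stickelberger's theorem.
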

\begin{proof}
First we prove the lower bound on $V_{F,d}$.
Let $\psi_K$ and $\psi_F$ be the canonical additive characters for $K$ and $F$, respectively, and note that $\psi_F=\psi_K\circ \Tr_{F/K}$, where $\Tr_{F/K}$ is the Galois-theoretic relative trace from $F$ to $K$.
Let $R$ be a set of representatives for the cosets of $\grmul{K}$ in $\grmul{F}$.  Then for any $a \in K$, we have
\begin{align*}
W_{F,d}(a)
& = 1 + \sum_{x \in\grmul{F}} \psi_F(x^d -a x)  \\
& = 1 + \sum_{r \in R} \sum_{y \in \grmul{K}} \psi_K(\Tr_{F/K}((r y)^d - a r y)) \\
& = 1-\card{R} + \sum_{r \in R} \sum_{y \in K} \psi_K(\Tr_{F/K}(r^d) y^d - \Tr_{F/K}(a r) y).
\end{align*}
Now let us consider the values taken on by the inner sum over $K$ in the last expression: these depend on whether the coefficients $\Tr_{F/K}(r^d)$ and $\Tr_{F/K}(a r)$ are zero or not.  If both coefficients are zero, the sum is $\card{K}$, and if only one is zero, then the sum is $0$: since $\gcd(d,\grmul{F})=1$, the map $y\mapsto y^d$ is a permutation of $F$, and thus restricts to a permutation of $K$.  If both coefficients are nonzero, then the reparameterization described in the Introduction shows that the inner sum over $K$ is $W_{K,d}(b)$ for some $b \in K$.  Since $V_{K,d} \leq [K\colon\F_p]=\valp(\card{K})$ by Remark \ref{Joshua}, we see that our inner sum always has a $p$-adic valuation of at least $V_{K,d}$.  Also note that $\card{R}=(\card{F}-1)/(\card{K}-1)$, so that $\card{R}-1$ is a multiple of $\card{K}$, and so it also has a $p$-adic valuation greater than or equal to $V_{K,d}$.  So for any $a \in F$, we see that $W_{F,d}(a)$ has a $p$-adic valuation greater than or equal to $V_{K,d}$, so $V_{F,d} \geq V_{K,d}$.

Now we prove the upper bound on $V_{F,d}$.  If $d$ is degenerate over $K$, then Remark \ref{Joshua} shows that $[F\colon K] \cdot V_{K,d}=[F\colon\F_p]$, and so our upper bound is true by another application of Remark \ref{Joshua}.  So from now on we assume that $d$ is nondegenerate over $K$ (which forces $\card{K}>2$).  Let $N_{F/K}$ denote the Galois-theoretic relative norm from $K$ to $F$.  Then the Davenport-Hasse relation \cite[Theorem 5.14]{Lidl-Niederreiter} tells us that for $\chi\in\mchars{K}$, we have
\[
-\tau(\chi\circ N_{F/K}) = (-\tau(\chi))^{[F\colon K]}.
\]
As $\chi$ runs through the nontrivial characters in $\mchars{K}$, their lifts $\chi\circ N_{F/K}$ run through the nontrivial characters in $\mchars{F}$ whose orders are divisors of $\card{\grmul{K}}$.
So Corollary \ref{Mary} and the Davenport-Hasse relation show us that
\begin{align*}
V_{F,d}
& \leq \mins{\chi \in \mchars{K} \\ \chi\not=1} \valp(\tau(\chi\circ N_{F/K}) \tau((\overline{\chi\circ N_{F/K}})^d)) \\
& = \mins{\chi \in \mchars{K} \\ \chi\not=1} [F\colon K] \valp(\tau(\chi) \tau(\bar{\chi}^d)) \\
& = [F\colon K] \cdot V_{K,d}. \qedhere
\end{align*}
\end{proof}
\begin{remark} The special case of Lemma \ref{Lisa} for characteristic $2$ was proved in \cite[Theorem 7.1]{Canteaut-Charpin-Dobbertin-2000-Weight} using McEliece's Theorem on $2$-divisibility of weights of words in cyclic codes {\rm (}see \cite[Corollary to Theorem 2]{McEliece}{\rm )}, which is closely related to Stickelberger's Theorem.
Note that our proof in this paper of the general case uses neither Stickelberger's nor McEliece's Theorem.
\end{remark}

Now we provide the definitions needed to make use of Stickelberger's Theorem.
If $t$ is an integer with $t \geq 2$ and $n$ is a positive integer, and if $a \in \Z/(t^n-1)\Z$, we define the {\it standard $t$-ary expansion of $a$} to be the expression
\[
a=a_0 t^0 + a_1 t^1 + \cdots + a_{n-1} t^{n-1},
\]
where the powers of $t$ are elements of $\Z/(t^n-1)\Z$ and $a_0,\ldots,a_{n-1}$ are elements of $\Z$ with $0 \leq a_i < t$ for every $i$, and where we insist that $a_0=\cdots=a_n=0$ when $a=0$ (to make the $a_i$'s uniquely defined).
Then we define the {\it $t$-ary weight of $a \in \Z/(t^n-1)\Z$},
\begin{equation}\label{Walter}
\wt_{t,n}(a) = a_0+a_1+\cdots+a_{n-1},
\end{equation}
so that $\wt_{t,n} \colon \Z/(t^n-1)\Z \to \Z$ with $0 \leq \wt_{t,n}(a) < n(t-1)$ for every $a \in \Z/(t^n-1)\Z$.
Note that our weight function is subadditive, that is, $\wt_{t,n}(a+b) \leq \wt_{t,n}(a)+\wt_{t,n}(b)$ for all $a, b \in \Z/(t^n-1)\Z$.  And this inequality becomes an equality if and only if there are no carries (and no cyclic carries from the $(n-1)$th digit to the $0$th digit) when we compute the sum of $a$ and $b$ in base $t$ arithmetic by summing their standard $t$-ary expansions.
\begin{lemma}\label{Stanley}
Let $q=p^n > 2$, and let $d$ be a positive integer with $\gcd(d,q-1)=1$.  Let
\[
m = \mins{a \in \Z/(p^n-1)\Z \\ a \not=0} \wt_{p,n}(a)+\wt_{p,n}(-d a),
\]
or equivalently,
\[
m = n(p-1) + \mins{a \in \Z/(p^n-1)\Z \\ a \not=0} \wt_{p,n}(d a)-\wt_{p,n}(a).
\]
Then $V_{F,d} = m/(p-1)$.
\end{lemma}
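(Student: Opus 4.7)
The plan is to combine Corollary \ref{Mary} with Stickelberger's Theorem on the $p$-adic valuation of Gauss sums, which is the standard tool here. By Corollary \ref{Mary}, since $q>2$, we have
\[
V_{F,d}=\min_{\chi\in\mchars{F},\,\chi\neq 1}\valp(\tau(\chi)\tau(\bar\chi^d)),
\]
so everything reduces to evaluating $\valp(\tau(\chi))$ for each nontrivial character.

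Next, I would parametrize $\mchars{F}$ by $\Z/(q-1)\Z$. Fix a generator $\omega$ of $\mchars{F}$ chosen so that Stickelberger's Theorem takes its clean form (in effect, $\omega$ is the inverse of the Teichm\"uller character attached to a chosen embedding of $F$ into an algebraic closure of $\Q_p$ compatible with our extension of $\valp$). Write $\chi=\omega^{a}$ for $a\in\Z/(q-1)\Z$; then $\chi\neq 1$ corresponds to $a\neq 0$, and $\bar\chi^{d}=\omega^{-da}$. Stickelberger's Theorem asserts that
\[
\valp(\tau(\omega^{a}))=\frac{\wt_{p,n}(a)}{p-1}
\]
for every nonzero $a\in\Z/(q-1)\Z$. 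Multiplying valuations and substituting into the displayed formula from Corollary \ref{Mary} gives
\[
V_{F,d}=\min_{a\neq 0}\frac{\wt_{p,n}(a)+\wt_{p,n}(-da)}{p-1}=\frac{m}{p-1},
\]
which is the first form.

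For the equivalent second form I would use the elementary identity $\wt_{p,n}(-b)=n(p-1)-\wt_{p,n}(b)$ for nonzero $b\in\Z/(p^n-1)\Z$ (since the standard $p$-ary digits of $-b$ are the complements $p-1-b_i$ of those of $b$). Substituting $a\mapsto -a$ in the minimization, which is a bijection of nonzero residues, converts
\[
\min_{a\neq 0}\bigl(\wt_{p,n}(a)+\wt_{p,n}(-da)\bigr)
=\min_{a\neq 0}\bigl(\wt_{p,n}(-a)+\wt_{p,n}(da)\bigr)
=n(p-1)+\min_{a\neq 0}\bigl(\wt_{p,n}(da)-\wt_{p,n}(a)\bigr).
\]

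The main obstacle, such as it is, is purely bookkeeping: matching the sign/inversion conventions between the chosen generator $\omega$, the extension of $\valp$ to $\Q(e^{2\pi i/p},e^{2\pi i/(q-1)})$ fixed in the preliminaries, and the form of Stickelberger's Theorem used. Once these conventions are aligned, both equalities are direct; in particular, no further analysis of the inner structure of $W_{F,d}$ is needed beyond what is already packaged in Corollary \ref{Mary}.
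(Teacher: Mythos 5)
Your proposal is correct and takes essentially the same route as the paper's proof: reduce to Gauss sums via Corollary \ref{Mary}, apply Stickelberger's Theorem to powers of the Teichm\"uller character, and obtain the second form from the first by the substitution $a\mapsto -a$ together with the identity $\wt_{p,n}(-a)=n(p-1)-\wt_{p,n}(a)$ for nonzero $a$. The only difference is a harmless normalization (your generator $\omega$ is the inverse of the paper's Teichm\"uller character), which does not affect the argument.
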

\begin{proof}
One can see that our two definitions of $m$ are equivalent by reparameterizing the first one with $-a$ in place of $a$ and then noting that for any nonzero $a \in \Z/(p^n-1)\Z$, we have $\wt_{p,n}(-a)=n(p-1)-\wt_{p,n}(a)$.  Now let $\zeta=e^{2\pi i/(q-1)}$, and identify $F$ with $\Z[\zeta]/P$, where $P$ is a prime ideal of $\Z[\zeta]$ containing $p$.  Then $\zeta+P$ is a primitive element of $F$.  Let $\omega \colon \grmul{F} \to \Q(\zeta)$ be the Teichm\"uller character, which is determined by $\omega(\zeta+P)=\zeta$.  Then Stickelberger's Theorem \cite[Theorem 2.1]{Lang} says that $\val(\tau(\omega^{-a}))=\wt_{p,n}(a)/(p-1)$ for every $a \in \Z/(p^n-1)\Z$.  And $\omega^{-a}$ runs through the nontrivial multiplicative characters of $F$ as $a$ runs through the nonzero elements of $\Z/(p^n-1)\Z$, so the expression for $V_{F,d}$ in Corollary \ref{Mary} becomes the desired expression.
\end{proof}
\begin{corollary}\label{Ursula}
Let $d$ be a positive integer with $\gcd(d,q-1)=1$.  Then $V_{F,d}=[F\colon\F_p]$ if and only if $d$ is degenerate over $F$.  If $d$ is nondegenerate over $F$, then $2/(p-1) \leq V_{F,d} < [F\colon\F_p]$ and $V_{F,d}=2/(p-1)$ if and only if $-d$ is congruent to a power of $p$ modulo $q-1$.  
\end{corollary}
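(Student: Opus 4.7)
The plan is to reduce everything to combinatorics on $p$-ary digit sums via Lemma~\ref{Stanley}, which gives $V_{F,d} = m/(p-1)$ with
\[
m = \mins{a \in \Z/(p^n-1)\Z \\ a \not= 0} \wt_{p,n}(a) + \wt_{p,n}(-d a).
\]
The case $q = 2$ is immediate from Corollary~\ref{Mary}, so I would assume $q > 2$ throughout. The lower bound $V_{F,d} \geq 2/(p-1)$ falls out at once: since $\gcd(d, q-1) = 1$, the element $-d a$ is nonzero in $\Z/(q-1)\Z$ whenever $a$ is, so each summand in the definition of $m$ is at least $1$, forcing $m \geq 2$.

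For the characterization $V_{F,d} = [F\colon\F_p]$ iff $d$ is degenerate over $F$, the direction ``degenerate implies $V_{F,d} = n$'' is \eqref{Theresa}. For the converse, I would specialize $a = 1$ in the minimum to get
\[
m \leq \wt_{p,n}(1) + \wt_{p,n}(-d) = 1 + n(p-1) - \wt_{p,n}(d),
\]
using the identity $\wt_{p,n}(-a) = n(p-1) - \wt_{p,n}(a)$ for nonzero $a$. If $V_{F,d} = n$, equivalently $m = n(p-1)$, then $\wt_{p,n}(d) \leq 1$; since $d \not\equiv 0 \pmod{q-1}$, this forces $\wt_{p,n}(d) = 1$, so $d \equiv p^k \pmod{q-1}$, that is, $d$ is degenerate. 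The same inequality, applied in the nondegenerate case where $\wt_{p,n}(d) \geq 2$, yields $m \leq n(p-1) - 1$, hence $V_{F,d} < [F\colon\F_p]$.

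For the characterization $V_{F,d} = 2/(p-1)$ iff $-d$ is congruent to a power of $p$ modulo $q-1$: if $-d \equiv p^k \pmod{q-1}$, then $a = 1$ gives $\wt_{p,n}(1) + \wt_{p,n}(p^k) = 2$, so $m \leq 2$; combined with $m \geq 2$ from the first paragraph, $m = 2$. Conversely, if $m = 2$, then some nonzero $a$ satisfies $\wt_{p,n}(a) + \wt_{p,n}(-d a) = 2$; since each summand is $\geq 1$, both must equal $1$, so $a \equiv p^i$ and $-d a \equiv p^j$ modulo $q-1$ for some $i, j$, which gives $-d \equiv p^{j-i} \pmod{q-1}$.

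There is no real obstacle here; the whole corollary is a matter of choosing $a = 1$ to get an upper bound on $m$ from Lemma~\ref{Stanley} and then reading off the extremal cases from when that bound is tight. The only subtle point is remembering that $\gcd(d, q-1) = 1$ is what guarantees $-d a \not= 0$ in $\Z/(q-1)\Z$, which is used both to establish $m \geq 2$ and to rule out $\wt_{p,n}(d) = 0$.
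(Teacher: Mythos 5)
Your proposal is correct and follows essentially the same route as the paper: both reduce to Lemma \ref{Stanley}, take $a=1$ to get the upper bound $m \leq 1 + \wt_{p,n}(-d)$ (your reformulation via $\wt_{p,n}(d)\geq 2$ is equivalent to the paper's $\wt_{p,n}(-d) < n(p-1)-1$), and obtain the lower bound and its equality case from the observation that both weights in the minimand are positive and equal $1$ exactly when $a$ and $-da$ are powers of $p$.
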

\begin{proof}
Remark \ref{Joshua} handles the degenerate case.  So we suppose $d$ is nondegenerate over $F$, which forces $q=p^n >4$.

For the upper bound, look at the formula for $m$ in Lemma \ref{Stanley}.  Since $d$ is not a power of $p$ modulo $\Z/(p^n-1)\Z$, we see that $\wt_{p,n}(-d) < n(p-1)-1$, and then $m \leq \wt_{p,n}(1)+\wt_{p,n}(-d) < n(p-1)$.  Thus by Lemma \ref{Stanley}, we know that $V_{F,d} < n=[F\colon\F_p]$.

In the formula for $m$ in Lemma \ref{Stanley}, note that $a$ is nonzero and $d$ is coprime to $p^n-1$, so both $\wt_{p,n}(a)$ and $\wt_{p,n}(-d a)$ are always strictly positive integers.  And these weights are both equal to $1$ simultaneously if and only if both $a$ and $-d a$ are powers of $p$ modulo $q-1$, which will occur for some $a \in \Z/(q-1)\Z$ if and only if $-d$ is a power of $p$ modulo $q-1$.  This proves our lower bound and shows us when it is achieved.
\end{proof}
\begin{corollary}
Let $d$ be a positive integer with $\gcd(d,q-1)=1$.  Then $V_{F,d}+V_{F,-d} \leq [F\colon\F_p]+\frac{2}{p-1}$.
\end{corollary}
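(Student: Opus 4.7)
The plan is to apply Lemma \ref{Stanley} to both exponents $d$ and $-d$ while testing the specific element $a = 1$ in each of the two minimizations. This yields the pair of bounds
\[
V_{F,d}(p-1) \leq \wt_{p,n}(1) + \wt_{p,n}(-d) = 1 + \wt_{p,n}(-d)
\]
and
\[
V_{F,-d}(p-1) \leq 1 + \wt_{p,n}(d)
\]
essentially by inspection.

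The key auxiliary fact I would invoke is the weight-complement identity $\wt_{p,n}(x) + \wt_{p,n}(-x) = n(p-1)$ for any nonzero $x \in \Z/(p^n-1)\Z$. This holds because $q-1 = (p-1)(1+p+\cdots+p^{n-1})$ has standard $p$-ary expansion with every digit equal to $p-1$, so $-x \equiv q-1-x \pmod{q-1}$ has $i$-th digit $p-1-x_i$, and these digits already lie in $\{0,\ldots,p-1\}$ so no carries occur. Since $\gcd(d,q-1)=1$ forces $d\not\equiv 0\pmod{q-1}$ whenever $q>2$, the identity applies to $x=d$, and adding the two bounds above gives
\[
(V_{F,d}+V_{F,-d})(p-1) \leq 2 + \wt_{p,n}(d) + \wt_{p,n}(-d) = 2 + n(p-1).
\]
Dividing by $p-1$ yields the theorem.

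The only case not covered by Lemma \ref{Stanley} is $q=2$, which must be treated separately. But then $d$ is necessarily degenerate, so Remark \ref{Joshua} gives $V_{F,d} = V_{F,-d} = 1$, while the right-hand side equals $1 + 2 = 3$; the bound holds with room to spare. There is no substantial obstacle in this argument: once one has the idea of choosing $a = 1$ as the test value and of invoking the digit-complement identity, the inequality drops out of Lemma \ref{Stanley} in a single line.
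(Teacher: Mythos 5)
Your proposal is correct and follows essentially the same route as the paper's own proof: both apply Lemma \ref{Stanley} with the test value $a=1$ to get $(p-1)V_{F,\pm d}\leq 1+\wt_{p,n}(\mp d)$, add the two bounds, and invoke the digit-complement identity $\wt_{p,n}(d)+\wt_{p,n}(-d)=n(p-1)$ for $d$ nonzero modulo $q-1$, with the $q=2$ case handled separately by degeneracy. The only cosmetic difference is that the paper cites Corollary \ref{Ursula} rather than Remark \ref{Joshua} for the $q=2$ case.
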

\begin{proof}
If $q=2$, then both $d$ and $-d$ are degenerate over $F$, and then our inequality follows from Corollary \ref{Ursula}, so assume $q>2$ henceforth.

By Lemma \ref{Stanley}, we know that $(p-1)V_{F,d} \leq 1+\wt_{p,n}(-d)$ and $(p-1) V_{F,-d} \leq 1+\wt_{p,n}(d)$.  So $(p-1) (V_{F,d}+V_{F,-d}) \leq 2 +\wt_{p,n}(d) + \wt_{p,n}(-d)$.  Since $d$ is coprime to $q-1$ and $q>2$, we see that $d$ must be nonzero modulo $q-1$, and so $\wt_{p,n}(d)+\wt_{p,n}(-d)=n(p-1)$.  Thus $V_{F,d}+V_{F,-d} \leq n + 2/(p-1)$.
\end{proof}

\section{Proof of the Universal Upper Bound}\label{Victor}

In this section, we prove an upper bound on $V_{F,d}$ that holds whenever $d$ is nondegenerate over $F$, and then show that our bound is attained for infinitely many fields $F$.

\begin{theorem}\label{Dorothy}
If $F$ is a finite field of characteristic $p$ and order $q$, and $d$ is a positive integer with $\gcd(d,q-1)=1$ such that $d$ is not degenerate over $F$, then $V_{F,d} \leq \frac{2}{3} [F\colon\Fp]$.
\end{theorem}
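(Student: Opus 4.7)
The plan is to use the third power moment of $W_{F,d}$. First I would expand the cube and swap the summations over $a$ and the three copies of $x$, then evaluate the resulting triple sum over $F^3$ with the constraint $x + y + z = 0$. Splitting off the contributions with at least one zero coordinate (these contribute $3q - 2$, using $(-u)^d = -u^d$ from Remark \ref{Olaf} to make the two remaining terms cancel) and treating the remaining all-nonzero case via the substitution $(x, y, z) = (z u, z v, z)$ with $u + v = -1$ (summing first over $z^d$, which permutes $\grmul{F}$), I would obtain
\[
\sum_{a \in F} W_{F,d}(a)^3 = q^2 \card{S}, \qquad S = \{u \in F : (u+1)^d = u^d + 1\}.
\]

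Next I would compare $p$-adic valuations of the two sides. Since $\valp(W_{F,d}(a)) \geq V_{F,d}$ for every $a$, the left-hand side has $\valp \geq 3 V_{F,d}$, while the right-hand side is a rational integer with $\valp = 2[F\colon\Fp] + \valp(\card{S})$. This yields $V_{F,d} \leq (2[F\colon\Fp] + \valp(\card{S}))/3$. Because $V_{F,d}$ lies in $\frac{1}{p-1}\Z$ and $\valp$ of a nonzero rational integer is itself a nonnegative integer, rounding up converts the hypothesis $V_{F,d} > \frac{2}{3}[F\colon\Fp]$ into $\valp(\card{S}) \geq 1$ in general, and into the sharper $\valp(\card{S}) \geq \ceil{3/(p-1)}$ when $3 \mid [F\colon\Fp]$.

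Finally I would contradict these lower bounds using the nondegeneracy of $d$. The polynomial $P(u) = (u+1)^d - u^d - 1$ is nonzero in $F[u]$ precisely because $d$ is not a power of $p$ modulo $q-1$, so after reducing $d$ mod $q - 1$ one has $\card{S} \leq d - 1$ by degree. The main obstacle lies in the case $d \equiv 1 \pmod{p-1}$: there $u^d = u$ for every $u \in \Fp$, so $\Fp \subseteq S$ and $\card{S} \geq p$ automatically, and the naive degree bound does not immediately rule out $p \mid \card{S}$. In that case I would factor $P = (u^p - u) Q$ in $F[u]$ (of degree $d - 1 - p$), analyze the roots of $Q$ outside $\Fp$ to control $\card{S} - p$ modulo $p$, and combine this with Lemma \ref{Lisa} applied to the smallest subfield of $F$ over which $d$ is nondegenerate (reducing to a base case where the polynomial analysis is tight) to show that $\valp(\card{S})$ cannot be as large as the second step demands.
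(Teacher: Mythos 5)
Your moment identity $\sum_{a\in F} W_{F,d}(a)^3=q^2\card{S}$ with $S=\{u\in F:(u+1)^d=u^d+1\}$ is correctly derived (it is the paper's key identity specialized to the diagonal), but the divisibility argument you build on it has a genuine gap: the contradiction you need in the last step is unattainable. As you note, when $3\nmid[F\colon\Fp]$ you must show $\valp(\card{S})=0$. In characteristic $2$ this is never true: $S$ contains $0$ and $1$, and if $u\in S$ then $u+1\in S$ (since $(u+1)+1=u$ and $(u+1)^d+1=u^d$), so $S$ is a union of orbits of the fixed-point-free involution $u\mapsto u+1$ and $\card{S}$ is always even. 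Concretely, for $F=\F_{128}$ and $d=3$ one has $(u+1)^3-u^3-1=u^2+u$, so $\card{S}=2$ and $\sum_a W_{F,d}(a)^3=2^{15}$; your inequality $3V_{F,d}\leq 2[F\colon\Fp]+\valp(\card{S})$ then gives only $V_{F,d}\leq 5$, whereas the theorem asserts $V_{F,d}\leq 14/3$ (and the true value is $4$). No analysis of the factor $Q$ can repair this, because the obstruction is that $\valp(\card{S})\geq 1$ identically in characteristic $2$, while for $n\equiv 1\pmod 3$ even $\valp(\card{S})=1$ leaves the integer $(2n+1)/3>2n/3$ available to $V_{F,d}$.

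The repair --- and it is what the paper does --- is to decouple the three factors: work with $\mywalsh u a=\sum_{x\in F}\psi(ux^d-ax)$ and the mixed moments $\sum_a \mywalsh u a \mywalsh v a \mywalsh w a=q^2\left(\delta_{u,v}-1+N(u,v,w)\right)$, where $N(u,v,w)$ counts the roots of $v(x+1)^d-ux^d=w$. Taking $u\neq v$ makes the constant term $\delta_{u,v}-1=-1$ a unit, so it suffices to exhibit one triple with $p\mid N(u,v,w)$; since $\sum_w N(u,v,w)=q\equiv 0\pmod p$, such a $w$ exists unless $x\mapsto v(x+1)^d-ux^d$ is a bijection for every $u\neq v$, and nondegeneracy of $d$ rules that out. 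Your polynomial $(u+1)^d-u^d-1$ reappears at that point, but only through the statement that it is not the zero function on $F$ --- a much weaker (and provable) requirement than control of $\valp(\card{S})$.
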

\begin{proof}
Let $\psi\colon F \to \C$ be the canonical additive character of $F$, and for $u, a \in F$, define
\[
\mywalsh u a = \sum_{x\in F} \psi(u x^d - a x).
\]
Then for $u,v,w,a \in F$, we have
\begin{align*}
\frac{1}{q} \sum_{a\in F} \mywalsh u a \mywalsh v a \mywalsh w a
&= \frac{1}{q} \sum_{r,s,t,a \in F} \psi( u r^d + v s^d + w t^d)\psi(-a(r+s+t))\\
&= \sums{r,s,t \in F \\ r+s+t=0} \psi( u r^d + v s^d + w t^d) \\
&= \sums{r,s \in F \\ r+s=0} \psi(u r^d + v s^d) + \sums{r,s \in F \\ t\in\grmul{F} \\ r+s=-t} \psi(u r^d + v s^d + w t^d)\\
& = \sum_{r \in F} \psi((u-v) r^d)+ \sums{x,y \in F \\ t \in \grmul{F} \\ x+y=-1} \psi((u x^d + v y^d + w) t^d) \\
& = q \delta_{u,v}- q + \sums{x,y,t \in F \\ x+y=-1} \psi((u x^d + v y^d + w) t^d),
\end{align*}
where $\delta$ is the Kronecker delta, and where the first sum in the fourth step is obtained using Remark \ref{Olaf}, while the second sum in the fourth step is obtained via the reparameterization $(r,s)=(t x,t y)$.

For $u,v,w \in F$, we define $N(u,v,w)$ to be the number of $(x,y) \in F^2$ simultaneously satisfying $x+y=-1$ and $u x^d + v y^d=-w$, and we define $f_{v,u}(x)=v(x+1)^d-u x^d$.  Then
\begin{align*}
N(u,v,w) & = \card{\left\{(x,y) \in F^2: x+y=-1, u x^d+v y^d=-w\right\}} \\
& = \card{\left\{x \in F: f_{v,u}(x)=w\right\}}.
\end{align*}
Since our assumption that $\gcd(d,q-1)=1$ makes $t\mapsto t^d$ a permutation of $F$, our calculation in the previous paragraph shows that
\[
\sum_{a\in F} \mywalsh u a \mywalsh v a \mywalsh w a =q^2 (\delta_{u,v} - 1 + N(u,v,w)).
\]
If we could find $u,v,w \in F$ with $u\not=v$ and $p\mid N(u,v,w)$, then we would have $p\nmid(\delta_{u,v}-1+N(u,v,w))$, and so
\[
\valp\left(\sum_{a \in F}  \mywalsh u a \mywalsh v a \mywalsh w a\right) = 2 [F\colon\F_p],
\]
which would imply that $\valp(\mywalsh b a) \leq 2[F\colon\Fp]/3$ for some $(b,a) \in F^2$, and in fact, this must be true for some $b \in \grmul{F}$ because direct calculation from the definition of $\mywalsh u a$ shows that
\[
\mywalsh 0 a = \begin{cases}
q & \text{if $a=0$,} \\
0 & \text{otherwise,}
\end{cases}
\]
so that $\valp(\mywalsh 0 a) \geq [F\colon \Fp]$ for all $a \in F$.  And when $b\not=0$, then $\mywalsh b a = W_{F,d}(b^{-1/d} a)$ (where $1/d$ signifies the multiplicative inverse of $d$ modulo $q-1$), so this would prove our claim.

Since $N(u,v,w)$ counts the zeroes of a polynomial, it is always a nonnegative integer.  For any $(u,v) \in F^2$, we observe that
\[
\sum_{w \in F} N(u,v,w) = q.
\]
So the only way that we can have $p\nmid N(u,v,w)$ for every $w \in F$ is if $N(u,v,w)=1$ for every $w \in F$, and this is true if and only if $x\mapsto f_{v,u}(x)$ is a permutation of $F$.
So we will have completed our proof if we can find some $u,v \in F$ with $u\not=v$ such that $x\mapsto f_{v,u}(x)$ is not a permutation of $F$.
In fact, we shall show that for every $v \in \grmul{F}$, there is some $u \in F\smallsetminus\{v\}$ such that $x\mapsto f_{v,u}(x)$ is not a permutation of $F$.

Let $v \in \grmul{F}$, and suppose that $x\mapsto f_{v,u}(x)$ is a permutation of $F$ for every $u \in F \smallsetminus\{v\}$ in order to show a contradiction. 
Thus for any $u,x,y \in F$ with $u\not=v$ and $x\not=y$, we have $f_{v,u}(x)\not=f_{v,u}(y)$, and thus
\[
\frac{f_{v,0}(y)-f_{v,0}(x)}{y^d-x^d} \not = u,
\]
where we have used the fact that $x^d\not=y^d$ because $z\mapsto z^d$ is a permutation of $F$.
So we must have
\[
\frac{f_{v,0}(y)-f_{v,0}(x)}{y^d-x^d}  = v,
\]
for every $x,y \in F$ with $x\not=y$.
This means that $x\mapsto f_{v,v}(x)$ is a constant function on $F$, and in fact, the constant must be $v$ since $f_{v,v}(0)=v$.  Since $v\not=0$, this means that the function $x\mapsto f_{1,1}(x)-1=(x+1)^d-x^d-1$ must be the zero function on $F$.
Let $d'$ be the least positive integer congruent to $d$ modulo $q-1$, and write $d'=p^k e$ for some nonnegative integer $k$ and positive integer $e$ with $p\nmid e$.  So $d \equiv p^k e \pmod{q-1}$ with $0 < e < q$, and in fact $e > 1$ because $d$ is nondegenerate over $F$.
For any $x \in F$, we have $x^d=x^{p^k e}$, so that $0=f_{1,1}(x)-1=(x+1)^{p^k e}-x^{p^k e}-1=((x+1)^e-x^e-1)^{p^k}$, so that $x\mapsto (x+1)^e-x^e-1$ is the zero function on $F$.
But
\[
(x+1)^e-x^e-1 = e x^{e-1} + \sum_{j=1}^{e-2} \binom{e}{j} x^j,
\]
and since $p\nmid e$, this is a polynomial of degree $e-1$ with $0 < e-1 < q$, and so is not divisible by $x^q-x$, so that $x\mapsto (x+1)^e-x^e-1$ is not the zero function on $F$.  This contradiction shows that $f_{v,u}(x)$ is not a permutation of $F$ for some $u\in F\smallsetminus\{v\}$, which completes our proof.
\end{proof}
The following result shows that infinitely many fields $F$ have some exponent $d$ such that the upper bound in Theorem \ref{Dorothy} is attained.
\begin{lemma}\label{Gerald}
Suppose that $[F\colon\Fp]$ is not a power of $2$.  Let $\ell$ be the smallest odd prime divisor of $[F\colon\Fp]$.  Then there is some $d$ with $\gcd(d,q-1)=1$ such that $d$ is nondegenerate over $F$ but degenerate over $\Fp$ and $V_{F,d} = \frac{\ell+1}{2 \ell} [F\colon\Fp]$.
\end{lemma}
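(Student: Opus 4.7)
The plan is to construct $d$ from an explicit exponent over the subfield $K = \F_{p^\ell}$ and combine Lemma~\ref{Lisa} with a direct Stickelberger-based lower bound. Set $k = [F:K] = [F:\Fp]/\ell$, so that $n = k\ell$.

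First I would construct $d_0 \in \Z/(p^\ell-1)\Z$ satisfying $\gcd(d_0, p^\ell-1) = 1$, $d_0 \equiv 1 \pmod{p-1}$, $d_0$ nondegenerate over $K$, and $V_{K,d_0} = (\ell+1)/2$. A natural candidate is
\[
d_0 \;=\; 1 + (p-1)(p + p^2 + \cdots + p^{(\ell-1)/2}) \;=\; p^{(\ell+1)/2} - p + 1.
\]
Its base-$p$ digits place a $1$ in position $0$ and $(p-1)$'s in positions $1,\dots,(\ell-1)/2$, so the digits of $-d_0 \bmod (p^\ell-1)$ can be computed directly, yielding $\wt_{p,\ell}(-d_0) = (\ell+1)(p-1)/2 - 1$. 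Lemma~\ref{Stanley} applied with $a=1$ therefore gives $V_{K,d_0} \le (\ell+1)/2$; the matching lower bound is obtained by a case analysis on the weight of $a$ in $\Z/(p^\ell-1)\Z$ (handling $\wt(a)=1$ via the cyclic-shift invariance of $\wt(-d_0 a)$, and $\wt(a)\ge 2$ directly).

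Next I would lift $d_0$ to an integer $d$ with $d \equiv d_0 \pmod{p^\ell-1}$ and $\gcd(d, p^n-1) = 1$. With $T = (p^n-1)/(p^\ell-1) = 1 + p^\ell + \cdots + p^{(k-1)\ell}$, writing $d = d_0 + s(p^\ell-1)$ and applying CRT across prime divisors $q$ of $T$ produces a valid $s$: primes $q \mid \gcd(p^\ell-1, T)$ are harmless because $\gcd(d_0, p^\ell-1) = 1$, while for primes $q \mid T$ coprime to $p^\ell-1$ we can vary $s$ freely modulo $q$. The conditions $d \equiv 1 \pmod{p-1}$ (using $p-1 \mid p^\ell-1$) and nondegeneracy of $d$ over $F$ (since a congruence $d \equiv p^j \bmod (p^n-1)$ would reduce to $d_0 \equiv p^{j \bmod \ell} \bmod (p^\ell-1)$) then descend from the corresponding properties of $d_0$. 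Lemma~\ref{Lisa} immediately gives
\[
V_{F,d} \;\le\; [F:K]\cdot V_{K,d} \;=\; k\cdot\tfrac{\ell+1}{2} \;=\; \tfrac{\ell+1}{2\ell}\,[F:\Fp],
\]
which is the upper half of the claim.

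The lower bound $V_{F,d} \ge \frac{\ell+1}{2\ell}[F:\Fp]$ is the main obstacle. Via Lemma~\ref{Stanley}, it suffices to prove that $\wt_{p,n}(a) + \wt_{p,n}(-da) \ge (\ell+1)(p-1)k/2$ for every nonzero $a \in \Z/(p^n-1)\Z$. My plan is to decompose $a$ in its base-$p^\ell$ expansion $a = a_0 + a_1 p^\ell + \cdots + a_{k-1} p^{(k-1)\ell}$ with $0 \le a_i < p^\ell$, so that $\wt_{p,n}(a) = \sum_i \wt_{p,\ell}(a_i)$, compute $-da \bmod (p^n-1)$ blockwise using the congruence $d \equiv d_0 \pmod{p^\ell-1}$, and carefully track base-$p$ carries that cross block boundaries. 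The chief technical challenge is that characters which are lifts from $K$ (the $a$ of the form $bT$, equivalently those with $a_0 = a_1 = \cdots = a_{k-1}$) achieve the bound with equality by the Step~1 analysis rescaled by $k$, so the real work is to show that non-lift $a$, whose block structure is disrupted by the action of $d$ and by the carries, cannot strictly beat this lifted minimum.
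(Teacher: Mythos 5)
Your proposal has two genuine problems, one of which is a concrete error in the construction and one of which is the entire content of the lemma left unproved. First, the candidate $d_0=p^{(\ell+1)/2}-p+1$ does not have $V_{K,d_0}=(\ell+1)/2$ for odd $p$ once $\ell\geq 5$. Take $p=3$, $\ell=5$, so $K=\F_{243}$ and $d_0=25$. Then $25\cdot 29=725\equiv -1\pmod{242}$, so $-d_0\cdot 29\equiv 1$, and $\wt_{3,5}(29)+\wt_{3,5}(-25\cdot 29)=3+1=4<6=(\ell+1)(p-1)/2$; by Lemma \ref{Stanley} this gives $V_{K,25}\leq 2<3$. (For $p=2$ your $d_0=2^{(\ell+1)/2}-1$ is the inverse of the Gold exponent $2^{(\ell+1)/2}+1$ and does work, but the odd-characteristic case fails.) So the ``matching lower bound by case analysis'' you assert in Step 1 is false as stated. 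Second, and more fundamentally, the lower bound $V_{F,d}\geq\frac{\ell+1}{2\ell}[F\colon\Fp]$ that you defer to Step 3 is the whole theorem: Lemma \ref{Lisa} only gives $V_{F,d}\geq V_{K,d}$, and its upper bound $V_{F,d}\leq [F\colon K]\,V_{K,d}$ is typically strict, so there is no general principle that the minimum of $\wt_{p,n}(a)+\wt_{p,n}(-da)$ is attained at lifted $a$. Your blockwise carry-tracking plan is a plausible line of attack but is a substantial piece of combinatorics that you have not carried out, and it would in effect reprove deep known results.

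The paper avoids all of this by working directly in $F$ with the classical exponents $d=2^{n/\ell}+1$ or $2^{2n/\ell}-2^{n/\ell}+1$ (for $p=2$) and $d=(p^{2n/\ell}+1)/2$ or $p^{2n/\ell}-p^{n/\ell}+1$ (for $p$ odd), citing Gold, Kasami, Welch, Helleseth, and Trachtenberg for the fact that the value set of $W_{F,d}$ is exactly $\{0,\pm\sqrt{p^{\,n(\ell+1)/\ell}}\}$, which immediately yields the exact value $V_{F,d}=\frac{\ell+1}{2\ell}[F\colon\Fp]$ with no Stickelberger computation; the remaining work is only elementary gcd verifications and the degeneracy checks. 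If you want to salvage your approach, you would need both a correct subfield exponent (e.g.\ $p^2-p+1$ or $(p^2+1)/2$ over $\F_{p^\ell}$, which are the $n=\ell$ instances of the paper's exponents) and a genuine proof that the lift achieves equality in Lemma \ref{Lisa}, which is precisely what the cited spectrum computations provide.
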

\begin{proof}
Recall that $n=[F:\F_p]$.  If $p=2$, and $d=2^{n/\ell}+1$ or $d=2^{2 n/\ell}-2^{n/\ell}+1$, then it is known (see \cite{Gold}, \cite[Theorem 5]{Kasami-1966}, \cite[Remark 3]{Kasami-1971}, \cite[Theorem 16]{Kasami-Lin-Peterson}, \cite{Welch}) that $\{W_{F,d}(a): a \in F\}=\{0,\pm \sqrt{2^{n(\ell+1)/\ell}}\}$, so $V_{F,d}=n(\ell+1)/(2 \ell)$.  If $p$ is odd and $d=(p^{2 n/\ell}+1)/2$ or $d=p^{2 n/\ell}-p^{n/\ell}+1$, then it is known (see \cite{Helleseth-1971}, \cite[Theorem 4.9]{Helleseth-1976}, \cite[Theorems 3.3, 3.4]{Trachtenberg}) that $\{W_{F,d}(a): a \in F\}=\{0,\pm \sqrt{p^{n(\ell+1)/\ell}}\}$, so $V_{F,d}=n(\ell+1)/(2 \ell)$.

Now we demonstrate that each of these exponents is coprime to the order of the group of units for its field.  First of all, $\gcd(2^{n/\ell}+1,2^n-1)=\gcd(2^{n/\ell}+1,(-1)^\ell-1)=\gcd(2^{n/\ell}+1,-2)=1$.  And if $p$ is odd, then $(p^{2 n/\ell}+1)/2$ is odd, so that
\begin{align*}
\gcd\left(\frac{p^{2 n/\ell}+1}{2},p^n-1\right)
& = \frac{1}{2} \gcd(p^{2 n/\ell}+1,p^n-1) \\
& = \frac{1}{2} \gcd(p^{2 n/\ell}+1,(-1)^{(\ell-1)/2} p^{n/\ell} -1) \\
& = \frac{1}{2} \gcd(1+1,(-1)^{(\ell-1)/2} p^{n/\ell}-1) \\
& = 1.
\end{align*}
Now we examine $d=p^{2 n/\ell}-p^{n/\ell}+1$ for an arbitrary prime $p$.
We write the least odd prime divisor $\ell$ of $n$ as $\ell=3 k + r$ with $k \in \Z$ and $r \in \{0,1,2\}$.  Then $(p^{2 n/\ell}-p^{n/\ell}+1)(p^{n/\ell}+1)=p^{3 n/\ell}+1$, so then $\gcd(p^{2 n/\ell}-p^{n/\ell}+1,p^n-1) = \gcd(p^{2 n/\ell}-p^{n/\ell}+1,(-1)^k p^{r n/\ell}-1)$.  If $r=0$, then $\ell=3$, so then $k=1$, and then our greatest common divisor is $\gcd(p^{2 n/3}-p^{n/3}+1,-2)=1$.  If $r=1$, then $k$ must be even (no prime is $4$ modulo $6$), so then our greatest common divisor becomes $\gcd(p^{2 n/\ell}-p^{n/\ell}+1,p^{n/\ell}-1)=\gcd(1-1+1,p^{n/\ell}-1)=1$.  And if $r=2$, then $k$ must be odd (no odd prime is $2$ modulo $6$), so our greatest common divisor becomes $\gcd(p^{2 n/\ell}-p^{n/\ell}+1,-p^{2 n/\ell}-1)=\gcd(p^{n/\ell},-p^{2 n/\ell}-1)=\gcd(p^{n/\ell},-1)=1$.

Since $V_{F,d} < [F\colon\Fp]$ for each of the four $d$ proposed above, it is clear that these values of $d$ are all nondegenerate over $F$.  And our exponents are clearly degenerate over $\Fp$: all exponents are degenerate over $\F_2$, and when $p$ is odd, both $(p^{2 n/\ell}+1)/2=1+(p^{n/\ell}-1)(p^{n/\ell}+1)/2$ and $p^{2 n/\ell}-p^{n/\ell}+1=1+(p^{n/\ell}-1)p^{n/\ell}$ are congruent to $1$ modulo $p-1$.
\end{proof}

\section{Towers of Quadratic Extensions}\label{Thomas}

In this section, we prove an upper bound for $V_{F,d}$ when the degree of $F$ over its prime subfield $\Fp$ is a power of $2$, and when $d$ is nondegenerate over $F$, but is degenerate over $\Fp$.  (This is part \eqref{George} of Theorem \ref{Cecilia}.)  Then we show that for any such $F$, there is some such $d$ for which our upper bound is attained.
\begin{theorem}\label{Deidre}
Let $[F\colon\Fp]=2^s$ for some $s \geq 1$.  Let $d$ be a positive integer with $\gcd(d,q-1)=1$ such that $d$ is not degenerate over $F$, but is degenerate over $\Fp$.  Then $V_{F,d} \leq \frac{1}{2} [F\colon\Fp]$, and this is always an equality if $d$ is degenerate in the subfield of $F$ of order $\sqrt{q}$.
\end{theorem}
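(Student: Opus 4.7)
I would prove this by induction on $s$, reducing the whole theorem to the following key lemma, which I would prove without Stickelberger's Theorem:

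\emph{Key Lemma.} If $E/M$ is any quadratic extension of finite fields of characteristic $p$ and $d$ is coprime to $|\grmul{E}|$, degenerate over $M$, but nondegenerate over $E$, then $V_{E,d} \leq [M\colon\Fp]$.

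Granting the Key Lemma, the base case $s=1$ is the lemma with $M = \Fp$, $E = F$. For $s \geq 2$, let $K \subset F$ be the subfield of index $2$: if $d$ is degenerate over $K$, apply the Key Lemma with $M = K$, $E = F$; otherwise $d$ is nondegenerate over $K$ while still being degenerate over $\Fp$, so the inductive hypothesis applied to $K$ gives $V_{K,d} \leq 2^{s-2}$, and the upper bound in Lemma~\ref{Lisa} gives $V_{F,d} \leq 2\,V_{K,d} \leq 2^{s-1}$. The equality claim follows by combining the lower bound $V_{F,d} \geq V_{K,d}$ from Lemma~\ref{Lisa} with $V_{K,d} = 2^{s-1}$ from Remark~\ref{Joshua} in the case that $d$ is degenerate over $K$.

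To prove the Key Lemma, set $q_0 = |M|$ and $n_0 = [M\colon\Fp]$, and normalize $d$ via Remark~\ref{Penelope} so that $d \equiv 1 \pmod{q_0-1}$, writing $d = 1 + j(q_0-1)$. Fix $\gamma \in \grmul{E}$ whose image generates $\grmul{E}/\grmul{M}$ (cyclic of order $q_0+1$), let $\eta = \gamma^{q_0-1}$, and parametrize $\grmul{E} = \{\gamma^i y : 0 \leq i \leq q_0,\, y \in \grmul{M}\}$. Because $y^d = y$ for $y \in \grmul{M}$, the substitution $x = \gamma^i y$ gives $x^d - ax = \gamma^i y(\eta^{ij} - a)$; writing $\psi_E = \psi_M \circ \Tr_{E/M}$ and computing the inner sum over $y \in \grmul{M}$ produces the clean identity
\[
  W_{E,d}(a) = q_0 (M_a - 1), \qquad M_a = \bigl|\{ i \in \{0, \ldots, q_0\} : \Tr_{E/M}(\gamma^i a) = \Tr_{E/M}(\gamma^{id}) \}\bigr|.
\]
Each $A_i = \{a \in E : \Tr_{E/M}(\gamma^i a) = \Tr_{E/M}(\gamma^{id})\}$ is an affine $M$-line inside the $2$-dimensional $M$-vector space $E$; a short check on the operators $\Tr_{E/M}(\gamma^i \cdot) = \gamma^i + \gamma^{iq_0}\sigma$ (with $\sigma$ the nontrivial automorphism of $E/M$) shows two of them share a kernel only when $\eta^{i-i'} = 1$, so the $q_0+1$ lines $A_i$ have pairwise distinct directions.

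Double-counting line incidences (each pair of distinct-direction lines meets in exactly one point) yields $\sum_a (M_a - 1) = q_0$ and $\sum_a (M_a - 1)^2 = q_0^2$, so it suffices to exhibit one $a$ with $M_a = 0$: then $W_{E,d}(a) = -q_0$ and $\valp(W_{E,d}(a)) = n_0$, finishing
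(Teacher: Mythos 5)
Your reduction is sound in outline: the induction on $s$, the use of the upper bound in Lemma~\ref{Lisa} in the non-base cases, and the derivation of the equality claim from the lower bound in Lemma~\ref{Lisa} together with Remark~\ref{Joshua} all work. Note that your Key Lemma is exactly the content of \cite[Corollary 4.4]{Aubry-Katz-Langevin-2015} (in fact the weaker upper-bound half of it), which is what the paper's proof simply cites: the paper takes $L$ to be the smallest subfield of $F$ over which $d$ is nondegenerate, quotes that corollary to get $V_{L,d}=[K\colon\Fp]$ exactly for the index-$2$ subfield $K$ of $L$, and then applies Lemma~\ref{Lisa}. So the entire value of your proposal rests on whether your self-contained proof of the Key Lemma closes, and as written it does not. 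The identity $W_{E,d}(a)=q_0(M_a-1)$, the distinct-directions claim, and the two moment identities $\sum_a(M_a-1)=q_0$ and $\sum_a(M_a-1)^2=q_0^2$ are all correct, but the argument stops at ``it suffices to exhibit one $a$ with $M_a=0$'' without exhibiting one --- and this is precisely the step where the hypothesis that $d$ is nondegenerate over $E$ must enter. The moment identities alone cannot produce such an $a$: the configuration in which all $q_0+1$ lines $A_i$ are concurrent at a single point $a_0$ satisfies both identities (with $M_{a_0}=q_0+1$ and $M_a=1$ for $a\neq a_0$), has $M_a\equiv 1\pmod p$ for every $a$, and corresponds exactly to the degenerate situation $W_{E,d}(a)=\card{E}\,\delta_{a,a_0}$.

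What remains to be proved is the implication: if $d$ is nondegenerate over $E$, then the lines are not all concurrent. Granting that, your moments do finish the job, since if $M_a\geq 1$ for all $a$ then, putting $x_a=M_a-1\geq 0$, one gets $q_0^2=\sum_a x_a^2\leq(\max_a x_a)\sum_a x_a=(\max_a x_a)\,q_0$, forcing some $M_a=q_0+1$ and hence concurrency; so non-concurrency yields an $a$ with $M_a=0$. But ruling out concurrency needs its own argument: concurrency at $a_0$ is equivalent to $W_{E,d}(a)=\card{E}\,\delta_{a,a_0}$, i.e.\ to $\Tr_{E/\Fp}(x^d-a_0x)=0$ for all $x\in E$, i.e.\ to $x\mapsto\Tr_{E/\Fp}(x^d)$ being $\F_p$-linear, and you must show this forces $d$ to be a power of $p$ modulo $\card{E}-1$ --- for instance by a monomial-matching or degree argument in the spirit of the end of the proof of Theorem~\ref{Dorothy}. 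Until that step is supplied, the Key Lemma, and with it your proof of the theorem, is incomplete.
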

\begin{proof}
Let $L$ be the smallest subfield of $F$ (possibly equal to $F$) such that $d$ is not degenerate over $L$, and let $K$ be the unique subfield of $L$ with $[L\colon K]=2$.  Then by \cite[Corollary 4.4]{Aubry-Katz-Langevin-2015}, we have $V_{L,d}=[K\colon\Fp]$.
Lemma \ref{Lisa} then shows that $V_{F,d} \leq [F\colon L] [K\colon\Fp] = \frac{1}{2} [F\colon\Fp]$.
If $d$ is degenerate in the subfield of $F$ of order $\sqrt{q}$, then $L=F$, and so $V_{F,d}=V_{L,d}=[K:\F_p]=\frac{1}{2} [L:\F_p]=\frac{1}{2} [F:\F_p]$.
\end{proof}
The following result shows that if $F$ is as described in Theorem \ref{Deidre}, then there is always an exponent $d$ as described in the same theorem such that the upper bound on $V_{F,d}$ in the theorem is attained.  Note that since there are no nondegenerate exponents over $\F_2$, $\F_3$, and $\F_4$, we need to work with a field having more than four elements to satisfy the hypotheses of Theorem \ref{Deidre}.
\begin{lemma}\label{Nora}
Let $q > 4$ and $[F\colon\Fp]=2^s$ for some $s \geq 1$.  Let $K$ be the unique subfield of $F$ of order $\sqrt{q}$.  Then there is always some positive integer $d$ with $\gcd(d,q-1)=1$ such that $d$ is nondegenerate over $F$ but degenerate over $K$.  For examples of such $d$, we have
\begin{itemize}
\item $d=q-\sqrt{q}-1$ when $\sqrt{q} \pmod{3} \in\{0,1\}$,
\item $d=(q+2)/3$ when $\sqrt{q} \pmod{9} \in \{2,8\}$, and 
\item $d=(2 q+1)/3$ when $\sqrt{q} \pmod{9} \in \{5,8\}$.
\end{itemize}
\end{lemma}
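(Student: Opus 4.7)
The plan is to verify each of the three proposed values of $d$ directly. In every case we must check three things: first, $\gcd(d, q - 1) = 1$; second, that $d$ is degenerate over $K$, meaning $d \equiv p^j \pmod{Q - 1}$ for some integer $j \geq 0$, where $Q = \sqrt q$; and third, that $d$ is nondegenerate over $F$, meaning $d \not\equiv p^j \pmod{q - 1}$ for any $j$. Together these three conditions are exactly what the lemma's statement demands.

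The key technical setup is the factorization $q - 1 = (Q - 1)(Q + 1)$ combined with the Chinese Remainder Theorem: each of the above conditions can be analyzed by reducing $d$ modulo $Q - 1$ and modulo $Q + 1$ separately, after noting that $\gcd(Q-1, Q+1)$ equals $1$ when $p = 2$ and $2$ otherwise. Using $Q \equiv 1 \pmod{Q - 1}$ and $Q \equiv -1 \pmod{Q + 1}$, I would compute that $Q^2 - Q - 1 \equiv -1 \pmod{Q - 1}$ and $\equiv 1 \pmod{Q + 1}$; that $(Q^2 + 2)/3 \equiv 1 \pmod{Q - 1}$ after verifying $3 \nmid Q - 1$, which follows from the hypothesis on $\sqrt q \pmod 9$; and that $(2Q^2 + 1)/3$ similarly reduces to $1$ modulo $Q - 1$. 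From these residues the first condition follows immediately, with the extra check that $d$ is odd when $p$ is odd (which in each case reduces to inspecting $d = Q(Q-1)-1$ or the numerator structure).

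The second condition is handled next. In cases two and three the reduced residue is $1 \pmod{Q - 1}$, so $d$ is trivially a power of $p$ modulo $Q - 1$, and degeneracy over $K$ is automatic. In case one, one needs the arithmetic input that $-1$ is a power of $p$ modulo $Q - 1$; here the hypothesis $\sqrt q \pmod 3 \in \{0, 1\}$ is meant to supply precisely what is needed, by controlling the order of $p$ modulo $Q - 1$ and the position of $-1$ in $\langle p \rangle$.

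The main obstacle is the third condition: ruling out $d$ from the Frobenius orbit of $1$ modulo $q - 1$. The plan is to use the residue of $d$ modulo $Q + 1$, the CRT-complement of $Q - 1$, and to show that this residue cannot equal $p^j \pmod{Q + 1}$ for any $j \geq 0$. Equivalently, since the subgroup $\langle p \rangle \subseteq (\Z/(q-1)\Z)^\times$ has order dividing $n = 2^s$ and factors through the CRT decomposition, one argues that the residue of $d$ modulo $Q + 1$ lies outside the cyclic subgroup generated by $p$ there. The specific $\pmod 9$ conditions in cases two and three are tailored precisely to make the residues of $(Q^2 + 2)/3$ and $(2Q^2 + 1)/3$ modulo $Q + 1$ lie outside the $p$-cyclotomic coset of $1$, completing the verification; this is the step where the case-by-case congruence conditions do the real work.
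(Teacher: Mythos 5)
Your overall framework (verify the three stated conditions for each listed $d$) matches the paper's, but the way you certify nondegeneracy over $F$ is different and, as described, does not work. You propose to show that the residue of $d$ modulo $\sqrt{q}+1$ lies outside the subgroup generated by $p$ in $(\Z/(\sqrt{q}+1)\Z)^\times$. That is a sufficient condition for nondegeneracy, but it is not satisfiable in general: take $p=3$, $q=81$, so $\sqrt{q}=9\equiv 0\pmod{3}$ and $d=q-\sqrt{q}+1=73$. Then $d\equiv 3\pmod{10}$, and $3=3^1$ certainly lies in $\langle 3\rangle\subseteq(\Z/10\Z)^\times$, so your criterion yields nothing, even though $73$ is in fact not a power of $3$ modulo $80$ (those are $1,3,9,27$). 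To push the CRT idea through you would have to match exponents: $d$ is degenerate over $F$ only if there is a \emph{single} $j$ with $d\equiv p^j$ simultaneously modulo $\sqrt{q}-1$ and modulo $\sqrt{q}+1$, and ruling that out requires comparing the congruence classes of $j$ forced by the two moduli --- a step you do not carry out. The paper avoids all of this with a one-line size argument: the least positive residues of the powers of $p$ modulo $q-1$ are exactly $1,p,\dots,p^{n-1}=q/p$, and each listed $d$ satisfies $q/p<d<q-1$ (for $d=(q+2)/3$ with $p=2$ one instead checks $q/4<d<q/2$), so $d$ cannot be congruent to a power of $p$. Your attribution of the mod-$9$ hypotheses to the nondegeneracy step is therefore also off: in the paper they serve only to guarantee $3\nmid d$ (hence $\gcd(d,q-1)=1$) and $3\nmid\sqrt{q}-1$ (so that $3d\equiv 3\pmod{\sqrt{q}-1}$ forces $d\equiv 1$).

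There is a second problem in your first case. The exponent in the first bullet should read $d=q-\sqrt{q}+1$ (the paper's proof uses $+1$; the $-1$ in the statement is a misprint), which gives $d\equiv 1\pmod{\sqrt{q}-1}$ and makes degeneracy over $K$ immediate. Working instead with $q-\sqrt{q}-1\equiv -1\pmod{\sqrt{q}-1}$, you assert that the hypothesis $\sqrt{q}\bmod 3\in\{0,1\}$ forces $-1$ to be a power of $p$ modulo $\sqrt{q}-1$. It does not: for $p=7$, $q=49$ we have $\sqrt{q}=7\equiv 1\pmod{3}$, yet every power of $7$ is $\equiv 1\pmod{6}$, so $-1\equiv 5$ is not among them and $d=41$ is not degenerate over $\F_7$. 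That step of your argument cannot be repaired; the exponent itself has to be corrected, after which degeneracy over $K$ is automatic.
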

\begin{proof}
We prove this result by proving that the above examples satisfy the desired conditions.

First suppose $\sqrt{q} \pmod{3} \in \{0,1\}$ and $d=q-\sqrt{q}+1$.
Note that $d\equiv 1\pmod{\sqrt{q}-1}$, so $d$ is degenerate over $K$ and $\gcd(d,\sqrt{q}-1)=1$.
Also note that $\gcd(d,\sqrt{q}+1)=\gcd(1+1+1,\sqrt{q}+1)=1$ since $3\nmid \sqrt{q}+1$.  Thus $\gcd(d,q-1)=1$.
Finally, note that $q/p < q-\sqrt{q}+1 < q-1$ because $q > 4$ so that it is clear that $d=q-\sqrt{q}+1$ is not a power of $p$ modulo $q-1$.  So $d$ is nondegenerate over $F$.

Now suppose that $\sqrt{q} \pmod{9} \in \{2,8\}$ and $d=(q+2)/3$.
Note that $3 d \equiv 3 \pmod{\sqrt{q}-1}$ and since $3\nmid \sqrt{q}-1$, this means that $d\equiv 1 \pmod{\sqrt{q}-1}$, so $d$ is degenerate over $K$.
Also note that $\gcd(3 d,q-1)=\gcd(3,q-1)$ but $3\nmid d$, so $\gcd(d,q-1)=1$.
If $p \geq 3$, then $q/p < (q+2)/3 < q-1$ because $q > 4$, so it is clear that $d=(q+2)/3$ is not a power of $p$ modulo $q-1$.
And if $p=2$, then $q/4 < (q+2)/3 < q/2 < q-1$ because $q > 4$, so it is clear that $d=(q+2)/3$ is not a power of $p$ modulo $q-1$.  So $d$ is nondegenerate over $F$.

Now suppose that $\sqrt{q} \pmod{9} \in \{5,8\}$ and $d=(2 q+1)/3$.
Note that $3 d \equiv 3 \pmod{\sqrt{q}-1}$ and since $3\nmid \sqrt{q}-1$, this means that $d\equiv 1 \pmod{\sqrt{q}-1}$, so $d$ is degenerate over $K$.
Also note that $\gcd(3 d,q-1)=\gcd(3,q-1)$ but $3\nmid d$, so $\gcd(d,q-1)=1$.
And $q/p < (2 q+1)/3 < q-1$ because $q > 4$, so it is clear that $d=(2 q+1)/3$ is not a power of $p$ modulo $q-1$.  So $d$ is nondegenerate over $F$.
\end{proof}

\section{Exponents that are Nondegenerate over the Prime Subfield}\label{Matilda}

In this section, we obtain upper bounds on $V_{F,d}$ when $d$ is nondegenerate over the prime subfield $\Fp$ of $F$.  (This is part \eqref{Nelson} of Theorem \ref{Cecilia}.)  Our results here also show that these upper bounds are attained in infinitely many of the fields under consideration: see Remark \ref{Francis} for details.  We prove our bounds in Theorem \ref{Natasha}, which itself is based on a few other results (Propositions \ref{Angela}--\ref{Samuel} and Lemma \ref{Nancy}), which in turn are based on further technical results (Lemmata \ref{Peter}--\ref{Raphael}).  To show the motivation for the various results, we present Theorem \ref{Natasha} first, then proceed to the intermediate results upon which our proof of Theorem \ref{Natasha} directly depends, and conclude with proofs of the tributary claims that we use to establish the intermediate results.  The lemmata in this section also show that the upper bound in Theorem \ref{Natasha} is met infinitely often (cf.~Remark \ref{Francis}).
\begin{theorem}\label{Natasha}
Let $d$ be a positive integer with $\gcd(d,q-1)=1$ and $d\not\equiv 1\pmod{p-1}$.  If $d \equiv (p+1)/2 \pmod{p-1}$ (which can only happen if $p\equiv 1 \pmod{4}$) and if $n$ is odd, then $V_{F,d} \leq \frac{1}{2} [F\colon\Fp]$.  Otherwise $V_{F,d} \leq \frac{1}{p-1}\ceil{\frac{p-1}{3}} [F\colon\Fp]$.
\end{theorem}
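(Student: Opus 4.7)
The plan is to apply Stickelberger's Theorem via Lemma~\ref{Stanley}, which reduces the problem to exhibiting, for each admissible $d$, a nonzero $a\in\Z/(p^n-1)\Z$ with $\wt_{p,n}(a)+\wt_{p,n}(-da)$ meeting the claimed bound on $(p-1)V_{F,d}$.  The workhorse construction is the one-parameter family $a=cr$, where $r=(p^n-1)/(p-1)=1+p+\cdots+p^{n-1}$ has all base-$p$ digits equal to $1$; for $c\in\{1,\ldots,p-2\}$ this gives $\wt_{p,n}(cr)=nc$.  Since $r\cdot p\equiv r\pmod{p^n-1}$, writing $d^*:=d\bmod(p-1)$ yields $d\cdot r\equiv d^*\cdot r\pmod{p^n-1}$, so with $e:=d^*c\bmod(p-1)$ (nonzero because $\gcd(d^*,p-1)=1$) one has $-d\cdot cr\equiv(p-1-e)r$ and $\wt_{p,n}(-d\cdot cr)=n(p-1-e)$.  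Hence $V_{F,d}\le n(c+p-1-e)/(p-1)$ for any such $c$.

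For the first clause of the theorem ($d^*=(p+1)/2$ with $n$ odd), taking $c=1$ gives $e=(p+1)/2$ and weight sum $n(p-1)/2$, so $V_{F,d}\le n/2$; this in fact holds for any $n$.  For the main bound $V_{F,d}\le\lceil(p-1)/3\rceil n/(p-1)$, the task becomes: for each $d^*\in\{2,\ldots,p-2\}$ coprime to $p-1$ (and, when $d^*=(p+1)/2$, under the extra hypothesis $n$ even), find $a$ with weight sum at most $n\lceil(p-1)/3\rceil$.  The strategy is to partition the range of $d^*$ into arithmetic intervals and choose a tailored $c$ in each interval so that $c+(p-1-e)\le\lceil(p-1)/3\rceil$: for $d^*$ close to $p-1$ take $c=1$; for $d^*$ near $(p-1)/2$ take $c=2$; for smaller $d^*$ take a larger $c$ arranged so that $d^*c$ lies just below a multiple of $p-1$.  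The fineness of this subdivision depends on $p\bmod 3$ and on $n\bmod 2$, mirroring the case split in Remark~\ref{Francis} and in the subsidiary Propositions~\ref{Angela}--\ref{Samuel}.

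The hard part will be the exponents for which the one-parameter family $a=cr$ alone cannot reach the $\lceil(p-1)/3\rceil$ bound---in particular $d^*=(p+1)/2$ with even $n$, as well as those intermediate $d^*$ for which the gap $c+p-1-e$ is stuck above $\lceil(p-1)/3\rceil$ for every valid $c$.  For these cases the candidate pool must be enriched in one of two ways: either use two-term candidates $a=cr+bp^j$ with small $b,j$, chosen so that no base-$p$ carry occurs (so that $\wt_{p,n}$ remains exactly additive) and so that the correction $bp^j$ pulls $-da$ into a low-weight residue class; or invoke Lemma~\ref{Lisa} to descend to a proper subfield and leverage a smaller-$n$ instance of the theorem.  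Piecing these constructions together so as to cover every admissible $d$ uniformly is the technical crux, and the need to exclude odd $n$ from the general bound in the first clause reflects precisely the situation where neither device yields an improvement beyond $n/2$.
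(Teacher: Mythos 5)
Your architecture is essentially the paper's: reduce to the prime field and apply Stickelberger via Lemma \ref{Stanley}. Your repunit family $a=cr$ with $r=(p^n-1)/(p-1)$ is exactly the Stickelberger-level incarnation of the inequality $V_{F,d}\leq [F\colon\Fp]\,V_{\Fp,d}$ from Lemma \ref{Lisa}, and your first clause (take $c=1$ when $d\equiv(p+1)/2$) is correct. But the proposal stops precisely where the real work begins, and both of the pieces you defer as ``the technical crux'' contain genuine difficulties that your sketch does not resolve. First, the interval-by-interval search for a good $c$: your prescription ``for $d^*$ near $(p-1)/2$ take $c=2$'' works only for $d^*$ in $((p-1)/3,(p-1)/2)$; for $d^*$ in $((p-1)/2,2(p-1)/3)$ the choice $c=2$ sends $d^*c$ just \emph{above} a multiple of $p-1$, which is the wrong side. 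The paper (Propositions \ref{Angela} and \ref{Bernard}) handles this range by writing $d^*=\frac{p-1}{2}+e$, performing a secondary division to choose $c=2b+1$, and verifying the inequality $(e-1)s\leq(2e-3)\frac{p-1}{6}$ (with boundary cases such as $d^*\in\{(p+2)/3,(2p+1)/3,3\}$ and $p=19$, $d^*\in\{5,11\}$ meeting the bound with equality). None of this is routine, and until it is done you have not shown that a suitable $c$ exists for every admissible $d^*$.

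Second, the case $d\equiv(p+1)/2\pmod{p-1}$ with $n$ even cannot be fixed the way you suggest. Descending to a proper subfield and ``leveraging a smaller-$n$ instance of the theorem'' is circular: the smallest even instance $n=2$ \emph{is} the hard case, and Lemma \ref{Nancy} shows $V_{\Fp,d}=1/2$ exactly, so no repunit candidate can do better than $n/2$. The paper's resolution (Proposition \ref{Samuel}) is a genuinely two-digit construction in $\Z/(p^2-1)\Z$ --- candidates such as $a=p+4$, $a=p+2$, $a=1$ depending on where $d$ falls --- which achieves only $V_{\F_{p^2},d}\leq 1/2$, \emph{not} the $\ceil{(p-1)/3}$ bound over $\F_{p^2}$; the theorem's bound is then recovered because $\tfrac12\cdot[F\colon\F_{p^2}]=\tfrac n4<\tfrac n3\leq\tfrac{1}{p-1}\ceil{\tfrac{p-1}{3}}n$. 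Your sketch of ``two-term candidates $a=cr+bp^j$'' gestures in this direction but specifies neither the candidates nor the key point that reaching $1/2$ over the quadratic subfield suffices; you would also need to dispose separately of $p=5$ (where $\ceil{(p-1)/3}/(p-1)=1/2$ and nothing more is needed) before assuming $p\geq 13$ for the two-digit construction. As it stands, the proposal is a correct plan with the two load-bearing verifications missing.
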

\begin{proof}
Our conditions on $d$ imply that $p \geq 5$.
If $d\not\equiv (p+1)/2 \pmod{p-1}$, then Propositions \ref{Angela} and \ref{Bernard} below show that $V_{\Fp,d} \leq \frac{1}{p-1} \ceil{\frac{p-1}{3}}$, and Lemma \ref{Lisa} shows that $V_{F,d} \leq V_{\Fp,d} \cdot [F\colon\Fp]$, thus securing our bound.

So suppose $d\equiv (p+1)/2 \pmod{p-1}$ henceforth.  We must have $p\equiv 1 \pmod{4}$, since otherwise $d$ would be even, hence not coprime to $q-1$.  By Lemma \ref{Nancy} below, we have $V_{\Fp,d}=1/2$, and Lemma \ref{Lisa} shows that $V_{F,d} \leq V_{\Fp,d} \cdot [F\colon\Fp]$, thus securing the desired bound when $n$ is odd, and also the desired bound when $n$ is even and $p=5$, since $\frac{1}{5-1} \ceil{\frac{5-1}{3}} = \frac{1}{2}$.

So suppose that $n$ is even and $p > 5$ henceforth.  Since $p\equiv 1\pmod{4}$, this means that $p\geq 13$.  So Proposition \ref{Samuel} below shows that $V_{\F_{p^2},d} \leq 1/2$.  Then we use Lemma \ref{Lisa} to see that $V_{F,d} \leq \frac{1}{2} [F\colon\F_{p^2}] = \frac{1}{4} [F\colon\Fp] < \frac{1}{p-1} \ceil{\frac{p-1}{3}} [F\colon\Fp]$.
\end{proof}
Now we prove four results upon which our proof of Theorem \ref{Natasha} depends.
\begin{proposition}\label{Angela}
Let $F$ be a prime field of order $p$ with $p \equiv 1 \pmod{3}$.  If $d$ is a positive integer with $\gcd(d,p-1)=1$ and $d\not\equiv 1 \pmod{(p-1)/2}$, then $V_{F,d} \leq 1/3$.  Equality is achieved if and only if one of the following holds:
\begin{itemize}
\item $p\not\equiv 7 \pmod{9}$ and $d \equiv (p+2)/3 \pmod{p-1}$; or
\item $p\not\equiv 4 \pmod{9}$ and $d \equiv (2 p+1)/3 \pmod{p-1}$; or
\item $p=19$ and $d \equiv 5$ or $11 \pmod{18}$.
\end{itemize}
\end{proposition}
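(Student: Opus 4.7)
My plan is to apply Lemma~\ref{Stanley} to recast $V_{F,d}$ as a combinatorial minimum on $\Z/(p-1)\Z$. Since $F=\F_p$ is prime, the $p$-ary weight $\wt_{p,1}(a)$ equals $a$ for $a\in\{0,\ldots,p-2\}$, so
\[
V_{F,d}=\frac{m}{p-1},\quad m=\min_{a\in\{1,\ldots,p-2\}}\bigl[\,a+(-da\bmod(p-1))\,\bigr].
\]
I parameterize by $g=\gcd(d-1,p-1)$, $h=(p-1)/g$, and $v=e^{-1}\bmod h$ with $e=(d-1)/g$; the hypothesis $d\not\equiv 1\pmod{(p-1)/2}$ forces $h\geq 3$, and since $d$ is coprime to the even number $p-1$ (so $d$ is odd), $g$ must be even.

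The structural heart of the argument is the identification $m=i^*\cdot g$, where $i^*$ is the smallest positive integer with $(iv)\bmod h>h-ig$. This comes from observing that for $a$ with $r:=da\bmod(p-1)>a$, $s(a):=a+(-da\bmod(p-1))=(p-1)-((d-1)a\bmod(p-1))$, while $s(a)\geq p-1$ otherwise; the residue $(d-1)a\bmod(p-1)$ takes values $\{0,g,\ldots,(h-1)g\}$, the smallest $a$ realizing $jg$ is $jv\bmod h$, and the ``$r>a$'' condition converts via $i=h-j$ to the claimed inequality. Moreover, coprimality $\gcd(d,p-1)=1$ translates, using $dv\equiv v+g\pmod h$, to $\gcd(v+g,h)=1$. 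Since $V_{F,d}=i^*/h$, the inequality reduces to $3i^*\leq h$ and equality $V_{F,d}=1/3$ requires $i^*=h/3$, hence $3\mid h$.

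For the inequality I split on $g$. When $g\geq 4$, choosing $i=\lceil h/g\rceil$ makes $h-ig\leq 0$ so the defining condition holds automatically; the bound $\lceil h/g\rceil\leq h/3$ then follows by a short case check on the possible pairs $(g,h)$ that can arise. When $g=2$, the coprimality $\gcd(v+2,h)=1$ is exactly what excludes the problematic $v$-values for which $i^*$ would exceed $h/3$---for instance, at $h=3$ the bad $v=1$ gives $d=3$, which fails coprimality with $p-1=6$. Completing this case analysis for $g=2$, verifying that coprimality is tight enough to push $i^*$ down to $\leq h/3$, is the main technical obstacle of the inequality portion.

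For the equality characterization $i^*=h/3$ (with $3\mid h$), I argue by cases on $h$. When $h=3$, $g=(p-1)/3$ and $d\in\{(p+2)/3,(2p+1)/3\}$; the coprimality $\gcd(1+g,3)=1$ reduces to $p\not\equiv 7\pmod 9$ or $p\not\equiv 4\pmod 9$ for the two $d$'s, and when either holds $h-g\leq 0$ for $p\geq 7$ gives $i^*=1=h/3$, yielding the first two listed cases. When $h=3k$ with $k\geq 2$, equality demands the condition $(iv)\bmod h>h-ig$ to fail at $i=1,\ldots,k-1$ and hold at $i=k$; combined with $\gcd(v+g,h)=1$, this system turns out to be simultaneously satisfiable only when $h=9$, $g=2$, $v\in\{2,5\}$, forcing $p-1=gh=18$ (so $p=19$) and $d\in\{5,11\}$---the third listed case. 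The main obstacle is this last classification for $k\geq 2$: systematically checking that as $h$ and $g$ grow, the feasible region for $v$ shrinks to empty except for the lone sporadic configuration at $p=19$.
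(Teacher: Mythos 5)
Your reduction is correct and is a genuinely different framework from the paper's. You parameterize by $g=\gcd(d-1,p-1)$, $h=(p-1)/g$, and $v=e^{-1}\bmod h$, and convert $V_{F,d}$ into $i^*/h$, where $i^*$ is the least $i$ with $(iv)\bmod h> h-ig$; I checked this identity, the translation of $\gcd(d,p-1)=1$ into $\gcd(v+g,h)=1$, the $g\ge 4$ argument, and the $h=3$ equality cases, and they are all sound. The paper instead works directly with the size of $d$ relative to $u=p-1$, splitting into ranges ($d<u/3$, $u/3<d<u/2$, $u/2<d<2u/3$, $d>2u/3$) and exhibiting an explicit witness $a$ in each range for the minimum in Lemma \ref{Stanley} (the quotient $\lfloor u/d\rfloor$, or $a=3$, or $a=2b+1$ for a suitable $b$).

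However, what you have written is a correct reduction plus a plan, not a proof: the two hardest steps are asserted rather than established. First, for $g=2$ --- the generic case, since then $h=(p-1)/2$ and this covers all $d$ with $\gcd(d-1,p-1)=2$ --- you say that verifying $i^*\le h/3$ ``is the main technical obstacle'' and give no argument. Showing that the least $i$ with $(iv\bmod h)+2i>h$ is at most $h/3$ for every admissible $v$ is a nontrivial Diophantine claim; it is exactly where the paper needs its most delicate witness, $a=2b+1$ with $b=\lfloor (u-d)/(2e)\rfloor$ in the range $u/2+2\le d<2u/3$, and nothing in your write-up substitutes for that. Second, for the equality classification with $h=3k$, $k\ge 2$, you assert that the system of conditions is satisfiable only for $h=9$, $g=2$, $v\in\{2,5\}$ (yielding $p=19$, $d\in\{5,11\}$) without any argument; this classification is precisely where the sporadic cases come from and cannot be waved through. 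Until these two steps are carried out, the proposed proof has a genuine gap covering most exponents $d$ and the entire ``only if'' direction of the equality statement beyond $h=3$.
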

\begin{proof}
The conditions on $p$ and $d$ imply that $p \geq 7$ and $d \geq 5$.
And we may assume that $d < p-1$ by replacing it with its remainder upon division by $p-1$.
We shall use Lemma \ref{Stanley} to bound $V_{F,d}$.
Let $\wt=\wt_{p,1}$ be the $p$-ary weight function on $\Z/(p-1)\Z$ as defined in \eqref{Walter}, and note that for any $a \in \Z/(p-1)\Z$, the value of $\wt(a)$ is equal to the unique integer in $\{0,1,\ldots,p-2\}$ that is congruent to $a$ modulo $p-1$.
We use the convention that if $z \in \Z$, then $\wt(z)$ is a shorthand for $\wt(\bar{z})$, where $\bar{z} \in \Z/(p-1)\Z$ is the reduction modulo $p-1$ of $z$.

Now write $u=p-1$ and note that $u/6$ is an integer since $p\equiv 1 \pmod{6}$ and $u \geq 6$ because $p\geq 7$.  Write $u=d a+ r$ with a positive integral quotient $a$ and remainder $r \in \{1,\ldots,d-1\}$, so then $a=(u-r)/d$.
Then $r\equiv -d a \pmod{u}$, and so
\[
\wt(a)+\wt(-d a) = \wt(a)+\wt(r) = a+r = \frac{u-r}{d}+r.
\]
Lemma \ref{Stanley} will prove our desired bound ($V_{F,d} \leq 1/3$) if we show that
\[
\frac{u-r}{d}+r  \leq \frac{u}{3},
\]
which is equivalent to
\[
(d-1) r \leq (d-3) \frac{u}{3}.
\]
If we have $d < (u/3)-2$, then since $r \leq d-1$ and $d \geq 5$, we have $(d-1) r \leq (d-1)^2 \leq (d-3)(d+3) \leq (d-3)(u/3)$, which means that our inequality is satisfied, and equality is not actually possible (we would need $d=5$ and $u/3=d+3$, which would force $u=24$, that is, $p=25$, which is absurd).

We cannot have $d=(u/3)-2$, since that would make $d$ even, hence not coprime to $u$.

If we have $d=(u/3)-1$, then note that $p \geq 19$ since we have $d \geq 5$.
Furthermore, we have $-3 d \equiv 3 \pmod{u}$, so then
\[
\wt(3)+\wt(-3 d) = 3 + 3 = 6,
\]
and this is less than or equal to $u/3$ because $p\geq 19$ (with equality only when $p=19$ and $d=5$).

We cannot have $d=u/3$, for then $\gcd(d,u)=u/3 > 1$.

If we have $(u/3)+1 \leq d < u/2$, then $a=2$ and $r=u-2 d$, so then $\wt(a)+\wt(r)=2+u-2 d \leq u/3$ with equality if and only if $d=(u/3)+1=(p+2)/3$.

We cannot have $d=u/2$, for then $\gcd(d,u)=u/2 > 1$.

We cannot have $d=(u/2)+1=(p+1)/2$, because we are assuming $d \not\equiv 1\pmod{(p-1)/2}$.

If we have $(u/2)+2 \leq d < 2 u/3$, then write $d=(u/2)+e$, so that $2 \leq e < u/6$.
And write $u-d=2 e b + s$ with $0 \leq s < 2 e$, so that $b=\frac{(u/2)-e-s}{2 e} \leq \frac{u}{8}-\frac{1}{2}$.  Since $2 d \equiv 2 e \pmod{u}$, we have $-d(2 b+1)\equiv -2 b e -d \equiv s \pmod{u}$, and then
\[
\wt(2 b+1) + \wt(-d(2 b+1)) = \wt(2 b+1) + \wt(s) = 2 b + 1 + s
\]
because $2 b+1 \leq \frac{u}{4} = \frac{p-1}{4} < p-1$ and $s < 2 e < \frac{u}{3}=\frac{p-1}{3} < p-1$.
So to prove our upper bound on $V_{\Fp,d}$ it suffices to show that $2 b+1+s \leq u/3$, that is,
\[
\frac{(u/2)-s}{e} + s \leq \frac{u}{3},
\]
or equivalently,
\[
(e-1) s \leq (2 e-3) \frac{u}{6}.
\]
This is in fact always satisfied: since $s \leq 2 e-1$ and $2 \leq e < \frac{u}{6}$, we have $(e-1) s \leq (e-1) (2 e-1) \leq (2 e-3)(e+1) \leq (2 e-3) (u/6)$, and we can only have exact equality if $2=e=(u/6)-1$, that is, if $p=19$ and $d=(u/2)+e=11$.

We cannot have $d=2 u/3$, for then $\gcd(d,u)=u/3 > 1$.

If we have $d \geq (2 u/3)+1$, then $a=1$ and $r=u-d$ and $\wt(a)+\wt(r)=1+u-d \leq u/3$, with exact equality only if $d=(2 u/3)+1=(2 p+1)/3$.

Thus we have proved our upper bound on $V_{\Fp,d}$ for all values of $d$, and have shown that equality can be achieved only if $d=(u/3)+1=(p+2)/3$ or $d=(2 u/3)+1=(2 p+1)/3$ or else if $p=19$ and $d \in \{5,11\}$.  For the former two cases, Lemmata \ref{Peter} and \ref{Paul} tell us precisely when $\gcd(d,p-1)=1$, and show that $V_{\Fp,d}=1/3$ in these cases.
And it is easy to check that $V_{\F_{19},5}=V_{\F_{19},11}=1/3$ by direct computation of the value $m$ from Lemma \ref{Stanley}.
\end{proof}

\begin{proposition}\label{Bernard}
Let $F$ be a prime field of order $p$ with $p$ odd and $p \equiv 2 \pmod{3}$.
 If $d$ is a positive integer with $\gcd(d,p-1)=1$ and $d\not\equiv 1 \pmod{(p-1)/2}$, then $V_{F,d} \leq \frac{p+1}{3(p-1)}$.  Equality is achieved if and only if one of the following holds:
\begin{itemize}
\item $d \equiv 3 \pmod{p-1}$, or
\item $d \equiv (2p-1)/3 \pmod{p-1}$.
\end{itemize}
\end{proposition}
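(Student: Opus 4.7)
The plan is to parallel the proof of Proposition \ref{Angela}, invoking Lemma \ref{Stanley} to reduce the claim to a combinatorial statement about the $p$-ary weight $\wt = \wt_{p,1}$ on $\Z/u\Z$, where $u = p-1$. The structural difference from Proposition \ref{Angela} is that here $u \equiv 1 \pmod 3$, so the target bound becomes $(u+2)/3 = (p+1)/3$ rather than $u/3$. For each admissible $d$, I would produce a multiplier $a$ with $\wt(a) + \wt(-da) \leq (u+2)/3$, and determine when equality holds. Since $\wt(x)$ is the least nonnegative representative of $x$ modulo $u$, when $u = da + r$ with $0 \leq r < d$ we have $\wt(a) + \wt(-da) = a + r$, and the target inequality rearranges to $r \leq a(d-3)/2 + 1$.

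For the small-$d$ regime I would take the quotient $a = \lfloor u/d \rfloor$. When $d = 3$, the congruence $u \equiv 1 \pmod 3$ forces $r = 1$ and $a = (u-1)/3$, yielding sum $(u+2)/3$ and the first equality case. For $5 \leq d \leq (u-1)/3$ the quotient satisfies $a \geq 3$, and then $r \leq d - 1 \leq (3d-7)/2 \leq a(d-3)/2 + 1$ secures the bound; equality here would force $(a, d, r) = (3, 5, 4)$ and thus $u = 19$, which is never $p - 1$ for an odd prime. For $(u+2)/3 \leq d \leq u/2 - 1$, I would switch to the multiplier $a = 2$, giving $\wt(2) + \wt(-2d) = 2 + (u - 2d) \leq (u+2)/3$ throughout, with equality only at $d = (u+2)/3 = (p+1)/3$, which is always even (since $p \equiv 5 \pmod 6$) and hence not coprime to $u$. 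The values $d = u/2$ and $d = (p+1)/2$ are excluded by coprimality and by hypothesis respectively.

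For the central range $u/2 + 2 \leq d \leq (2u-2)/3$ I would import the odd-multiplier trick from the corresponding range in Proposition \ref{Angela}: write $d = u/2 + e$ with $2 \leq e \leq (u-4)/6$, and $u/2 - e = 2eb + s$ with $0 \leq s < 2e$; the same identity $-d(2b+1) \equiv s \pmod u$ derived in Proposition \ref{Angela} then gives sum $2b + 1 + s = (u/2 - s)/e + s$. Substituting $s \leq 2e - 1$ and simplifying reduces the bound to $e \leq (u+2)/6$, which is automatic throughout this range, with the inequality strict since $e \leq (u-4)/6$. For the top range $(2u+1)/3 \leq d \leq u - 1$, the multiplier $a = 1$ gives sum $1 + u - d \leq (u+2)/3$, with equality at $d = (2u+1)/3$, yielding the second equality case.

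Collecting, the equality values are $d \equiv 3 \pmod{u}$ and $d \equiv (2p-1)/3 \pmod{u}$; both are coprime to $u$ (the second as the modular inverse of $3$), and neither reduces to $1$ modulo $(p-1)/2$ once $p \geq 11$, while $p = 5$ admits no $d$ meeting the hypotheses. The main obstacle is the central range $u/2 < d < (2u+1)/3$, where neither $a = 1$ nor $a = 2$ gives the bound; reproducing the odd-multiplier identity of Proposition \ref{Angela}, verifying the correct bound on $e$, and tracing how equality at the boundary $d = (2u+1)/3$ is recovered by the $a = 1$ case in the final range, constitutes the most intricate step.
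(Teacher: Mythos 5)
Your overall strategy is the same as the paper's: reduce to Lemma \ref{Stanley}, split the residue of $d$ modulo $u=p-1$ into ranges, and in each range exhibit a witness $a$ with $\wt(a)+\wt(-da)\leq (u+2)/3$. Your range-by-range arithmetic checks out, including the uniform treatment of $3\leq d\leq (u-1)/3$ via the true quotient $a=\lfloor u/d\rfloor\geq 3$ (which absorbs the boundary case $d=(u-1)/3$ that the paper handles separately) and the odd-multiplier computation in the central range, which does reduce to $e\leq (u+2)/6$ and is strict there because $e\leq (u-4)/6$. This correctly establishes the upper bound $V_{F,d}\leq (p+1)/(3(p-1))$ and the ``only if'' half of the equality claim: every admissible $d$ other than $3$ and $(2p-1)/3$ admits a witness with strictly smaller weight sum.

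The gap is in the ``if'' half. For $d\equiv 3$ and $d\equiv (2p-1)/3\pmod{p-1}$ you produce a single witness $a$ whose weight sum is exactly $(u+2)/3$, but a witness can only bound the minimum in Lemma \ref{Stanley} from above: to conclude that equality is \emph{achieved} you must also rule out the existence of some other nonzero $a'\in\Z/(p-1)\Z$ with $\wt(a')+\wt(-da')<(u+2)/3$, i.e., you need a matching lower bound. Your closing paragraph checks only that these two exponents are admissible (coprime to $p-1$, not congruent to $1$ modulo $(p-1)/2$), which is necessary but does not address this. The paper fills exactly this hole with Lemma \ref{Raphael}, which computes $V_{F,3}$ exactly --- showing that any minimizing $a$ must have all its $p$-ary digits at most $(p-2)/3$, whence $\wt(a)+\wt(-3a)=n(p-1)-2\wt(a)\geq n(p+1)/3$ --- and which transfers the same value to the inverse exponent $(2p-1)/3$ via Remark \ref{Ignatius}. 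You should either cite Lemma \ref{Raphael} or supply an equivalent lower-bound argument for $d=3$; without it the characterization of equality is only proved in one direction.
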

\begin{proof}
The conditions on $p$ and $d$ imply that $p \geq 5$ and $d \geq 3$.
If $d=3$, then Lemma \ref{Raphael} below shows that $V_{\Fp,3}=(p+1)/(3(p-1))$.  So we assume that $d \geq 5$ henceforth.
And we may assume that $d < p-1$ by replacing it with its remainder upon division by $p-1$.
We shall use Lemma \ref{Stanley} to bound $V_{F,d}$.
Let $\wt=\wt_{p,1}$ be the $p$-ary weight function on $\Z/(p-1)\Z$ as defined in \eqref{Walter}, and note that for any $a \in \Z/(p-1)\Z$, the value of $\wt(a)$ is equal to the unique integer in $\{0,1,\ldots,p-2\}$ that is congruent to $a$ modulo $p-1$.
We use the convention that if $z \in \Z$, then $\wt(z)$ is a shorthand for $\wt(\bar{z})$, where $\bar{z} \in \Z/(p-1)\Z$ is the reduction modulo $p-1$ of $z$.

Now write $u=p-1$ and note that $(u+2)/6$ is an integer since $p\equiv 5 \pmod{6}$ and $u \geq 4$ because $p\geq 5$.  Write $u=d a+ r$ with a positive integral quotient $a$ and remainder $r \in \{1,\ldots,d-1\}$, so then $a=(u-r)/d$.
Then $r\equiv -d a \pmod{u}$, and so
\[
\wt(a)+\wt(-d a)=\wt(a)+\wt(r)=a+r=\frac{u-r}{d}+r.
\]
Lemma \ref{Stanley} will prove our desired bound ($V_{F,d} \leq \frac{p+1}{3(p-1)}=\frac{u+2}{3(p-1)}$) if we show that
\[
\frac{u-r}{d}+r  \leq \frac{u+2}{3},
\]
which is equivalent to
\[
(d-1) r -2 \leq (d-3) \frac{u+2}{3}.
\]
If we have $d < (u-1)/3$, then since $r \leq d-1$ and $d \geq 5$, we have $(d-1) r -2\leq (d-1)^2 -2 \leq (d-3)(d+2) \leq (d-3) (u+2)/3$, which means that our inequality is satisfied, and equality is not actually possible. (We would need $d+2=(u+2)/3$, which would make $d$ even, hence not coprime to $p-1$.)

If we have $d=(u-1)/3$, then note that $p \geq 17$ since we have $d \geq 5$.
Furthermore, we have $-3 d \equiv 1 \pmod{u}$, so then
\[
\wt(3)+\wt(-3 d) = 3 + 1 = 4,
\]
and this is strictly less than $(u+2)/3$ since $p\geq 17$.

If we have $(u+2)/3 \leq d < u/2$, then $a=2$ and $r=u-2 d$, so then $\wt(a)+\wt(r)=2+u-2 d \leq (u+2)/3$ and we could only get equality if $d=(u+2)/3=(p+1)/3$, but this is impossible, since it would make $d$ even, hence not coprime to $u$.

We cannot have $d=u/2$, for then $\gcd(d,u)=u/2 > 1$.

We cannot have $d=(u/2)+1=(p+1)/2$, because we are assuming $d \not\equiv 1\pmod{(p-1)/2}$.

If $(u/2)+2 \leq d \leq 2(u-1)/3$, then write $d=(u/2)+e$, so that $2 \leq e \leq (u-4)/6$.
And write $u-d=2 e b + s$ with $0 \leq s < 2 e$, so that $b=\frac{(u/2)-e-s}{2 e}$.  Note that $b \leq \frac{u}{8} - \frac{1}{2}$.  Since $2 d \equiv 2 e \pmod{p-1}$, we have $-d(2 b+1)\equiv -2 b e -d \equiv s \pmod{p-1}$.
Thus
\[
\wt(2 b+1)+\wt(-d(2 b+1)) = \wt(2 b+1) + \wt(s) = 2 b + 1 + s,
\]
since $2 b+1 \leq \frac{u}{4} = \frac{p-1}{4} < p-1$ and $s < 2 e \leq (u-4)/3 = (p-5)/3 < p-1$.
So to prove our upper bound, it suffices to show that $2 b+1+s \leq (u+2)/3$, that is,
\[
\frac{(u/2)-s}{e} + s \leq \frac{u+2}{3},
\]
or equivalently,
\[
(e-1) s -1 \leq (2 e-3) \frac{u+2}{6}.
\]
This is in fact always satisfied strictly: since $s \leq 2 e-1$ and $2 \leq e \leq (u-4)/6$, we have $(e-1) s -1 \leq (e-1) (2 e-1) -1 = (2 e-3)e < (2 e-3) (u+2)/6$.

If we have $d \geq (2 u+1)/3$, then $a=1$ and $r=u-d$, so $\wt(a)+\wt(r)=1+u-d \leq (u+2)/3$, with equality possible only if $d=(2 u+1)/3=(2 p-1)/3$.

Thus we have proved our upper bound on $V_{\Fp,d}$ for all values of $d$, and have shown that equality can be achieved only if $d=3$ or $(2 p-1)/3$.
And Lemma \ref{Raphael} shows that in both of these cases we have $\gcd(d,p-1)=1$ and $V_{\Fp,d}=(p+1)/(3(p-1))$.
\end{proof}

\begin{proposition}\label{Samuel}
Let $p$ be a prime with $p \equiv 1 \pmod{4}$ and $p \geq 13$, let $F$ be the finite field of order $p^2$, and let $d$ be a positive integer with $d \equiv (p+1)/2 \pmod{p-1}$ and $\gcd(d,p^2-1)=1$.  Then $V_{F,d} \leq 1/2$.
\end{proposition}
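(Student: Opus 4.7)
My plan is to apply Lemma \ref{Stanley}: it suffices to exhibit, for each admissible $d$, a nonzero $a \in \Z/(p^2-1)\Z$ with $\wt_{p,2}(a) + \wt_{p,2}(-da) \leq (p-1)/2$, from which $V_{F,d} \leq 1/2$ follows. I will use throughout the elementary identity $\wt_{p,2}(-x) = 2(p-1) - \wt_{p,2}(x)$ for nonzero $x \in \Z/(p^2-1)\Z$, which comes from subtracting $x$ from $p^2-1$ digit by digit in base $p$.

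First I parametrize $d$. Since $d \equiv (p+1)/2 \pmod{p-1}$, there is a unique $k \in \{0, 1, \ldots, p\}$ with $d \equiv (p+1)/2 + k(p-1) \pmod{p^2-1}$, and the coprimality hypothesis forces $k \notin \{0, (p+1)/2\}$, since both values give $(p+1)/2 \mid d$. A short base-$p$ computation shows that $d$ has digits $((p+1)/2 - k, k)$ and weight $(p+1)/2$ when $k$ lies in the ``bad range'' $\{1, \ldots, (p-1)/2\}$, and digits $((3p+1)/2 - k,\, k - 1)$ and weight $(3p-1)/2$ when $k$ lies in the ``good range'' $\{(p+3)/2, \ldots, p\}$.

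I then split the argument into three sub-cases. For $k$ in the good range, I take $a = 1$: immediately $\wt_{p,2}(-d) = 2(p-1) - (3p-1)/2 = (p-3)/2$, for total $(p-1)/2$. For $k \in \{2, \ldots, (p-3)/2\}$ I take $a = 2 + p$; the reduction
\[
da \equiv (d_0 a_0 + d_1 a_1) + (d_0 a_1 + d_1 a_0)\, p \pmod{p^2-1}
\]
gives two coefficients $p+1-k$ and $(p+1)/2 + k$, both of which lie in $[(p+5)/2,\, p-1]$ throughout this range, so no carries occur and $\wt_{p,2}(da) = 3(p+1)/2$; total $= 3 + (p-7)/2 = (p-1)/2$. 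For the remaining bad-range cases $k \in \{1,\, (p-1)/2\}$ I take $a = 4 + p$: here exactly one coefficient equals $2p - 1$ and the other $(p+7)/2$, producing a single carry and final base-$p$ digits $(p-1,\, (p+9)/2)$ (when $k = 1$) or $((p+9)/2,\, p-1)$ (when $k = (p-1)/2$), of weight $(3p+7)/2$; total $= 5 + (p-11)/2 = (p-1)/2$.

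The hypothesis $p \geq 13$ enters precisely in this last sub-case, to guarantee $(p+9)/2 < p$ (so the carried digit stays in the valid range $\{0,\ldots,p-1\}$) and $(p-11)/2 \geq 0$ (so the final weight count is nonnegative). Since every admissible $k$ lies in exactly one of the three sub-ranges, the bound $V_{F,d} \leq 1/2$ follows uniformly. The main obstacle is not conceptual but bookkeeping: recognizing the right $a$ for each sub-range and carefully tracking the carry pattern of $da$ in base $p$, each check being short once the parametrization is fixed.
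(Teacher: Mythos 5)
Your proof is correct and follows essentially the same route as the paper's: both apply Lemma \ref{Stanley} after writing $d\equiv\frac{p+1}{2}+k(p-1)\pmod{p^2-1}$, and both use the test elements $a=1$, $a=p+2$, and $a=p+4$ on the same ranges of $k$ (your only deviation is handling $k=\frac{p-1}{2}$ by direct computation with $a=p+4$ rather than reducing it to the $k=1$ case via multiplication by $p$ and Remark \ref{Penelope}, which is an equally valid bookkeeping choice).
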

\begin{proof}
We can assume that $d < p^2-1$ by replacing $d$ with the remainder one gets when one divides it by $p^2-1$.  Then we can write $d=\frac{p+1}{2}+ b (p-1)$ for some $b$ with $0 < b \leq p$.
(We cannot have $b=0$, for then we would have $\gcd(d,p^2-1) \geq \gcd(d,p+1) = \frac{p+1}{2} > 1$.)

We shall use Lemma \ref{Stanley} to bound $V_{F,d}$.
Let $\wt=\wt_{p,2}$ be the $p$-ary weight function on $\Z/(p^2-1)\Z$ as defined in \eqref{Walter}.
We use the convention that if $z \in \Z$, then $\wt(z)$ is a shorthand for $\wt(\bar{z})$, where $\bar{z} \in \Z/(p^2-1)\Z$ is the reduction modulo $p^2-1$ of $z$.

If $b=1$, then $d=(3 p-1)/2$.  Then note that $-(p+4) d = \frac{-3 p^2 -11 p +4}{2} \equiv \frac{p-11}{2} \cdot p \pmod{p^2-1}$, so that
\[
\wt(p+4) + \wt(-d(p+4)) = 5+ \frac{p-11}{2} = \frac{p-1}{2},
\]
and so $V_{F,d} \leq 1/2$ by Lemma \ref{Stanley}

If $1 < b < (p-1)/2$, then $d=\frac{(2 b+1) p - (2 b-1)}{2}$.  Then note that $-(p+2) d = \frac{-(2 b+1)p^2-(2 b+3) p + (4 b-2)}{2} \equiv \frac{p^2-(2 b+3)p + (2 b-4)}{2} \equiv  b-2 + p \cdot \frac {p-2 b-3}{2} \pmod{p^2-1}$, so that
\[
\wt(p+2)+\wt(-d(p+2)) = 3 + b -2 + \frac{p-2 b-3}{2} = \frac{p-1}{2},
\]
and so $V_{F,d} \leq 1/2$ by Lemma \ref{Stanley}.

If $b=(p-1)/2$, then $d=1 + p \cdot\frac{p-1}{2}$, and so $p d =  p + p^2 \cdot\frac{p-1}{2} \equiv p+\frac{p-1}{2} \equiv \frac{3 p-1}{2} \pmod{p^2-1}$, and we have shown (in the $b=1$ case) that $V_{F, p d} \leq 1/2$, so then $V_{F,d} \leq 1/2$ by Remark \ref{Penelope}.

We cannot have $b=(p+1)/2$, for then $\gcd(d,p^2-1) \geq \gcd(d,p+1) \geq (p+1)/2 > 1$.

If $(p+1)/2 < b \leq p$, then $d=\frac{(2 b+1) p - (2 b-1)}{2}$, so $-d =\frac{-(2 b+1) p + (2 b-1)}{2} \equiv \frac{(2 p-2 b-1) p + (2 b-3)}{2} \equiv (p-b) p + \frac{2 b-(p+3)}{2} \pmod{p^2-1}$, so that
\[
\wt(1)+\wt(-d) = 1 + p -b + \frac{2 b-(p+3)}{2} = \frac{p-1}{2},
\]
and so $V_{F,d} \leq 1/2$ by Lemma \ref{Stanley}.
\end{proof}

\begin{lemma}\label{Nancy}
Let $p$ be odd and let $d=(q+1)/2$.
Then $d\equiv 1 \pmod{p-1}$ if and only if $n$ is even.
We have $\gcd(d,q-1)=1$ if and only if $q\equiv 1 \pmod{4}$, in which case $V_{F,d} = n/2$.
\end{lemma}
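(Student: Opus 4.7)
The plan is to dispatch the three claims in order, with only the valuation computation requiring Stickelberger (Lemma \ref{Stanley}); the two arithmetic claims reduce to direct manipulation of $d=(q+1)/2$.

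First I would show that $d\equiv 1\pmod{p-1}$ iff $n$ is even. Since $d-1=(q-1)/2=(p^n-1)/2$, this amounts to $2(p-1)\mid p^n-1$, i.e.\ $2\mid 1+p+\cdots+p^{n-1}$. As $p$ is odd, each summand is odd, and the sum of $n$ odd numbers is even iff $n$ is even. Next, for the gcd condition, I would use $2d=q+1=(q-1)+2$, which forces $\gcd(d,q-1)\mid 2$. Since $q-1$ is even, $\gcd(d,q-1)=1$ iff $d$ is odd, which in turn is equivalent to $q+1\not\equiv 0\pmod 4$, i.e.\ $q\equiv 1\pmod 4$.

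The substance is the valuation computation $V_{F,d}=n/2$ assuming $q\equiv 1\pmod 4$. I would apply Lemma \ref{Stanley} and analyze the quantity $\wt(a)+\wt(-da)$ for $a\in\Z/(q-1)\Z\smallsetminus\{0\}$. The key observation is that $2(d-1)=q-1$, so $d$ acts as the identity on even residues mod $q-1$ and as addition of $(q-1)/2$ on odd residues. Writing
\[
\tfrac{q-1}{2}=\tfrac{p-1}{2}(1+p+p^{2}+\cdots+p^{n-1}),
\]
one sees that the standard $p$-ary expansion of $(q-1)/2$ has every digit equal to $(p-1)/2$, so $\wt_{p,n}((q-1)/2)=n(p-1)/2$.

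For the lower bound $V_{F,d}\geq n/2$ I would split on the parity of $a$. If $a$ is even then $-da\equiv -a\pmod{q-1}$, so $\wt(a)+\wt(-da)=\wt(a)+\wt(-a)=n(p-1)$, which exceeds $n(p-1)/2$. If $a$ is odd then $-da\equiv (q-1)/2-a\pmod{q-1}$, so $a+(-da)\equiv (q-1)/2\pmod{q-1}$; subadditivity of $\wt_{p,n}$ then yields
\[
\wt_{p,n}(a)+\wt_{p,n}(-da)\geq \wt_{p,n}\bigl(\tfrac{q-1}{2}\bigr)=\tfrac{n(p-1)}{2}.
\]
For the matching upper bound, I would take $a=1$: then $-d\equiv (q-1)/2-1\pmod{q-1}$, and since the lowest $p$-ary digit of $(q-1)/2$ is $(p-1)/2\geq 1$ (as $p$ is odd), subtraction of $1$ proceeds with no borrow, lowering the weight by exactly $1$. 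Hence $\wt(1)+\wt(-d)=1+(n(p-1)/2-1)=n(p-1)/2$, so the minimum $m$ in Lemma \ref{Stanley} equals $n(p-1)/2$ and $V_{F,d}=n/2$.

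The only non-routine step is the subadditivity argument for odd $a$, which requires spotting the identity $a+(-da)\equiv (q-1)/2$; once this is seen, the rest is digit bookkeeping, and the choice $a=1$ is the obvious minimizer because $(q-1)/2$ has no zero digits, so any representation as a carry-free sum can only distribute weight without reducing it.
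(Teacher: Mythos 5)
Your proof is correct, and while it rests on the same foundation as the paper's (Lemma \ref{Stanley}, i.e.\ Stickelberger, plus the identity $d-1=(q-1)/2$), the core lower-bound argument is genuinely different. The paper picks $a$ minimizing $\wt(a)+\wt(-da)$ with $\wt(a)$ smallest among such minimizers, then runs a descent: if $\wt(a)\geq 3$ one may pass to $a'=a-p^j-p^k$ without increasing the objective (because $(p^j+p^k)(q-1)/2\equiv 0$), so the analysis reduces to the cases $\wt(a)\in\{1,2\}$, which are computed directly ($\wt(a)=2$ gives $-da=-a$ and total weight $n(p-1)$; $\wt(a)=1$ gives total weight $n(p-1)/2$). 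You instead split on the parity of $a$ modulo $q-1$: multiplication by $d$ fixes even residues (giving total weight $n(p-1)$) and translates odd residues by $(q-1)/2$, whence $a+(-da)\equiv(q-1)/2$ and subadditivity of $\wt_{p,n}$ immediately gives the bound $\wt(a)+\wt(-da)\geq\wt_{p,n}((q-1)/2)=n(p-1)/2$. Your route is shorter and avoids the slightly delicate double-minimization setup; the paper's descent technique, on the other hand, is the one that generalizes to the companion Lemmata \ref{Peter} and \ref{Paul}, where $d-1$ is a third rather than a half of $q-1$ and the parity dichotomy has no clean analogue. The upper bound via $a=1$ (no borrow since every digit of $(q-1)/2$ is $(p-1)/2\geq 1$) and the two arithmetic preliminaries match the paper's.
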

\begin{proof}
First of all, $d=\frac{(p-1)}{2} (1+p+\cdots+p^{n-1}) + 1$, so $d \equiv 1 + n \cdot \frac{p-1}{2}\equiv 1 \pmod{p-1}$ if and only if $2 \mid n$.

Secondly, $\gcd(d,q-1)=\gcd(d,q-1-2 d)=\gcd(d,-2)$ and $2 \mid d$ if and only if $4 \mid q+1$, that is, if and only if $q\equiv 3 \pmod{4}$.

We assume that $\gcd(d,q-1)=1$ henceforth, and determine $V_{F,d}$ via Lemma \ref{Stanley}.  
Let $\wt=\wt_{p,n}$ be the $p$-ary weight function on $\Z/(p^n-1)\Z$ as defined in \eqref{Walter}.
Suppose $a$ is chosen among the nonzero $x \in \Z/(p^n-1)\Z$ that minimize $\wt(x)+\wt(-d x)$, and among such $x$, make sure that $a$ is one with $\wt(a)$ minimal.  Then we claim that $\wt(a) \leq 2$ because otherwise there is some nonzero $a'=a-p^j-p^k$ such that $\wt(a')=\wt(a)-2$, and then note that since $p$ is odd and $d-1=(q-1)/2$, we have $-d a' = -d a +(p^j+p^k)(q-1)/2 + p^j + p^k = -d a + p^j + p^k$, so that $\wt(-d a') \leq \wt(-d a)+2$, and so $\wt(a')+\wt(-d a') \leq \wt(a)+\wt(-d a)$.

If $\wt(a)=2$, say $a=p^j+p^k$, then $-d a =-(p^j+p^k)(q-1)/2 - p^j - p^k=-p^j-p^k=-a$, so then $\wt(a)+\wt(-d a)=n(p-1)$.  But if $\wt(a)=1$, then $a=p^i$ for some $i$, so that $\wt(-d a)=\wt(-d)=n(p-1)/2-1$, and so $\wt(a)+\wt(-d a)=n(p-1)/2$.  So
\[
\mins{a \in \Z/(p^n-1)\Z \\ a \not=0} \wt_{p,n}(a)+\wt_{p,n}(-d a) = \frac{n(p-1)}{2},
\]
and so Lemma \ref{Stanley} shows that $V_{F,d}=n/2$.
\end{proof}
Finally, we prove more results used in the proofs of Propositions \ref{Angela} and \ref{Bernard}.
\begin{lemma}\label{Peter}
Let $p\equiv 1 \pmod{3}$ and $d=(q+2)/3$.
Then $d\equiv 1 \pmod{p-1}$ if and only if $3\mid n$.
We have $\gcd(d,q-1)=1$ if and only if $q \pmod{9} \in \{1,4\}$, in which case $V_{F,d} = n/3$.
\end{lemma}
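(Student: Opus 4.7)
The plan is to dispatch the three assertions in turn. Writing $d = 1 + (q-1)/3 = 1 + \frac{p-1}{3}(1 + p + \cdots + p^{n-1})$, the congruence $d \equiv 1 \pmod{p-1}$ reduces to asking whether $3$ divides $1 + p + \cdots + p^{n-1}$, and since $p \equiv 1 \pmod 3$ makes this sum congruent to $n$ modulo $3$, the first claim follows. For the coprimality claim, any common divisor of $d$ and $q-1$ divides $3d - (q-1) = 3$, so $\gcd(d,q-1) \in \{1,3\}$, with the value $3$ attained precisely when $3 \mid d$, equivalently $9 \mid q+2$, equivalently $q \equiv 7 \pmod 9$. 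Since $p \equiv 1 \pmod 3$ restricts $q \bmod 9$ to $\{1,4,7\}$, coprimality is equivalent to $q \bmod 9 \in \{1,4\}$.

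To prove $V_{F,d} = n/3$ I will apply Lemma \ref{Stanley}, aiming to show that $m := \min_{a\neq 0}\bigl(\wt_{p,n}(a) + \wt_{p,n}(-da)\bigr) = n(p-1)/3$. Set $w = (q-1)/3$ and $e = (p-1)/3$; since $p \geq 7$ we have $e \geq 2$ and $2e \leq p-1$, so the base-$p$ expansions of $w$ and $2w$ consist respectively of $n$ copies of $e$ and $n$ copies of $2e$, giving $\wt_{p,n}(w) = ne$ and $\wt_{p,n}(2w) = 2ne$. Moreover $d = 1 + w$, so $da \equiv a + rw \pmod{q-1}$ whenever $r = a \bmod 3$. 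For the upper bound $m \leq ne$, take $a = 2$: then $-2d \equiv w - 2 \pmod{q-1}$, whose base-$p$ expansion has zeroth digit $e-2 \geq 0$ and all other digits equal to $e$, so $\wt_{p,n}(-2d) = ne - 2$ and the sum equals $ne$.

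For the matching lower bound I will split by $r = a \bmod 3$. When $r = 0$, $-da \equiv -a$ and $\wt_{p,n}(a) + \wt_{p,n}(-a) = n(p-1)$. When $r \in \{1,2\}$ and $a + rw \not\equiv 0 \pmod{q-1}$, the identity $\wt_{p,n}(-x) = n(p-1) - \wt_{p,n}(x)$ combined with subadditivity $\wt_{p,n}(a + rw) \leq \wt_{p,n}(a) + rne$ gives $\wt_{p,n}(a) + \wt_{p,n}(-da) \geq (3-r)\,ne \geq ne$. The main obstacle, and the one place where the hypothesis $q \bmod 9 \in \{1,4\}$ is used decisively, is ruling out the exceptional equality $a + rw \equiv 0$: it would force $a \in \{2w,w\}$ and, combined with $a \equiv r \pmod 3$, would force $w \equiv 2 \pmod 3$, which is equivalent to $q \equiv 7 \pmod 9$ and therefore excluded. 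Combining the two bounds yields $m = ne = n(p-1)/3$, so $V_{F,d} = n/3$.
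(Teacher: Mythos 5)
Your proof is correct, and all three parts rest on the same foundations as the paper's (the same algebraic reductions for the first two claims, and Lemma \ref{Stanley} with the witness $a=2$ for the upper bound on $V_{F,d}$). Where you diverge is in the lower bound $\wt_{p,n}(a)+\wt_{p,n}(-da)\geq n(p-1)/3$: the paper runs a descent argument, choosing a minimizer $a$ of smallest weight and showing $\wt_{p,n}(a)\leq 3$ by repeatedly subtracting three powers of $p$ (which leaves $-da$ changed by at most weight $3$, using $d-1=(q-1)/3$ and $p\equiv 1\pmod 3$), then treating the cases $\wt_{p,n}(a)\in\{1,2,3\}$ one by one. You instead bound \emph{every} nonzero $a$ directly, splitting on $r=a\bmod 3$ and using $da\equiv a+r(q-1)/3$ together with subadditivity of the weight; this is uniform and avoids the extremal choice altogether, at the cost of having to dispose of the degenerate case $a+rw\equiv 0$. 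On that last point, note that your appeal to $q\not\equiv 7\pmod 9$ works but is slightly roundabout: $a+rw\equiv 0$ just says $da\equiv 0$, which is already impossible because $d$ is invertible modulo $q-1$ and $a\neq 0$. Both arguments are essentially the same computation in the $\wt_{p,n}(a)=2$ case (the paper's inequality $n(p-1)/3=\wt_{p,n}(p^j+p^k-da)\leq 2+\wt_{p,n}(-da)$ is exactly your subadditivity step), so the difference is organizational rather than conceptual, but your version is arguably the cleaner write-up.
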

\begin{proof}
First of all, $d=\frac{p-1}{3} (1+p+\cdots+p^{n-1}) + 1$, so $d \equiv 1+ n \cdot \frac{p-1}{3} \equiv 1 \pmod{p-1}$ if and only if $3\mid n$.

Secondly, $\gcd(d,q-1)=\gcd(d,q-1-3 d)=\gcd(d,-3)$ and $3 \mid d$ if and only if $9 \mid q+2$, that is, if and only if $q\equiv 7 \pmod{9}$.

We assume that $\gcd(d,q-1)=1$ henceforth, and determine $V_{F,d}$ via Lemma \ref{Stanley}.
Let $\wt=\wt_{p,n}$ be the $p$-ary weight function on $\Z/(p^n-1)\Z$ as defined in \eqref{Walter}.
Suppose $a$ is chosen among the nonzero $x \in \Z/(p^n-1)\Z$ that minimize $\wt(x)+\wt(-d x)$, and among such $x$, make sure that $a$ is one with $\wt(a)$ minimal.  Then we claim that $\wt(a) \leq 3$ because otherwise there is some nonzero $a'=a-p^j-p^k-p^\ell$ such that $\wt(a')=\wt(a)-3$, and then note that since $p\equiv 1 \pmod{3}$ and $d-1=(q-1)/3$, we have $-d a' = -d a +(p^j+p^k+p^\ell)(q-1)/3+ p^j + p^k + p^\ell = -d a + p^j + p^k + p^\ell$, so that $\wt(-d a') \leq \wt(-d a)+3$, and so $\wt(a')+\wt(-d a') \leq \wt(a)+\wt(-d a)$.  So $\wt(a)=1, 2$, or $3$.

If $\wt(a)=3$, say $a=p^j+p^k+p^\ell$, then $-d a=-(p^j+p^k+p^\ell)(q-1)/3-p^j-p^k-p^\ell=-p^j-p^k-p^\ell$ since $p\equiv 1 \pmod{3}$.  So $-d a = -a$, and so $\wt(a)+\wt(-d a)=n(p-1)$.

If $\wt(a)=1$, say $a=p^i$, then $\wt(-d a)=\wt(-d)=\wt(2(q-1)/3-1)=2 n (p-1)/3-1$, and so $\wt(a)+\wt(-d a)=2 n (p-1) /3$.

If $\wt(a)=2$, say $a=p^j+p^k$, then $-d a=-(p^j+p^k)(q-1)/3-p^j-p^k$ and since $p \equiv 1 \pmod{3}$, this means $p^j+p^k-d a =(q-1)/3$.  So $n(p-1)/3=\wt(p^j+p^k-d a) \leq 2 + \wt(-d a)=\wt(a)+\wt(-d a)$.  And in fact, if we just pick $a=2$, then $\wt(-d a)=\wt((q-1)/3-2)=n(p-1)/3-2$, so that $\wt(a)+\wt(-d a)=n(p-1)/3$.  So
\[
\mins{a \in \Z/(p^n-1)\Z \\ a \not=0} \wt_{p,n}(a)+\wt_{p,n}(-d a) = \frac{n(p-1)}{3},
\]
and so Lemma \ref{Stanley} shows that $V_{F,d}=n/3$.
\end{proof}

\begin{lemma}\label{Paul}
Let $p\equiv 1 \pmod{3}$ and $d=(2 q+1)/3$.
Then $d\equiv 1 \pmod{p-1}$ if and only if $3\mid n$.
We have $\gcd(d,q-1)=1$ if and only if $q\pmod{9} \in \{1,7\}$, in which case $V_{F,d} = n/3$.
\end{lemma}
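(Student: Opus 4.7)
The plan is to follow the proof of Lemma~\ref{Peter} with $d = (2q+1)/3$ in place of $(q+2)/3$, since both exponents satisfy $3(d-1) \equiv 0 \pmod{q-1}$ and the argument uses only $p \equiv 1 \pmod 3$.

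Writing $d = 1 + \tfrac{2(p-1)}{3}(1 + p + \cdots + p^{n-1})$, reduction modulo $p-1$ gives $d \equiv 1 + \tfrac{2(p-1)n}{3} \pmod{p-1}$, which equals $1$ if and only if $3 \mid n$. For coprimality, $3d = 2(q-1) + 3$ forces $\gcd(3d, q-1) = \gcd(3, q-1) = 3$, so $\gcd(d, q-1) \in \{1,3\}$; the value is $3$ exactly when $3 \mid d$, equivalently $9 \mid 2q+1$, equivalently $q \equiv 4 \pmod 9$. Combined with $q \equiv 1 \pmod 3$ (so $q \pmod 9 \in \{1,4,7\}$), coprimality is equivalent to $q \pmod 9 \in \{1,7\}$.

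For $V_{F,d}$, I would apply Lemma~\ref{Stanley} with $\wt = \wt_{p,n}$. From $d \equiv 1 + 2(q-1)/3 \pmod{q-1}$, one gets $-da \equiv -a - 2a(q-1)/3 \pmod{q-1}$; since $p \equiv 1 \pmod 3$ forces $a \equiv \wt(a) \pmod 3$, the residue of $2a(q-1)/3$ modulo $q-1$ equals $c(q-1)/3$, where $c \equiv 2\wt(a) \pmod 3$. Choose $a \neq 0$ minimizing $\wt(a)+\wt(-da)$, with $\wt(a)$ secondarily minimal. The usual subtract-three-units trick (set $a' = a - p^j - p^k - p^\ell$ at three positions where digits of $a$ are positive; since $p^j+p^k+p^\ell \equiv 0 \pmod 3$, the $\tfrac{2}{3}(q-1)$-factor vanishes modulo $q-1$, giving $-da' \equiv -da + p^j+p^k+p^\ell \pmod{q-1}$) forces $\wt(a) \le 3$.

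The three remaining cases produce the sums $n(p-1)/3$, $2n(p-1)/3$, and $n(p-1)$ for $\wt(a) = 1, 2, 3$: when $\wt(a) = 1$ with $a = p^i$, we have $c = 2$ and $-da \equiv (q-1)/3 - p^i$ has weight $n(p-1)/3 - 1$ (digits of $(q-1)/3$ are all $(p-1)/3 \ge 2$, so no borrow, since $p \ge 7$); when $\wt(a) = 2$, $c = 1$ and $-da \equiv 2(q-1)/3 - a$ has weight $2n(p-1)/3 - 2$; when $\wt(a) = 3$, $c = 0$ and $-da \equiv -a$ has weight $n(p-1) - 3$. The minimum is $n(p-1)/3$, realized at $\wt(a) = 1$, so Lemma~\ref{Stanley} yields $V_{F,d} = n/3$. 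I expect no serious obstacle; the proof is a mirror image of Lemma~\ref{Peter}'s, with the residue classes $c = 1$ and $c = 2$ swapping roles (reflecting $d-1 \equiv 2(q-1)/3$ here versus $(q-1)/3$ there), and the minimum now attained at weight $1$ rather than weight $2$.
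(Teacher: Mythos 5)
Your proposal is correct and follows essentially the same route as the paper's proof: the same reduction of $d$ modulo $p-1$, the same gcd manipulation, and the same use of Lemma~\ref{Stanley} with the subtract-three-powers argument forcing $\wt(a)\leq 3$, followed by the case analysis in which the minimum $n(p-1)/3$ is attained at $\wt(a)=1$. The only cosmetic difference is that in the $\wt(a)=2$ case you compute the weight exactly (checking there are no borrows) where the paper settles for the subadditivity lower bound $2n(p-1)/3$, which suffices either way.
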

\begin{proof}
First of all, $d=\frac{2(p-1)}{3} (1+p+\cdots+p^{n-1}) + 1$, so $d \equiv 1 + 2 n \cdot \frac{p-1}{3} \equiv 1 \pmod{p-1}$ if and only if $3 \mid n$.

Secondly, since $d$ is odd we have $\gcd(d,q-1)=\gcd(d,2 q-2)=\gcd(d,2 q-2-3 d)=\gcd(d,-3)$ and $3 \mid d$ if and only if $9 \mid 2 q+1$, that is, if and only if $q\equiv 4\pmod{9}$.

We assume that $\gcd(d,q-1)=1$ henceforth, and determine $V_{F,d}$ via Lemma \ref{Stanley}.
Let $\wt=\wt_{p,n}$ be the $p$-ary weight function on $\Z/(p^n-1)\Z$ as defined in \eqref{Walter}.
Suppose $a$ is chosen among the nonzero $x \in \Z/(p^n-1)\Z$ that minimize $\wt(x)+\wt(-d x)$, and among such $x$, make sure that $a$ is one with $\wt(a)$ minimal.  Then we claim that $\wt(a) \leq 3$ because otherwise there is some nonzero $a'=a-p^j-p^k-p^\ell$ such that $\wt(a')=\wt(a)-3$, and then note that since $p\equiv 1 \pmod{3}$ and $d-1=2(q-1)/3$, we have $-d a' = -d a +2(p^j+p^k+p^\ell)(q-1)/3+ p^j + p^k + p^\ell = -d a + p^j + p^k + p^\ell$, so that $\wt(-d a') \leq \wt(-d a)+3$, and so $\wt(a')+\wt(-d a') \leq \wt(a)+\wt(-d a)$.  So $\wt(a)=1, 2$, or $3$.

If $\wt(a)=3$, say $a=p^j+p^k+p^\ell$, then $-d a=-2(p^j+p^k+p^\ell)(q-1)/3-p^j-p^k-p^\ell=-p^j-p^k-p^\ell$ since $p\equiv 1 \pmod{3}$.  So $-d a = -a$, and so $\wt(a)+\wt(-d a)=n(p-1)$.

If $\wt(a)=2$, say $a=p^j+p^k$, then $-d a=-2(p^j+p^k)(q-1)/3-p^j-p^k$ and since $p \equiv 1 \pmod{3}$, this means $p^j+p^k-d a =2(q-1)/3$.  So $2 n(p-1)/3=\wt(p^j+p^k-d a) \leq 2 + \wt(-d a)=\wt(a)+\wt(-d a)$. 

If $\wt(a)=1$, say $a=p^i$, then $\wt(-d a)=\wt(-d)=\wt((q-1)/3-1)=n (p-1)/3-1$, and so $\wt(a)+\wt(-d a)=n (p-1) /3$.  So
\[
\mins{a \in \Z/(p^n-1)\Z \\ a \not=0} \wt_{p,n}(a)+\wt_{p,n}(-d a) = \frac{n(p-1)}{3},
\]
and so Lemma \ref{Stanley} shows that $V_{F,d}=n/3$.
\end{proof}

\begin{lemma}\label{Raphael}
Let $p$ be odd and $p\equiv 2 \pmod{3}$.
Then $3\not\equiv 1 \pmod{p-1}$.
We have $\gcd(3,q-1)=1$ if and only if $q\equiv 2 \pmod{3}$, in which case the multiplicative inverse of $3$ modulo $q-1$ is $d=(2 q-1)/3$ and $V_{F,3}=V_{F,d}=n \cdot \frac{p+1}{3(p-1)}$, and $d \not\equiv 1 \pmod{p-1}$.
\end{lemma}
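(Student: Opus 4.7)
My plan is to dispatch the congruence claims first and then compute $V_{F,3}$ via Stickelberger's theorem. Since $p$ is odd with $p\equiv 2\pmod 3$, we have $p\ge 5$ and therefore $p-1\ge 4$, so $3\not\equiv 1\pmod{p-1}$. Because $p\ne 3$, the condition $\gcd(3,q-1)=1$ is equivalent to $q\not\equiv 1\pmod 3$, i.e.\ $q\equiv 2\pmod 3$. Assuming this, $d=(2q-1)/3$ is a positive integer with $3d=2q-1\equiv 1\pmod{q-1}$, so $d$ is the inverse of $3$ modulo $q-1$. Remark~\ref{Ignatius} then yields $V_{F,3}=V_{F,d}$, and reducing $3d\equiv 1$ modulo $p-1$ combined with $3\not\equiv 1\pmod{p-1}$ forces $d\not\equiv 1\pmod{p-1}$.

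It remains to show $V_{F,3}=n(p+1)/(3(p-1))$, which by Lemma~\ref{Stanley} amounts to proving
\[
m := \mins{a \in \Z/(p^n-1)\Z \\ a\ne 0} \wt_{p,n}(a)+\wt_{p,n}(-3a) = \frac{n(p+1)}{3}.
\]
For the upper bound, the test element $a^\ast=\frac{p-2}{3}\cdot\frac{p^n-1}{p-1}$ has every base-$p$ digit equal to $(p-2)/3$; since $3\cdot(p-2)/3=p-2<p$, tripling produces no carries, so every digit of $3a^\ast$ equals $p-2$. Then $\wt_{p,n}(3a^\ast)=n(p-2)$ and $\wt_{p,n}(-3a^\ast)=n(p-1)-n(p-2)=n$, giving a sum of exactly $n(p+1)/3$.

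The core of the argument is the matching lower bound. I would use the cyclic carry representation: for nonzero $a$ with base-$p$ digits $(a_i)_{i=0}^{n-1}$, one finds $(c_i)_{i=0}^{n-1}\in\{0,1,2\}^n$ with $c_n=c_0$ such that $3a_i+c_i-p c_{i+1}$ is the $i$th base-$p$ digit of $3a\bmod(p^n-1)$. Summing yields $\wt_{p,n}(3a)=3\wt_{p,n}(a)-(p-1)C$ with $C:=\sum_i c_i$, and since $3a\ne 0$ we also have $\wt_{p,n}(-3a)=n(p-1)-\wt_{p,n}(3a)$. Substituting, the required bound $\wt_{p,n}(a)+\wt_{p,n}(-3a)\ge n(p+1)/3$ becomes
\[
\wt_{p,n}(a)\le \frac{n(p-2)}{3}+\frac{(p-1)C}{2}.
\]

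To finish, I would use the digit inequality $3a_i+c_i-p c_{i+1}\le p-1$ together with $c_i\ge 0$ to extract the per-position bound $a_i\le\lfloor(p c_{i+1}+p-1)/3\rfloor$, which, writing $p=3k+2$, evaluates to $k$, $2k+1$, or $3k+1$ according as $c_{i+1}=0,1,2$. Grouping the positions by the value of $c_{i+1}$ into counts $n_0,n_1,n_2$ with $n_0+n_1+n_2=n$ and $C=n_1+2n_2$, elementary algebra reduces the desired inequality to $n_1(k-1)/2+k n_2\ge 0$, which holds for all $k\ge 1$ (i.e.\ $p\ge 5$). The main obstacle is precisely this last step: a bound on $\wt_{p,n}(a)$ depending only on $n$ and $C$ is too weak, and the argument succeeds only because the outgoing-carry-dependent per-digit bounds combine just tightly enough to match the extremal value attained by $a^\ast$.
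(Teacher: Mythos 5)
Your proof is correct, and its first half (the congruence claims, the identification of $d=(2q-1)/3$ as the inverse of $3$, the appeal to Remark \ref{Ignatius}, and the upper-bound witness $a^\ast=\frac{p-2}{3}\cdot\frac{q-1}{p-1}$ with all digits $(p-2)/3$) coincides with the paper's. The lower bound, however, is a genuinely different argument. The paper picks $a$ to be a minimizer of $\wt_{p,n}(x)+\wt_{p,n}(-3x)$ with $\wt_{p,n}(a)$ smallest among minimizers and runs a local perturbation: if some digit satisfies $a_i\ge (p+1)/3$, replacing $a$ by $a'=a-p^i(p+1)/3$ strictly decreases $\wt_{p,n}(a)$ while not increasing $\wt_{p,n}(a)+\wt_{p,n}(-3a)$ (since $-3a'=-3a+p^{i+1}+p^i$ raises the second weight by at most $2\le (p+1)/3$), contradicting the choice of $a$ unless $a'=0$, a case disposed of separately. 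Hence every digit of an optimal $a$ is at most $(p-2)/3$, tripling produces no carries, and $\wt_{p,n}(a)+\wt_{p,n}(-3a)=n(p-1)-2\wt_{p,n}(a)\ge n(p+1)/3$. You instead bound the quantity for \emph{every} nonzero $a$ via the cyclic-carry identity $\wt_{p,n}(3a)=3\wt_{p,n}(a)-(p-1)C$ and per-digit bounds conditioned on the outgoing carry; I checked your arithmetic (with $p=3k+2$ the bounds $a_i\le k,\,2k+1,\,3k+1$ for $c_{i+1}=0,1,2$ reduce the claim to $\frac{k-1}{2}n_1+kn_2\ge 0$, valid for $p\ge 5$) and it closes. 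The paper's route is shorter and needs no carry formalism, which you assert without proof --- it is standard for multiplication modulo $p^n-1$, but the existence of a consistent cyclic carry vector and the fact that the resulting digit string is the \emph{standard} expansion (impossible to be all $p-1$ since $3a\not\equiv 0$) deserve a sentence. Your route buys a uniform bound over all $a$ rather than only over a carefully chosen minimizer, and the identity $\wt_{p,n}(da)=d\,\wt_{p,n}(a)-(p-1)C$ adapts readily to other small multipliers.
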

\begin{proof}
First of all, since $p$ is odd and congruent to $2$ modulo $3$, we have $p\geq 5$, so $3\not\equiv 1 \pmod{p-1}$.

Secondly, it is clear that $\gcd(3,q-1)=1$ if and only if $q\not\equiv 1 \pmod{3}$, and since $q$ is a power of $p$ (which is not $3$), this is true if and only if $q\equiv 2 \pmod{3}$.

We assume that $\gcd(3,q-1)=1$ henceforth and set $d=(2 q-1)/3$.
Then $3 d = 2 q-1 \equiv 1 \pmod{q-1}$, so $d$ is the multiplicative inverse of $3$ modulo $q-1$.  So $3 d \equiv 1 \pmod{p-1}$ and since $3\not\equiv 1 \pmod{p-1}$, this means that $d \not\equiv 1 \pmod{p-1}$.

Since $3$ and $d$ are inverses of each other modulo $q-1$, Remark \ref{Ignatius} tells us that $V_{F,3}=V_{F,d}$, so it remains to show that $V_{F,3}=n (p+1)/3$, which we now do using Lemma \ref{Stanley}.
Let $\wt=\wt_{p,n}$ be the $p$-ary weight function on $\Z/(p^n-1)\Z$ as defined in \eqref{Walter}.
Suppose $a$ is chosen among the nonzero $x \in \Z/(q-1)\Z$ that minimize $\wt(x)+\wt(-3 x)$, and among such $x$, make sure that $a$ is one with $\wt(a)$ minimal.
Write $a=a_0+a_1 p + \cdots +a_{n-1} p^{n-1}$ with $0 \leq a_i < p$ for each $i$, and at least one $a_i$ is nonzero, and at least one $a_i$ is not $p-1$.
If $a_i \geq (p+1)/3$ for some $i$, let $a'=a-p^i(p+1)/3$, so that $\wt(a')=\wt(a)-(p+1)/3$.
Note that $-3 a' = -3 a +(p+1) p^i$, so that $\wt(-3 a') \leq \wt(-3 a) + 2$, and thus $\wt(a')+\wt(-3 a') \leq \wt(a)+\wt(-3 a)-(p+1)/3+2 \leq \wt(a)+\wt(-3 a)$, and since $\wt(a') < \wt(a)$, this would contradict our choice of $a$ unless $a'=0$.
And if $a'=0$, then $a=p^i(p+1)/3$, so then $-3 a=-p^i(p+1)=-p^{i+1}-p^i$ and so $\wt(a)+\wt(-3 a)=(p+1)/3+n(p-1)-2 \geq n(p-1)$, and this would contradict the choice of $a$ since $\wt(1)+\wt(-3)=1+n(p-1)-3 < n(p-1)$.

So we must have $a_i \leq (p-2)/3$ for every $i$, and thus $3 a_i \leq p-2$ for every $i$.
So $3 a = 3 a_0 + 3 a_1 p + \cdots + 3 a_{n-1} p^{n-1}$ has $\wt(3 a)=3\wt(a)$, and so $\wt(a)+\wt(-3 a)=\wt(a)+n(p-1)-\wt(3 a)=n(p-1)-2 \wt(a)$.  And since $a_i \leq (p-2)/3$ for every $i$, we have $\wt(a)\leq n(p-2)/3$, and so $\wt(a)+\wt(-3 a) \geq  n (p+1)/3$, with equality if we let every $a_i=(p-2)/3$, that is, let $a=(p-2)(q-1)/(3(p-1))$.  So
\[
\mins{a \in \Z/(p^n-1)\Z \\ a \not=0} \wt_{p,n}(a)+\wt_{p,n}(-3 a) = \frac{n(p+1)}{3},
\]
and so Lemma \ref{Stanley} shows that $V_{F,3}=n(p+1)/(3(p-1))$.
\end{proof}

\section{Open Problems}\label{Roland}

Theorem \ref{Dorothy} gives us a universal bound $V_{F,d} \leq (2/3) [F\colon\Fp]$ when $d$ is nondegenerate over $F$.
When $[F\colon\Fp]$ is a multiple of $3$, Lemma \ref{Gerald} says that this universal bound is always attained for some $d$.
Remark \ref{Michael} furnishes a stronger bound of $V_{F,d} \leq (1/2) [F\colon\Fp]$ when $[F\colon\Fp]=2^s$ with $s \geq 1$ and $d$ is nondegenerate over $F$, and for each such $F$, this stronger bound is always attained for some $d$ by Lemma \ref{Nora} (except when $F=\F_4$, over which there is no nondegenerate $d$).
When $F$ is a prime field $\Fp$ and $d$ is nondegenerate over $F$ (which requires $p \geq 5$), Theorem \ref{Natasha} gives upper bounds of $V_{F,d} \leq 1/2$ when $p\equiv 1\pmod{4}$, and $V_{F,d} \leq \ceil{(p-1)/3}/(p-1)$ when $p\equiv 3 \pmod{4}$, and Lemmata \ref{Nancy}--\ref{Raphael} show that these bounds are always met for some $d$ in every such $F$.

So it remains to determine precisely how high $V_{F,d}$ can be when $[F\colon\Fp]$ is neither a power of $2$ nor a multiple of $3$.
If $\ell$ is the least odd prime divisor of $[F\colon\Fp]$, then Lemma \ref{Gerald} shows that there is some $d$ such that $V_{F,d}=\frac{\ell+1}{2\ell} [F\colon\Fp]$.  We are unaware of any pair $(F,d)$ where this value is exceeded.
Thus we make the following conjecture that the value of $V_{F,d}$ observed in Lemma \ref{Gerald} is in fact the upper bound.
\begin{conjecture}[Upper Bound Conjecture]\label{Simon}
Suppose that $[F:\Fp]$ is not a power of $2$, and let $\ell$ be the least odd prime divisor of $[F:\Fp]$.
Let $d$ be a positive integer with $\gcd(d,q-1)=1$ that is not degenerate over $F$.
Then $V_{F,d} \leq \frac{\ell+1}{2\ell} [F\colon\Fp]$.
\end{conjecture}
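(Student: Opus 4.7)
The plan is to reduce the conjecture to a narrower claim about single odd-prime extensions via subfield induction, and then attack that claim with techniques tailored to the extremal Kasami-type exponents of Lemma \ref{Gerald}.

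When $\ell = 3$ the conjectured bound $\frac{\ell+1}{2\ell} = \frac{2}{3}$ coincides with the universal bound of Theorem \ref{Dorothy}, so that case serves as the induction base. Assume $\ell \geq 5$, write $n = [F\colon\Fp] = \ell m$, and let $K$ be the unique subfield of $F$ with $[K\colon\Fp] = m$, so $[F\colon K] = \ell$. Lemma \ref{Lisa} yields $V_{F,d} \leq \ell \cdot V_{K,d}$, and I would induct on $n$ while splitting on whether $d$ is degenerate over $K$.

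If $d$ is nondegenerate over $K$, the induction closes cleanly. When $m$ is a power of $2$ (including $m=1$, handled by Theorem \ref{Natasha}), Remark \ref{Michael} gives $V_{K,d} \leq m/2$, whence $V_{F,d} \leq n/2 \leq \frac{\ell+1}{2\ell} n$. Otherwise, the smallest odd prime divisor $\ell'$ of $m$ satisfies $\ell' \geq \ell$; the inductive hypothesis supplies $V_{K,d} \leq \frac{\ell'+1}{2\ell'} m$, and since $\frac{x+1}{2x}$ is decreasing in $x$ we get $V_{F,d} \leq \frac{\ell'+1}{2\ell'} n \leq \frac{\ell+1}{2\ell} n$.

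The entire difficulty therefore reduces to the case where $d$ is degenerate over $K$ but nondegenerate over $F$; here Lemma \ref{Lisa} gives only the useless bound $V_{F,d} \leq n$. This is precisely the odd-prime analogue of the setting treated by Theorem \ref{Deidre} via \cite[Corollary 4.4]{Aubry-Katz-Langevin-2015} for quadratic extensions, and the Kasami-type exponents of Lemma \ref{Gerald} (which are themselves degenerate over the index-$\ell$ subfield) show that the bound $\frac{\ell+1}{2\ell} n$ is attained exactly here. The remaining target is the \emph{sub-conjecture}: if $d$ is nondegenerate over $F$ but degenerate over a subfield $K$ of odd prime index $\ell$, then $V_{F,d} \leq \frac{\ell+1}{2\ell}[F\colon\Fp]$. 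By Remark \ref{Penelope} one may normalize $d \equiv 1 \pmod{p^m-1}$, reducing the claim to a statement about how $d$ couples with the partition of base-$p$ digits into $\ell$ blocks of length $m$. I would pursue this either through Lemma \ref{Stanley}, hunting for a nonzero $a$ whose digit pattern mirrors the Kasami examples and forces $\wt_{p,n}(a)+\wt_{p,n}(-da) \leq \frac{\ell+1}{2\ell}\, n(p-1)$, or via an $(\ell+1)$-fold generalization of the power-moment method of Theorem \ref{Dorothy}, analyzing $\sum_{a\in F}\mywalsh{u_1}{a}\cdots\mywalsh{u_{\ell+1}}{a}$ and locating a non-permutation obstruction to the system $\sum_i x_i = 0$, $\sum_i u_i x_i^d = 0$ over $F$.

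The main obstacle will be this sub-conjecture. The Theorem \ref{Dorothy} argument succeeded for $\ell = 3$ because the algebraic obstruction collapsed onto a single low-degree polynomial $(x+1)^e - x^e - 1$; for $\ell \geq 5$ the analog appears to require finer control over coincidences among $d$-th-power sums, and no Aubry--Katz--Langevin-type structural theorem for odd-prime extensions is presently in the literature. Accordingly I expect most of the work to consist either of a delicate Stickelberger weight computation in the normalization $d \equiv 1 \pmod{p^m-1}$, or of a non-trivial combinatorial analysis of the zero sets of the polynomial systems arising from higher power moments.
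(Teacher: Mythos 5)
You are attempting Conjecture \ref{Simon}, which the paper does not prove: it is posed as an open problem in Section \ref{Roland}, supported only by the computer verification reported there (all fields of order less than $10^{13}$) and by the $\ell=3$ case, where the bound coincides with Theorem \ref{Dorothy}. Your proposal is likewise not a proof, and to your credit you say so explicitly. The inductive reduction you set up is sound as far as it goes: when $d$ is nondegenerate over the index-$\ell$ subfield $K$, combining Lemma \ref{Lisa} with Remark \ref{Michael} (when $[K\colon\Fp]$ is a power of $2$, including the prime-field case via Theorem \ref{Natasha}) or with the inductive hypothesis (using that the least odd prime divisor $\ell'$ of $[K\colon\Fp]$ satisfies $\ell'\geq\ell$ and that $(x+1)/(2x)$ is decreasing) does close the induction. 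In fact the reduction can be pushed further than you take it: running the same argument over \emph{every} maximal proper subfield, and then invoking \cite[Corollary 4.4]{Aubry-Katz-Langevin-2015} as in the proof of Theorem \ref{Deidre} whenever the minimal field of nondegeneracy has even degree, one reduces the conjecture to the case where $[F\colon\Fp]$ is odd and $d$ is degenerate over every proper subfield of $F$.

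That residual case, however, is exactly the open content of the conjecture, and neither the paper nor your proposal resolves it. There Lemma \ref{Lisa} collapses to the trivial bound $V_{F,d}\leq[F\colon\Fp]$; the only exact result for exponents that become degenerate one step down is the quadratic-extension theorem of \cite{Aubry-Katz-Langevin-2015}, with no known analogue for odd prime index; and the power-moment argument of Theorem \ref{Dorothy} is calibrated to produce exactly $2/3$, not $(\ell+1)/(2\ell)$ for $\ell\geq 5$. Your two suggested lines of attack --- a Stickelberger digit analysis after normalizing $d\equiv 1\pmod{p^m-1}$, and an $(\ell+1)$-fold power moment --- are reasonable directions but are left entirely unexecuted, and the second faces the concrete obstacle that with six or more factors the moment no longer reduces to counting preimages of a single univariate polynomial, which is the collapse that made the cubic moment work. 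So your submission should be read as a correct reduction of the conjecture to its known hard core, consistent with the evidence assembled in Lemma \ref{Gerald} and Section \ref{Roland}, but not as a proof.
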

This bound coincides with the universal bound $V_{F,d} \leq (2/3) [F\colon\Fp]$ when $\ell=3$, but is often stronger when $\ell > 3$.
Computer checks have verified Conjecture \ref{Simon} for all fields $F$ of order less than $10^{13}$.

Again consider our universal bound $V_{F,d} \leq (2/3) [F\colon\Fp]$ for $d$ nondegenerate over $F$.
It is interesting that the proof (in Section \ref{Victor}) does not use Stickelberger's Theorem (which underlies Lemma \ref{Stanley}).  Attempts to prove the universal bound directly with Stickelberger's Theorem lead to an interesting conjecture in elementary number theory that, if true, would provide an alternative proof for the universal bound.  To state the conjecture, recall that if $t$ is an integer with $t \geq 2$ and $n$ is a positive integer, then we define the standard $t$-ary expansion of an $a \in \Z/(t^n-1)\Z$ to be the expression
\[
a=a_0 t^0 + a_1 t^1 + \cdots + a_{n-1} t^{n-1},
\]
where the powers of $t$ are elements of $\Z/(t^n-1)\Z$ and $a_0,\ldots,a_{n-1}$ are elements of $\Z$ with $0 \leq a_i < t$ for every $i$, and where we insist that $a_0=\cdots=a_n=0$ when $a=0$ (to make the $a_i$'s uniquely defined).
If $b \in \Z/(t^n-1)\Z$ has standard $t$-ary expansion $b=b_0+b_1 t+\cdots + b_{n-1} t^{n-1}$, then we say that {\it $b$ covers $a$} and write $a \preceq b$ to indicate that $a_i \leq b_i$ for every $i$.  If $a\preceq b$ and $a\not=b$, we say that {\it $b$ strictly covers $a$} and write $a \prec b$.
\begin{conjecture}[Covering Conjecture]\label{Ilse}
Let $t$ be an integer with $t\geq 2$ and let $n$ and $d$ be positive integers such that $d$ modulo $t^n-1$ is neither zero nor a power of $t$.  Then there exist nonzero $a, b \in \Z/(t^n-1)\Z$ such that $a \prec b$ and $d b \prec d a$.
\end{conjecture}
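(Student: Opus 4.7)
The plan is to attack the Covering Conjecture via the cyclic base-$t$ arithmetic of $\Z/(t^n-1)\Z$. First I would reformulate the problem: writing $c = b - a$ as an integer (a digit-wise, carry-free subtraction, since $a \preceq b$ forces $a_i + c_i \leq t-1$ at every position), the covering hypotheses become the requirement of nonzero $a, c$ with $a_i + c_i \leq t-1$ for all $i$ such that the cyclic sum $da + dc \pmod{t^n-1}$ is digit-wise $\leq da$ with strict inequality somewhere. Since cyclic rotation of digits (multiplication by $t^j$) is an order-automorphism of the covering poset, one may fix one position freely. This reduces the conjecture to a purely combinatorial statement about digit evolution under cyclic base-$t$ addition.

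The main constructive strategy I would pursue is the simplest candidate $c = t^i$ with $a_i = 0$, so that $db = da + dt^i$, where $dt^i$ is the cyclic rotation of $d$'s digit sequence by $i$. Adding a positive quantity lowers digits only through carries, and in the cyclic setting a carry that propagates all the way around from position $n-1$ back to position $0$ effectively subtracts $t^n-1$ from the naive integer sum, systematically lowering many digits. The goal is to pick $a$ so that $da$ exhibits a run of $t-1$'s positioned exactly where the nonzero entries of some shift $dt^i$ will strike, forcing a full carry cycle. Since $d$ is nondegenerate we have $\wt_{t,n}(d) \geq 2$, so the rotations $dt^0, \ldots, dt^{n-1}$ supply enough variety that, for a suitable $a$ and index $i$, the required alignment can be realized.

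The main obstacle, I expect, will be the case in which $d$ is extremely sparse (for instance $\wt_{t,n}(d) = 2$ with both nonzero digits equal to $1$), because the necessary carry cascades are then fragile and the shape of $da$ is tightly constrained; such $d$ would demand a delicate separate analysis. A further complication is that when $\gcd(d, t^n-1) > 1$ the map $a \mapsto da$ is not surjective, so not every desired pattern of $da$ is attainable and one must work inside the image subgroup. As a fallback, I would explore an indirect route: exploiting the symmetry of the conjecture under $d \mapsto d^{-1}$ (when $d$ is a unit) and under $a \mapsto -a$ (via $\wt_{t,n}(-x) = n(t-1) - \wt_{t,n}(x)$), one attempts to show that failure of the conjecture forces the bijection $a \mapsto da$ to be an order-automorphism of the digit poset $\{0,1,\ldots,t-1\}^n \setminus \{(t-1,\ldots,t-1)\}$. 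Automorphisms of this poset are exactly coordinate permutations, and a direct calculation shows that only cyclic shifts among them are realized by ring multiplications; this would force $d$ to be a power of $t$, contradicting nondegeneracy.
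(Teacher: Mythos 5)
The statement you are trying to prove is labelled a \emph{conjecture} in the paper, and the paper does not prove it: it only establishes special cases (Lemma \ref{Vincent} when $1<\gcd(d,t^n-1)<t^n-1$, Lemma \ref{Nina} when $d\not\equiv 1\pmod{t-1}$, and the case $n\leq 4$, via the lifting and inverse-digit Lemmata \ref{Leonard} and \ref{Colin}), together with computer verification for $t^n<3\cdot 10^9$. So any complete argument here would be new; your proposal, as written, is a strategy outline rather than a proof, and its two key steps both have genuine gaps.

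First, the constructive route ($c=t^i$, so $db=da+dt^i$, hoping a full carry cycle lowers every digit) leaves the decisive claim --- ``for a suitable $a$ and index $i$, the required alignment can be realized'' --- entirely unjustified, and this is exactly where the difficulty lives. Even when $d$ is a unit, so that you may prescribe the digits of $da$ freely and recover $a=d^{-1}(da)$, you then lose control over which positions $i$ satisfy $a_i\leq t-2$; but the admissible $i$ determines which rotation $dt^i$ is added, so the alignment of the run of $(t-1)$'s in $da$ and the choice of $i$ are coupled constraints that you have not shown to be simultaneously satisfiable (the sparse case $\wt_{t,n}(d)=2$, which you flag yourself, is precisely where this coupling is tightest). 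When $d$ is not a unit the image of $x\mapsto dx$ is a proper subgroup and the digit patterns of $da$ cannot be prescribed at all; the paper must treat that case by a completely different argument (Lemma \ref{Vincent}). Second, the fallback is a non sequitur: failure of the conjecture says only that for every covering pair $a\prec b$ there is \emph{some} digit position at which $db$ exceeds $da$; it does not give $da\preceq db$, so it does not make $x\mapsto dx$ order-preserving, and the reduction to classifying automorphisms of the digit poset does not go through. Your observations that $\wt_{t,n}(d)\geq 2$ for nondegenerate $d$, that rotation by $t^j$ preserves $\prec$, and that the problem is symmetric under $d\mapsto d^{-1}$ are all correct and consistent with the paper's partial results, but neither of your two routes closes the conjecture.
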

To see that this conjecture would provide an alternative proof of our universal bound (Theorem \ref{Dorothy}), let $d$ be a positive integer coprime to $p^n-1$ and nondegenerate over $F$.  Then Conjecture \ref{Ilse} would show that there are nonzero $a, b \in \Z/(p^n-1)\Z$ such that $a \prec b$ and $d b \prec d a$.  Let $\wt=\wt_{p,n}$ be the $p$-ary weight function for $\Z/(p^n-1)\Z$ as defined in \eqref{Walter}.  Now we shall use Stickelberger's Theorem via Lemma \ref{Stanley}: since $a$, $-b$, and $b-a$ are all nonzero elements of $\Z/(p^n-1)\Z$, we will obtain our universal bound $V_{F,d} \leq 2 n/3$ if we can prove that at least one of $\alpha=\wt(-d a)+\wt(a)$, $\beta=\wt(d b)+\wt(-b)$ or $\gamma=\wt(d a-d b)+\wt(b-a)$ is less than or equal to $(2/3)n(p-1)$.  Since $b$ covers $a$, when we add $a$ and $b-a$ to obtain $b$, there are no carries (in base $p$ representation), so $\wt(a)+\wt(b-a)=\wt(b)$.  And thus $\wt(a)+\wt(b-a)+\wt(-b)=\wt(b)+\wt(-b)=n(p-1)$.  Similarly, we have $\wt(d b)+\wt(d a - d b)+\wt(-d a)=n(p-1)$.
Thus $\alpha+\beta+\gamma=2 n(p-1)$ and so at least one of the three summands is less than or equal to $(2/3)n(p-1)$.

We have considerable evidence for the truth of Conjecture \ref{Ilse}.
To see this, we first provide some observations and partial proofs.
The first observation shows that one only needs to check the conjecture for bases $t$ that are not powers of smaller integers.
\begin{remark}
If Conjecture \ref{Ilse} is true when $t=t_1$ and $n=n_1$, then it is also true when $t=t_1^k$ and $n=n_1/k$ for any positive divisor $k$ of $n_1$.  For if $x \prec y$ when $x$ and $y$ are expressed in standard $t_1$-ary expansions, then $x \prec y$ when they are expressed in standard $(t_1^k)$-ary expansions.
\end{remark}
The second observation is that Conjecture \ref{Ilse} is trivial when $d$ is not coprime to $t^n-1$.
\begin{lemma}\label{Vincent}
Let $t$, $n$, and $d$ be positive integers with $t\geq 2$ and $1 < \gcd(d,t^n-1) < t^n-1$.  Then there exist nonzero $a, b \in \Z/(t^n-1)\Z$ such that $a \prec b$ and $d b \prec d a$.
\end{lemma}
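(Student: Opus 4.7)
The approach exploits the fact that when $g := \gcd(d,t^n-1)$ satisfies $1 < g < t^n-1$, multiplication by $d$ on $\Z/(t^n-1)\Z$ has a nontrivial kernel, namely the multiples of $M := (t^n-1)/g$. The plan is to take $b = M$ so that $db \equiv 0$, and then locate a nonzero $a\prec M$ which is not a multiple of $M$, so that $da\not\equiv 0$. The standard $t$-ary expansion of $0$ is the all-zero sequence, which is strictly covered by any nonzero expansion, so $db \prec da$ will follow automatically. The bulk of the work will therefore be in producing the element $a$ from the digit structure of $M$.

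Concretely, I would first record that $M \geq 2$ (since $g \leq t^n-2$) and that $dM = (d/g)(t^n-1) \equiv 0 \pmod{t^n-1}$, so the choice $b=M$ is legitimate and yields $db=0$. Next I would write out the standard $t$-ary expansion $M = m_0 + m_1 t + \cdots + m_{n-1} t^{n-1}$ and split into two cases. If at least two of the $m_j$ are nonzero, I pick any nonzero $m_j$ and set $a = M - m_j t^j$; its standard expansion agrees with that of $M$ except with $m_j$ replaced by $0$, so $a\prec M$, and $0<a<M$ forces $a$ not to be a positive multiple of $M$, hence $da\not\equiv 0$. If on the other hand $M$ has a single nonzero digit $m_j t^j$, then $m_j t^j g = t^n - 1$; since $t\mid t^j$ but $t\nmid t^n-1$, we are forced to $j=0$, so $M = m_0$ is a single digit with $m_0\geq 2$. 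In this case the choice $a=1$ has expansion $1\cdot t^0$, clearly $\prec M$, and $1$ is not a multiple of $M\geq 2$.

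The main obstacle, such as it is, is the single-digit subcase: one must notice that the arithmetic constraint $t\nmid t^n-1$ forces such an $M$ to live entirely in position $0$, which then leaves enough room (because $m_0\geq 2$) to slip $a=1$ underneath it. Once this is handled, everything else is bookkeeping: in both cases one has nonzero $a\prec b=M$ with $da\not\equiv 0$ and $db\equiv 0$, so $db\prec da$ follows at once, completing the proof.
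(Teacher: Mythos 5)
Your proof is correct and follows essentially the same route as the paper: both take $b=(t^n-1)/\gcd(d,t^n-1)$ so that $db=0$, then exhibit a nonzero $a\prec b$ with $da\not\equiv 0$ (the paper simply picks $a=t^k$ for a nonzero digit position $k$ of $b$, after noting $b$ cannot be a power of $t$, while you drop a digit or use $a=1$). The difference is cosmetic; no gaps.
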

\begin{proof}
Let $e=(t^n-1)/\gcd(d,t^n-1)$, so that $1 < e=\gcd(e,t^n-1) < t^n-1$ and $d e \equiv 0 \pmod{t^n-1}$.
Let $\bar{e} \in \Z/(t^n-1)\Z$ be the reduction of $e$ modulo $t^n-1$.
This $e$ is neither zero nor a power of $t$ modulo $t^n-1$, so there exists some $k$ such that $t^k \prec \bar{e}$.  And $d t^k$ is a nonzero element of $\Z/(t^n-1)\Z$ because $\gcd(d t^k,t^n-1)=\gcd(d,t^n-1) < t^n-1$.  Thus $d \bar{e} = 0 \prec d t^k$.
\end{proof}
There is a useful principle for lifting instances of covering to higher moduli.
\begin{lemma}\label{Leonard}
Let $t$, $m$, $n$, and $d$ be positive integers with $t \geq 2$ and $m \mid n$.  Suppose that there are nonzero $a, b \in \Z/(t^m-1)\Z$ such that $a \prec b$ and $d b \prec d a$ in $\Z/(t^m-1)\Z$.  Then there are nonzero $A, B \in \Z/(t^n-1)\Z$ such that $A \prec B$ and $d B \prec d A$ in $\Z/(t^n-1)\Z$.
\end{lemma}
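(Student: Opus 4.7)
The plan is to lift the pair $(a,b)$ from $\Z/(t^m-1)\Z$ to $\Z/(t^n-1)\Z$ via the natural ``digit repetition'' map. Write $n=km$ and set
\[
\rho = 1 + t^m + t^{2m} + \cdots + t^{(k-1)m} = \frac{t^n-1}{t^m-1}.
\]
Identifying $a,b$ with their integer representatives in $\{1,2,\ldots,t^m-2\}$, define $A = a\rho \bmod{(t^n-1)}$ and $B = b\rho \bmod{(t^n-1)}$. Since $0 < a\rho, b\rho < t^n-1$, the elements $A$ and $B$ are nonzero in $\Z/(t^n-1)\Z$.

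The main observation I would establish is that the map $x \mapsto x\rho$ sends the standard $t$-ary expansion of an element of $\Z/(t^m-1)\Z$ to the $k$-fold repetition of its digits in $\Z/(t^n-1)\Z$. Concretely, if $x = x_0 + x_1 t + \cdots + x_{m-1} t^{m-1}$ with $0 \leq x_i < t$, then
\[
x\rho = \sum_{j=0}^{k-1} \sum_{i=0}^{m-1} x_i \, t^{i+jm},
\]
and the exponents $i+jm$ range over $\{0,1,\ldots,n-1\}$ with no overlaps, so the right-hand side is already the standard $t$-ary expansion in $\Z/(t^n-1)\Z$. Applying this to $x=a$ and $x=b$ immediately yields $A \prec B$ from $a \prec b$, since coordinatewise domination of digit strings is preserved under repetition.

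To handle the $d$-multiplied side, I would use the identity $d(x\rho) \equiv (dx \bmod (t^m-1))\cdot\rho \pmod{t^n-1}$, which follows from $\rho\cdot(t^m-1) = t^n-1$. Thus the digit string of $dA$ in $\Z/(t^n-1)\Z$ is exactly the $k$-fold repetition of the digit string of $da \in \Z/(t^m-1)\Z$, and likewise for $dB$ and $db$. The hypothesis $db \prec da$ then lifts to $dB \prec dA$ by the same coordinatewise argument, completing the proof.

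There is really no hard step here; the only thing to be careful about is verifying that the repeated digit string produced by $x\rho$ is genuinely the \emph{standard} $t$-ary expansion, i.e.\ that no carries or overflows are introduced. This is immediate because the blocks occupy disjoint positions $\{jm, jm+1,\ldots,jm+m-1\}$ for $j=0,\ldots,k-1$, and each digit $x_i$ already lies in $\{0,1,\ldots,t-1\}$.
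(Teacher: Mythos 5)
Your proposal is correct and follows essentially the same route as the paper: multiply by $g=\rho=(t^n-1)/(t^m-1)$ so that the $t$-ary expansion of $x\rho$ is the $(n/m)$-fold repetition of that of $x$, which preserves the covering relations on both the plain and the $d$-multiplied sides. You simply spell out the digit-repetition and reduction-modulo-$t^m-1$ verifications that the paper states without proof.
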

\begin{proof}
Let $g$ be the integer $(t^n-1)/(t^m-1)$ and let $A, B$ be the unique elements of $\Z/(t^n-1)\Z$ given by $A=g a$ and $B=g b$.  (The fact that $a$ and $b$ are well defined modulo $t^m-1$ makes $g a$ and $g b$ well defined modulo $t^n-1$.)  Then the $t$-ary expansion of $A$ in $\Z/(t^n-1)\Z$ is just the $(n/m)$-fold repetition of the $t$-ary expansion of $a$ in $\Z/(t^m-1)\Z$, and similarly with $B$ relative to $b$, $d A$ relative to $d a$, and $d B$ relative to $d b$.  So $A \prec B$ and $d B \prec d A$.
\end{proof}
When $d$ is invertible modulo $t^n-1$, the conclusion of Conjecture \ref{Ilse} can often be deduced from direct examination of the standard $t$-ary expansions of $d \pmod{t^n-1}$ and its multiplicative inverse.
\begin{lemma}\label{Colin}
Let $t$ be an integer with $t \geq 2$ and let $n$ and $d$ be positive integers with with $\gcd(d,t^n-1)=1$ and $d$ not a power of $t$ modulo $t^n-1$.  Let $\bar{d} \in \Z/(t^n-1)\Z$ be the reduction of $d$ modulo $t^n-1$, and let $\bar{e}$ be the multiplicative inverse of $\bar{d}$.  Suppose that $\bar{d}$ and $\bar{e}$ have standard $t$-ary expansions $\bar{d}=d_0+ d_1 t+\cdots+d_{n-1} t^{n-1}$ and $\bar{e}=e_0+e_1 t + \cdots + e_{n-1} t^{n-1}$, respectively, and there exist some $j,k \in \Z/n\Z$ with $j+k\equiv 0 \pmod{n}$ such that $d_j\not=0$ and $e_k \not=0$.  Then $t^k \prec \bar{e}$ and $d \bar{e} \prec d t^k$.
\end{lemma}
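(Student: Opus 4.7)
The plan is to read off both claimed coverings directly from the standard $t$-ary expansions involved, and to use the hypothesis that $\bar{d}$ (and hence $\bar{e}$, being its multiplicative inverse) is not a power of $t$ modulo $t^n-1$ in order to promote each $\preceq$ to a strict covering $\prec$.

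For the first covering, $t^k \prec \bar{e}$: the standard $t$-ary expansion of $t^k$ in $\Z/(t^n-1)\Z$ has a single $1$ at position $k$ and zeros elsewhere. Since $e_k \neq 0$ forces $e_k \geq 1$, I get $t^k \preceq \bar{e}$ digit by digit. Strictness follows because $\bar{e} = t^k$ would force $\bar{d} \equiv t^{n-k} \pmod{t^n-1}$ to be a power of $t$, contradicting the hypothesis.

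For the second covering, $d\bar{e} \prec d t^k$: since $\bar{e}$ is the multiplicative inverse of $\bar{d}$ modulo $t^n-1$, we have $d\bar{e} \equiv 1 \pmod{t^n-1}$, whose standard $t$-ary expansion has a single $1$ at position $0$. The expansion of $d t^k \equiv \bar{d} t^k \pmod{t^n-1}$ is the cyclic shift of the expansion of $\bar{d}$: the digit at position $i$ equals $d_{(i-k) \bmod n}$. In particular, the digit at position $0$ is $d_{-k \bmod n}$, and the hypothesis $j+k \equiv 0 \pmod n$ identifies this with $d_j$, which is nonzero. Hence $d\bar{e} \preceq d t^k$, and the same power-of-$t$ argument excludes equality (since $d\bar{e} = d t^k$ would again force $\bar{d}$ to be a power of $t$).

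The content is essentially definitional, so I do not expect a genuine obstacle; the only items requiring care are tracking the direction of the cyclic shift induced by multiplication by $t^k$ (which is precisely why the hypothesis couples $j$ and $k$ via $j+k \equiv 0 \pmod n$) and invoking the non-degeneracy of $\bar{d}$ at the two places where equality in $\preceq$ must be ruled out.
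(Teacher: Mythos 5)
Your proof is correct and follows essentially the same route as the paper's: read $t^k \preceq \bar{e}$ off the digit $e_k \neq 0$, read $d\bar{e} = 1 \preceq d t^k$ off the cyclic shift placing the nonzero digit $d_j$ in position $0$, and rule out equality in both cases by noting it would force $\bar{d}$ to be a power of $t$. No substantive difference from the paper's argument.
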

\begin{proof}
It is clear that $t^k \preceq \bar{e}$ because $e_k\not=0$.  And in fact $\bar{e}\not=t^k$, because then its inverse would be $t^{n-k}$, but we were given that $d$ is not a power of $t$ modulo $t^n-1$.  So $t^k \prec\bar{e}$.

The $t$-ary digits of $d t^k$ are obtained by cyclically shifting those of $\bar{d}$, so that the $j$th digit of $\bar{d}$ (which is nonzero) becomes the $0$th digit of $d t^k$.  Thus $d\bar{e} = 1 \preceq d t^k$.  And $1\not=d t^k$ because that would make $\bar{d}=t^{n-k}$, and $d$ is not a power of $t$ modulo $t^n-1$.
\end{proof}
These principles allow us to prove that Conjecture \ref{Ilse} becomes true if we add $d\not\equiv 1\pmod{t-1}$ as an hypothesis.
\begin{lemma}\label{Nina}
Let $t$, $n$, and $d$ be positive integers with $t \geq 2$, $d\not\equiv 0 \pmod{t^n-1}$, and $d\not\equiv 1 \pmod{t-1}$.  Then there exist nonzero $a,b \in \Z/(t^n-1)\Z$ such that $a \prec b$ and $d b \prec d a$.
\end{lemma}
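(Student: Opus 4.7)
The plan is to split into two cases by the value of $\gcd(d, t^n-1)$. Since $d \not\equiv 0 \pmod{t^n - 1}$ forces this gcd to be strictly less than $t^n - 1$, if $\gcd(d, t^n - 1) > 1$ then Lemma \ref{Vincent} immediately produces the desired pair. The remainder of the argument therefore proceeds under the assumption $\gcd(d, t^n - 1) = 1$.

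Under this coprimality I will reduce the problem modulo $t - 1$. Setting $s = d \bmod (t-1)$, the divisibility $(t-1) \mid (t^n - 1)$ forces $\gcd(s, t-1) = 1$, while the hypothesis $d \not\equiv 1 \pmod{t-1}$ gives $s \neq 1$ in $\Z/(t-1)\Z$. The case $t = 2$ is precluded by hypothesis, and for $t = 3$ the hypothesis $d \not\equiv 1 \pmod 2$ forces $d$ even, hence $\gcd(d, t^n-1) \geq 2$, so $t = 3$ does not occur in the coprime branch. Hence I may assume $t \geq 4$. The multiplication-by-$s$ map is a bijection of $\Z/(t-1)\Z$ fixing $0$, so it restricts to a bijection of $\{1, 2, \ldots, t-2\}$; because $s \neq 1$ this restricted bijection is not the identity, and because a non-decreasing bijection of a finite linearly ordered set is the identity, I can pick $r < r'$ in $\{1, \ldots, t-2\}$ with $sr \bmod (t-1) > sr' \bmod (t-1)$.

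The explicit construction uses $u = (t^n - 1)/(t-1) = 1 + t + \cdots + t^{n-1}$, whose standard $t$-ary expansion is $(1, 1, \ldots, 1)$. Since $(t-1) u \equiv 0 \pmod{t^n - 1}$, the class of $cu$ in $\Z/(t^n-1)\Z$ depends only on $c \bmod (t-1)$, and multiplication by $d$ carries $cu$ to $(sc \bmod (t-1))\, u$. Taking $a = r u$ and $b = r' u$, the standard $t$-ary expansions are the constant strings $(r, \ldots, r)$ and $(r', \ldots, r')$, while $da$ and $db$ reduce to the constant strings with digit $sr \bmod (t-1)$ and $sr' \bmod (t-1)$ respectively; all four of these digits lie in $\{1, \ldots, t-2\}$ because $\gcd(s, t-1) = 1$ prevents $sr$ or $sr'$ from vanishing modulo $t-1$. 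The two strict inequalities $r < r'$ and $sr \bmod (t-1) > sr' \bmod (t-1)$ then translate directly into $a \prec b$ and $db \prec da$.

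I expect no deep obstacle: the key insight is recognizing that the cyclic subgroup generated by $u$ is invariant under multiplication by $d$ and carries that action to multiplication by $s$ modulo $t-1$; this converts the question to the elementary one of whether the permutation $x \mapsto sx$ of $\{1, \ldots, t-2\}$ admits a strictly decreasing pair, which the hypothesis $d \not\equiv 1 \pmod{t-1}$ guarantees. All other steps --- the $\gcd > 1$ reduction, the handling of small $t$, and the digit bookkeeping for the constant-digit elements $ru$ --- are routine.
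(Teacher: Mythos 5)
Your proof is correct. Both you and the paper begin identically: dispose of the case $\gcd(d,t^n-1)>1$ via Lemma \ref{Vincent}, and then exploit the fact that the remaining hypothesis $d\not\equiv 1\pmod{t-1}$ lives entirely in $\Z/(t-1)\Z$. Where you diverge is in how that reduction is executed. The paper observes that for $n=1$ the hypotheses of Lemma \ref{Colin} hold (taking $j=k=0$, with $d_0,e_0\neq 0$ because $d$ and its inverse are units $\not\equiv 0,1$ modulo $t-1$), which produces the pair $(a,b)=(1,\bar e)$ in $\Z/(t-1)\Z$, and then invokes Lemma \ref{Leonard} to lift to general $n$. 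You instead give a self-contained construction: the repunit $u=(t^n-1)/(t-1)$ generates a $d$-invariant subgroup on which multiplication by $d$ acts as multiplication by $s=d\bmod(t-1)$, the constant-digit elements $cu$ make covering coincide with the ordering of $\{1,\dots,t-2\}$, and a non-identity bijection of a finite chain must invert some pair $r<r'$. Your pair $(ru,r'u)$ is in effect the Lemma \ref{Leonard} lift of a pair in $\Z/(t-1)\Z$, but chosen by the inversion argument rather than as $(1,\bar e)$; what your version buys is independence from Lemmata \ref{Colin} and \ref{Leonard} and a slightly more transparent picture of why the hypothesis $d\not\equiv 1\pmod{t-1}$ is exactly what is needed, at the cost of redoing by hand the bookkeeping those lemmata package. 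Your handling of the degenerate cases $t=2$ (vacuous hypothesis) and $t=3$ (forced into the Lemma \ref{Vincent} branch) is also correct and is a detail the paper leaves implicit.
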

\begin{proof}
We may assume $\gcd(d,t^n-1) = 1$ because otherwise Lemma \ref{Vincent} guarantees our result.
Then the hypotheses of Lemma \ref{Colin} are clearly satisfied when $n=1$, thus establishing our result in that case.  Then the cases with $n > 1$ can be deduced from the $n=1$ case along with Lemma \ref{Leonard}.
\end{proof}
And we also can prove Conjecture \ref{Ilse} when $n \leq 4$.
\begin{lemma}
Let $t$ and $n$ be integers with $t \geq 2$ and $n \leq 4$, and let $d$ be a positive integer such that $d$ is neither zero nor a power of $t$ modulo $t^n-1$.  Then there exist nonzero $a,b \in \Z/(t^n-1)\Z$ such that $a \prec b$ and $d b \prec d a$.
\end{lemma}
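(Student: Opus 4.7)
My plan is to dispatch the four values of $n$ one by one, using the reductive lemmas already in hand. By Lemma \ref{Vincent} I may assume $\gcd(d, t^n-1) = 1$, and by Lemma \ref{Nina} I may further assume $d \equiv 1 \pmod{t-1}$; these reductions settle $n = 1$ at once, since then $d \equiv t^0 \pmod{t-1}$ contradicts the hypothesis. The key structural observation used throughout is that under these reductions both $d$ and its multiplicative inverse $e$ modulo $t^n - 1$ have at least two nonzero $t$-ary digits: for if $d = d_i t^i$ had a single nonzero digit, then $d_i \equiv 1 \pmod{t-1}$ with $1 \leq d_i \leq t-1$ would force $d_i = 1$ and $d = t^i$, excluded by hypothesis.

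For $n \in \{2, 3\}$, I would apply Lemma \ref{Colin} directly, verifying that the required pair $(j, k) \in S_d \times S_e$ with $j+k \equiv 0 \pmod{n}$ exists, where $S_d, S_e \subseteq \Z/n\Z$ are the supports of nonzero digits.  For $n = 2$ this is automatic; for $n = 3$, a short enumeration of the possible supports (each of cardinality at least $2$) shows that any failure would force $|S_d|$ or $|S_e|$ below $2$.

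For $n = 4$, I would first lift from the $n = 2$ case via Lemma \ref{Leonard}, which succeeds whenever $d \bmod (t^2-1)$ is not a power of $t$ in $\Z/(t^2-1)\Z$ (the residue $0$ is ruled out by coprimality). In the residual case $d \equiv 1$ or $t \pmod{t^2-1}$, I would apply Lemma \ref{Colin}; failure of its hypothesis forces $|S_d| = |S_e| = 2$, and then $(S_d, S_e)$ lies in one of six configurations. Under the simultaneous cyclic shift $(d, e) \mapsto (td, t^{-1}e)$, which preserves $de \equiv 1$ and the covering relation, these collapse into two orbits, represented by $(\{0,2\}, \{1,3\})$ and $(\{0,1\}, \{1,2\})$.

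The main technical work is ruling out these two representatives. For the first (``parity'') case, the reductions $d \equiv e \equiv 1 \pmod{t-1}$ pin the digit sums at $d_0 + d_2 = t$ and $e_1 + e_3 = t$ (the alternative value $2t-1$ exceeds the available range); reducing $de \equiv 1$ modulo $t+1$ then gives $de \equiv -(d_0+d_2)(e_1+e_3) \equiv -t^2 \equiv -1 \pmod{t+1}$, forcing $t+1 \mid 2$, which is absurd for $t \geq 2$. The second (``mixed'') case is the main obstacle: my plan is to reduce $de \equiv 1$ modulo $t^2+1$ to obtain an integer equation $(1-s)t + (2p-s-2) = m(t^2+1)$ in $s = d_0 + e_1$, $p = d_0 e_1$, and an integer quotient $m$. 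The bounds on the left-hand side force $|m|$ into a small range, and for each such $m$ the resulting quadratic $x^2 - sx + p = 0$ (having $d_0$ as root) has discriminant of the form $(s - (t+1))^2 - (\text{positive quantity of order } t^2)$, which cannot be nonnegative under the digit bound $|s - (t+1)| \leq t - 1$. This last discriminant computation is the most delicate step, and I expect each value of $m$ to require its own tracking of lower-order terms.
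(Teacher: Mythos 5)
Your overall architecture matches the paper's: reduce via Lemmata \ref{Vincent} and \ref{Nina}, note that $d$ and its inverse $e$ each have at least two nonzero digits, apply Lemma \ref{Colin} directly for $n\leq 3$, and lift from $n=2$ via Lemma \ref{Leonard} when $d\bmod(t^2-1)$ is not a power of $t$. The divergence is in the residual $n=4$ case, and there your plan has a genuine gap in the ``mixed'' orbit. With $d=d_0+d_1t$, $e=e_1t+e_2t^2$, $d_0+d_1=e_1+e_2=t$, $s=d_0+e_1$, $p=d_0e_1$, reduction of $de\equiv 1$ modulo $t^2+1$ gives $(1-s)t+(2p-s-1)=m(t^2+1)$ with $m\in\{-1,0,1\}$ (your constant $-2$ should be $-1$, but that is immaterial). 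For $m=0$ and $m=1$ the discriminant of $x^2-sx+p$ is $(s-(t+1))^2-(t^2+3)$ and $(s-(t+1))^2-(3t^2+5)$ respectively, both negative as you predict. But for $m=-1$ it is $(s-(t+1))^2+t^2-1$, which is \emph{positive}: the claimed sign contradiction evaporates, and to finish you would instead have to show that the two roots cannot both be digits in $[1,t-1]$ (true, but it requires a factorization analysis of a difference of squares equal to $t^2-1$, not a discriminant sign). Moreover, the entire mixed case is vacuous given where you are standing: in that configuration $d\bmod(t^2-1)$ equals $d_i+d_jt$ with two nonzero digits summing to $t$, hence lies in $[2t-1,\,t^2-t+1]$ and is neither $1$ nor $t$ modulo $t^2-1$ (the endpoint $t^2-1$ is excluded by $\gcd(d,t^4-1)=1$), contradicting the residual hypothesis $d\equiv 1$ or $t\pmod{t^2-1}$ that defines the case. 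So the step that fails is also a step you never needed to take.

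By contrast, your elimination of the ``parity'' orbit $(\{0,2\},\{1,3\})$ --- reduce $de\equiv 1$ modulo $t+1$ to get $1\equiv -t^2\equiv -1$, forcing $t+1\mid 2$ --- is correct and is a clean alternative to the paper's treatment. The paper avoids all configuration-by-configuration elimination: having first disposed of the subcase where a digit sum exceeds $t$ (which forces three nonzero digits and feeds Lemma \ref{Colin}), it uses $d\equiv t^k$ and $e\equiv t^{-k}\equiv t^k\pmod{t^2-1}$ to conclude $\{d_3+d_1,d_2+d_0\}=\{0,t\}=\{e_3+e_1,e_2+e_0\}$ with the \emph{same} half equal to $t$ for both $d$ and $e$; that alignment hands Lemma \ref{Colin} its hypothesis directly. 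If you repair your $n=4$ endgame by discarding the mixed orbit via the residual hypothesis and keeping your mod-$(t+1)$ argument for the parity orbit, you obtain a correct proof of essentially the same length as the paper's.
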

\begin{proof}
We may assume that $\gcd(d,t^n-1)=1$, because otherwise Lemma \ref{Vincent} gives us our result immediately.
Without loss of generality we assume that $d < t^n-1$ by replacing it with its remainder upon division by $t^n-1$, and write $d=d_0+\cdots+d_{n-1} t^{n-1}$ with $0 \leq d_i < t$ for each $d_i$.
In view of Lemma \ref{Nina}, we may assume that $d \equiv 1 \pmod{t-1}$, and so $d_0+\cdots+d_n \equiv 1 \pmod{t-1}$.  Since $d$ is not a power of $t$ modulo $t^n-1$, this means that $d_0+\cdots+d_n \geq t$.  Thus at least two of the $d_i$'s are nonzero.
And if we let $e$ be the integer with $0 \leq e < t^n-1$ such that $d e \equiv 1 \pmod{t^n-1}$, and write $e=e_0+\cdots+e_{n-1} t^{n-1}$ with $0 \leq e_i < t$ for each $e_i$, then $e$ is not a power of $t$ modulo $t^n-1$ and $e \equiv 1\pmod{t-1}$, so that at least two of the $e_i$'s are nonzero.
If $n \leq 3$, this means that the hypotheses of Lemma \ref{Colin} are satisfied, and so our conclusion follows.

So we may assume $n=4$ henceforth.
If $d_0+d_1+d_2+d_3 > t$, then the fact that $d_0+d_1+d_2+d_4 \equiv 1 \pmod{t-1}$ forces $d_0+d_1+d_2+d_3 \geq 2 t-1$, which makes at least three of the $d_i$'s nonzero, and since we already know that at least two of the $e_i$'s are nonzero, the hypotheses of Lemma \ref{Colin} are satisfied, and so our conclusion follows.
Similarly, if $e_0+e_1+e_2+e_3 > t$, our conclusion will follow from Lemma \ref{Colin}.
So we may assume $d_0+d_1+d_2+d_3=e_0+e_1+e_2+e_3=t$ henceforth.

We cannot have $d \equiv 0 \pmod{t^2-1}$, for we are assuming that $\gcd(d,t^4-1)=1$, which would force $t^2-1=1$, which is absurd.
If $d$ is not a power of $t$ modulo $t^2-1$, then by the $n=2$ case of this lemma (which has already been established), we have some $a, b \in \Z/(t^2-1)\Z$ such that $a \prec b$ and $d b \prec d a$ in $\Z/(t^2-1)\Z$.  Then Lemma \ref{Leonard} furnishes $A, B \in \Z/(t^4-1)\Z$ with $A \prec B$ and $d B \prec d A$ in $\Z/(t^4-1)\Z$, and we are done.

If $d$ is a power of $t$ modulo $t^2-1$, say $d \equiv t^k \pmod{t^2-1}$, then $e \equiv t^k \pmod{t^2-1}$ also.
Since $d \equiv (d_3+d_1) t + (d_2+d_0) \pmod{t^2-1}$ is a power of $t$ and $d_0+d_1+d_2+d_3=t$, we know that $(d_3+d_1) t + (d_2+d_0)$ is not the standard $t$-ary expansion of $d$ modulo $t^2-1$.  So we must have $\{d_3+d_1,d_2+d_0\}=\{0,t\}$, and similarly $\{e_3+e_1,e_2+e_0\}=\{0,t\}$.  Furthermore, since $d \equiv e \pmod{t^2-1}$, we either have $d_3+d_1=e_3+e_1=t$ or $d_2+d_0=e_2+e_0=t$.  Since all $d_i$'s and $e_i$'s are less than $t$, we know that either $d_3,d_1,e_3,e_1$ are all nonzero or else $d_2,d_0,e_2,e_0$ are all nonzero, and so the hypotheses of Lemma \ref{Colin} are satisfied, and our conclusion follows.
\end{proof}
In addition to these partial proofs, computer checks also verify Conjecture \ref{Ilse} for all $t^n$ less than $3\cdot 10^9$.

\section*{Acknowledgements}

The first author thanks Pascal V\'eron and Alicia Weng for stimulating discussions on some of the topics in this paper.  The authors thank an anonymous reviewer for helpful comments and corrections that improved the paper.

\end{document}